\newtheorem{theorem}{Theorem}[section]
\newtheorem{lemma}[theorem]{Lemma}
\newtheorem{definition}[theorem]{Definition}
\newtheorem{proposition}[theorem]{Proposition}
\newtheorem{corollary}[theorem]{Corollary}
\newtheorem{problem}{Problem}
\theoremstyle{definition}
\newtheorem{example}[theorem]{Example}
\newtheorem{remark}[theorem]{Remark}
\newcommand{\G}{\Gamma}
\newcommand{\La}{\Lambda}
\newcommand{\la}{\lambda}
\newcommand{\Gab}{G^{\textrm{Ab}}}
\newcommand{\Ab}{\textrm{Ab}}
\newcommand{\Kn}{\textrm{Kn}}
\newcommand{\Comm}{\text{Comm}_b}
\newcommand{\x}{\times}
\DeclareMathOperator{\Tup}{Tup}
\DeclareMathOperator{\KnV}{KnV}
\DeclareMathOperator{\Cr}{Cr}
\DeclareMathOperator{\Bg}{B\Gamma}
\newcommand{\Rg}{\Z^{C_\G}}
\newcommand{\bbe}{\vec e}
\newcommand{\bb}{\vec b}
\newcommand{\bx}{\vec x}
\newcommand{\by}{\vec y}
\newcommand{\bv}{\vec v}
\newcommand{\bprod}{\; \square \;}
\title{Graph Powers of Groups II: The RA Matrix}
\author{Gabe Cunningham, Igor Minevich}
\affil{Wentworth Institute of Technology}
\date{\today}
\begin{document}

\maketitle

\begin{abstract}
For a graph $\G$ and group $G$, $G^\G$ is the subgroup of $G^{|\G|}$ generated by elements with $g$ in the coordinates corresponding to $v$ and its neighbors in $\G$. There is a natural epimorphism $G^\G \to (G/[G,G])^\G$ with kernel $[G,G]^n \cap G^\G$. When $[G,G]^n \leq G^\G$, the structure of $G^\G$ is easily described from $(G/[G,G])^\G$. Fixing $\G$, if $[G,G]^{|\G|} \leq G^\G$ for all $G$, we say that $\G$ is RA (reducible to abelian). We showed in \cite{FirstPaper} that wide classes of graphs are RA, including graphs of girth 5 or more. The key tool is the \emph{RA matrix} $C_{\G}$, and we showed that $\G$ is RA if and only if the row space $\Row(C_\G) = \Z^{|\G|}$.

Here, we study the possibilities for the elementary divisors of $C_\G$; the more nontrivial elementary divisors we get, the further $\G$ is from being RA (and the harder $G^\G$ is to describe). We show that while many graphs, including those of girth 4, cartesian products, and most tensor products have at most one nontrivial elementary divisor, one can construct a graph of girth 3 with any prescribed set of elementary divisors and $\Z$-nullity.
\end{abstract}

\section{Introduction}

Given a graph $\G$ and a group $G$, the graph power $G^\G$ is the subgroup of $G^{|\G|}$ generated by elements $g^v$ which have $g$ in coordinates corresponding to $v$ \textit{and} vertices adjacent to $v$ in $\G$, and the identity element $1$ in all other coordinates. This is a natural generalization of the parallel product of group actions (see \cite{parallel-product}) and a vast generalization of the original Lights Out Puzzle by Tiger Electronics, where the group $G$ was $\Z/2\Z$ and the graph was fixed as the $5 \times 5$ grid graph. The short exact sequence

\[1 \to [G, G]^{|\G|} \cap G^\G \to G^\G \to (G/[G, G])^{\G} \to 1\]

shows us that discovering the structure of $G^\G$ really amounts to two things: (1) understanding the subgroup $[G, G]^{|\G|} \cap G^\G$ and (2) doing some linear algebra over $\Z$ and $\Z/k\Z$ to compute the structure of $(\Gab)^\G$ for many graphs $\G$, as done by many authors \cite{FirstPaper, electrician, LOFiniteGraphs,  Eriksson2001note,  Fleischer2013survey, Giffen2009generalizing, Kreh2017lights, MUETZE20072755, Sutner2000sigma, Torrence} in exploring the Lights Out Puzzle over an abelian group (and primarily over $\Z/2\Z$, where it can be shown the all-ones vector is always in $(\Z/2\Z)^{\G}$ for any $\G$ \cite{Sutner1989linear}).

In \cite{FirstPaper} we defined $\G$ to be \emph{$G$-RA} (``reducible to abelian'') if $[G, G]^{|\G|} \sub G^\G$, that is, if, by a series of clicks $g^v$ we can place any commutator $[g, h] \in G$ on any single vertex, leaving the rest of the vertices' states unchanged. If $\G$ is $G$-RA for all groups $G$, we simply say $\G$ is RA. If $\G$ is $G$-RA, understanding the structure of $G^\G$ reduces to the abelian question of understanding $(G/[G, G])^{\G}$, which has already been widely investigated. In \cite{FirstPaper} we illustrated the importance of considering the chain of subgroups
\[\Comm(G, \G) \le [G^\G, G^\G] \le [G, G]^{|\G|} \cap G^{\G} \le [G, G]^{|\G|}\]
where $\Comm(G, \G)$ is generated by commutators $[g^v, h^w]$ for $g,h \in G$ and vertices $v, w$ in $\G$ (possibly $v = w$).

For many graphs, we have proven even the smallest subgroup $\Comm(G, \G)$ gives all of $[G, G]^{|\G|}$ for every $G$, making these RA, namely:
\begin{enumerate}
    \item Any graph with girth 4 or more either (a) with a vertex of degree $\le 2$ or (b) that has edges $u - v - w$ that do not complete to a 4-cycle $u - v - w - x - u$. This includes:
    \begin{enumerate}
        \item $m \times n$ grid graphs with $m, n \ge 2$ and path graphs $P_n$ with $n \ge 3$, 
        \item Cylindrical grids (the cartesian product of a cycle $C_m$ with a path $P_n$ with $n$ vertices) with $m \geq 4$ and $n \geq 3$, and
        \item Torus grids (the cartesian product of $C_m$ with $C_n$) with $m,n \ge 4$. 
        \item All graphs with girth 5 or more other than $K_2$.
    \end{enumerate}
    \item Complete bipartite graphs $K_{m, n}$.
    \item Cube graphs $Q_d$ with $d$ even.
\end{enumerate}

On the other hand, in \cite{FirstPaper} we have shown that cube graphs $Q_d$ with $d$ odd are not RA, and neither are the folded cubes $\square_d$ with $d$ odd. In \cref{eg:CylTori}, we expand on \#1 above and show that for any product of path and cycle graphs (so any higher-dimensional version of grids, cylinders, and tori) the only examples that are not RA are the cubes $Q_d$ with odd $d$. 

The main tool used to tell us all this is what we call the ``RA Matrix'' $C_\G$ for a graph $\G$. We start by building the so-called ``activation matrix'' $A_\G$ for a graph $\G$ by simply adding the identity matrix to its adjacency matrix; the rows (or columns) of $A_\G$ give the generators $g^v$ when the $1$'s in the rows are replaced by $g$ and $0$'s by the identity $1 \in G$. 
Then $C_\G$ is the matrix built by adding ``intersections'' of rows from $A_\G$ to the bottom of $A_\G$, where the intersection of two rows is their bitwise AND; this comes from taking $[g^u, h^v]$ for vertices $u$ and $v$ and noticing that the result of this element of $[G, G]^{|\G|}$ is that the state of all vertices in the intersection of the closed neighborhoods $N[u]$ and $N[v]$ of $u$ and $v$ respectively is multiplied by the commutator $[g, h]$ (by ``closed neighborhood'' $N[v]$ of $v$ we mean all vertices adjacent to $v$ and $v$ itself). One of the main results in \cite{FirstPaper} was that if $\G$ is $G$-RA for every Heisenberg group, then $\G$ is RA. 
This fact, combined with the fact that a Heisenberg group $G$ has central commutator subgroup $[G, G]$, implies that the $\Z$-span of the rows of $C_\G$ tells us everything we need to know about whether $\G$ is RA or not. If the rows of $C_\G$ span all of $\Z^{|\G|}$, we know that we can change the state of any one vertex by $[g, h]$ for any elements $g, h \in G$, for any group $G$. On the other hand, if $C_\G$ has any nontrivial elementary divisors in its Smith Normal Form
\footnote{We always ignore the rectangular nature of the matrix and think of just the $|\G|$ elementary divisors along its main diagonal.},
and $p$ is a prime dividing one of the nontrivial elementary divisors (which may equal 0, in which case any prime $p$ would do), then for the Heisenberg group $G = H(\F_p)$, $\G$ is not $G$-RA. In fact, if there are $k$ elementary divisors of $C_\G$ that are divisible by $p$, then $[H(\F_p), H(\F_p)]^{|\G|} \cap H(\F_p)^\G$ is isomorphic to $(\Z/p\Z)^{|\G| - k}$. In general, the more nontrivial elementary divisors $C_\G$ has, the harder it is to analyze the puzzle. For example, it becomes more difficult to make a concrete theory to determine whether an element of $G^{|\G|}$ is in $G^\G$, or given an element shown by a computer to be in $G^\G$, for a human to find a set of ``clicks'' $g_1^{v_1}, \dots, g_m^{v_m}$ that would result in this element. 

In this paper, we give more criteria for certain graphs to be RA and explore graphs that are not, as well as how far graphs can be from being RA, and we essentially show they can be as far from RA as one could ask for in \cref{thm:allyourbase}. 

We say a graph $\G$ is $1/\mu$-RA if $C_\G$ has a single nontrivial, nonzero elementary divisor and that divisor is $\mu$ (allowing the possibility of $\G$ being RA, in which case $\mu = 1$). In \cref{sec:almost-RA}, we first develop the tools we will use throughout the paper to both show that a graph \textit{is} $1/\mu$-RA and to determine $\mu$ as the gcd of certain numbers related to the degrees of vertices and intersections of neighborhoods in the graph.

In \cref{sec:prods} we explore strong products, cartesian (box) products, and tensor products of graphs. The strong product of two graphs produces the tensor product of their activation matrices, so they are easiest to understand. Cartesian products have particularly nice characterizations: they are $1/\mu$-RA, where $\mu$ is described combinatorially in terms of the properties of the graphs involved in \cref{thm:bprod-1/mu}. This description, and in particular \cref{eg:CylTori}, allows us to extend to the non-abelian setting much work that has been done in the Lights Out Puzzle community related to grids, tori, and cylinders, as well as puzzles where clicking a square affects its entire row and column ($K_m \bprod K_n$), the higher-dimensional analogues of all of these, and others. Tensor products also behave well enough to be characterized as $1/\mu$-RA for a well-described $\mu$ in many cases (see \cref{thm:tensor-bipartite}, \cref{thm:tensor-2bipartite}, and \cref{prop:non-bip-tensor}), and particularly the tensor product of arbitrarily many complete graphs (\cref{thm:tensor-completes}). 

Tensor products allow us to exhibit a family of graphs that are almost RA, namely the crown graphs $\Cr(2n) = K_2 \times K_n$, in \cref{sec:girth-4}, where we continue the exploration of girth 4 graphs started in \cite{FirstPaper} and paint a fairly complete picture of them. A connected, neighborhood-distinguishable\footnote{This means the closed neighborhoods of any two distinct vertices must not be equal.} girth 4 graph $\G$ must be $1/\mu$-RA, where $\mu$ is described in \cref{thm:girth-4-finite-index}. We demonstrate a family of graphs that are $1/\mu$-RA for any $\mu$, namely the crown graphs on $2\mu + 4$ vertices, and show that these are in fact the smallest girth-4 graphs that are $1/\mu$-RA (\cref{thm:crown-graph-smallest}). We also use these graphs and tensor products to construct, for any given $\mu$, an infinite family of graphs that is $1/\mu$-RA in \cref{cor:xK_{k+2}}.

In \cref{sec:girth-3}, a vastly different story about girth 3 graphs unfolds. While most girth 3 graphs (at least on small numbers of vertices) are RA and we can still build a family of $1/\mu$-RA graphs that are girth 3 for each $\mu$ (by just taking the pyramid over crown graphs --- see \cref{prop:pyramid}), we also find a family of graphs $\G$ for which $C_\G$ has arbitrarily many nontrivial nonzero elementary divisors (the Kneser graphs - see \cref{thm:kneser-very-not-RA} and \cref{thm:span-of-CVs}), as well as a family of graphs $\G$ that have arbitrarily large $\Z$-nullity (see \cref{thm:z(n)} and \cref{cor:unbounded-zero-el-div}). The latter is not a family that is known in the House of Graphs \cite{HouseOfGraphs} and may still be unexplored; we call these ``binary graphs'' because their activation matrix is written down using binary representations of the row numbers. We end the section with \cref{thm:allyourbase}, which says that we can make a graph $\G$ for which $\Rg$  has whatever nontrivial elementary divisors we prescribe, both zero and nonzero, essentially confirming that $\G$ can be as far from RA as desired. This paves the way for puzzle makers to explore puzzles over such graphs as they exhibit non-abelian behavior in the largest sense possible so that the order of operations makes the biggest difference.

Most graphs (at least on a small number of vertices) are RA, and we show the relevant data we have gathered in \cref{sec:data}. That means most families of standard graphs do not exhibit interesting non-RA examples, but one family - the Kneser graphs - do give us something to explore, and somehow, in each section of the paper there is something interesting to say about these graphs. 

\section{Notation and Background}

\subsection{Graphs}

We will assume all the graphs in this paper to be finite 
simple graphs, so that each edge is uniquely determined by its endpoints. We will refer to many standard graphs, such as the cycle graph on $n$ vertices $C_n$, the complete graph on $n$ vertices $K_n$, and the complete bipartite graph $K_{m,n}$. We will use $P_n$ to refer to the path graph with $n$ \emph{vertices}. 

We will also encounter the following graphs:
\begin{enumerate}
    \item The cube graph $Q_n$ on $2^n$ vertices, labeled with $n$-bit strings, and with vertices adjacent whenever their labels differ in a single bit.
    \item The crown graph $\Cr(2n)$ on $2n$ vertices; the vertices may be labeled $\{1, 2, \dots, n, 1', 2', \dots, n'\}$, where $i$ is adjacent to $j'$ and $i'$ to $j$ precisely when $i \ne j$. This can also be understood as the tensor product $K_2 \times K_n$ (see \cref{subsec:tensor_prod}). 
    \item The Kneser graph $\Kn(n, k)$ on $k$-sized subsets of $\{1, 2, \dots, n\}$, where two vertices are adjacent if their sets are disjoint.
\end{enumerate}

We use $N(v)$ to denote the neighborhood of $v$ (the vertices adjacent to $v$). We more commonly will want to refer to the closed neighborhood $N[v] = N(v) \cup \{v\}$. As shorthand, when we have a product of graphs, we write $N[u, v]$ for $N[(u, v)]$. We write $d(u,v)$ to mean the distance between $u$ and $v$, i.e., the number of edges in a shortest path between $u$ and $v$.

Let $\G$ be a graph on $n$ vertices $\{v_1, \ldots, v_n\}$. If $v = v_i$ is a vertex of $\G$, then $\bbe_v \in \Z^n$ is defined to be the standard basis vector $\bbe_i$. If $S$ is subset of the vertices, then we define
\[ \vec S = \sum_{v \in S} \bbe_v, \]
so that $\vec S$ is essentially the bit vector that represents $S$. We will most commonly use this notation for $\vec N[v]$. If $\bx$ and $\by$ are vectors whose components are only $0$ and $1$, then we will use $\bx \cap \by$ to mean the bitwise AND of $\bx$ and $\by$. Thus, $\vec N[u] \cap \vec N[v]$ means the same as $\vec S$ where $S = N[u] \cap N[v]$.

\subsection{$G^\G$ and the RA property}

The definition of $G^\G$ is inspired by the metaphor of playing Lights Out on $\G$, but where the state of each vertex is given by an element of $G$. Let us make this precise. Suppose that $\G$ is a graph on $n$ vertices. For any vector $\bx = \langle x_1, \ldots, x_n \rangle\in \Z^n$, we define $g^{\bx}$ to be the element $(g^{x_1}, \ldots, g^{x_n}) \in G^n$.  
Then for each vertex $v \in \G$, we define $g^v$ to be $g^{\vec N[v]}$. In other words, $g^v$ is the element of $G^n$ that is $g$ in the coordinates corresponding to $v$ and its neighbors, and is the identity of $G$ everywhere else. Then we define
\[ G^\G = \langle g^v \mid g \in G, v \in \G \rangle. \]
If we imagine that $\G$ starts out with the identity of $G$ at every vertex, and that ``clicking $v$ with $g$'' multiplies $v$ and its neighbors on the right by $g$, then $G^\G$ is precisely the state space of this puzzle.

If $G$ is abelian, then the structure of $G^\G$ is in some sense straightforward to obtain from linear algebra. When $G$ is non-abelian, however, the analysis of $G^\G$ is more difficult. To help understand $G^\G$, we can consider the natural epimorphism to $(G/[G,G])^\G$, giving rise to the short exact sequence
\[1 \to [G, G]^{|\G|} \cap G^\G \to G^\G \to (G/[G, G])^{\G} \to 1.\]
The easiest possibility to understand is when $[G,G]^{|\G|} \leq G^\G$, which says that given any element of $G^\G$, we may multiply each coordinate independently by any product of commutators and the result will still lie in $G^\G$. In this case, the description of $G^\G$ essentially only depends on the structure of the abelian group $(G/[G,G])^\G$. We say that $\G$ is $G$-RA (short for ``reducible to abelian'') if $[G,G]^{|\G|} \leq G^\G$, and $\G$ is RA if it is $G$-RA for each group $G$.

Note that if $g,h \in G$ and $u,v \in \G$, then $[g^u, h^v] = [g,h]^{\vec N[u] \cap \vec N[v]}$. For many graphs, it turns out that these commutators already generate all of $[G,G]^{|\G|}$, making $\G$ RA. In general, to help understand the subgroup of $[G,G]^{|\G|} \cap G^\G$ that is generated by commutators $[g^u, h^v]$, we define the RA matrix $C_\G$, whose rows are $\vec N[v]$ and $\vec N[u] \cap \vec N[v]$ over all vertices $u$ and $v$. We showed in \cite[Theorem 6.11]{FirstPaper} that $\G$ is RA if and only if $C_\G$ has $n$ elementary divisors of 1 -- in other words, if the integer row space (denoted $\Rg$) is all of $\Z^n$. Furthermore, if $C_\G$ has an elementary divisor divisible by $p$, then it is not $H(\F_p)$-RA, where $H(\F_p)$ is the Heisenberg group of upper-triangular $3 \times 3$ integer matrices modulo $p$.

If $\G$ has connected components $\G_1, \ldots, \G_k$, then $G^\G \cong G^{\G_1} \times \cdots \times G^{\G_k}$, so that we fully understand $G^\G$ just from knowing each $G^{\G_i}$. Hence we will always assume that the graphs we start with are connected, and the only time we will deal with a disconnected graph is when we take the tensor product of two connected bipartite graphs; see \cref{thm:tensor-2bipartite}. 

There is also a natural definition of $G^M$ for an integer matrix $M$: the group generated by $g^{\vec x}$ where $\vec x$ ranges over the rows of $M$. If $M$ and $M'$ are row-equivalent, then their rows generate the same integer lattice, so $G^M = G^{M'}$. In other words, we may as well put $M$ into Hermite form. It is also clear that if $M''$ is obtained from $M$ by permuting the columns (but performing no other column operations), then $G^M \cong G^{M''}$. Thus, we may in principle permute the columns of $M$ as we wish, and then reduce it to Hermite form. However, it turns out that the pivots of the resulting matrix are not an invariant of $M$; they may depend on how we permute the columns. For example, the matrix $\mat2102$ is already in Hermite form, with pivots $[2,2]$. If we switch the columns and then put the new matrix in Hermite form, we get $\mat1204$, and so now the pivots are $[1,4]$. 

\section{Neighborly and almost RA graphs}
\label{sec:almost-RA}

Throughout this paper, we will see that many classes of graphs $\G$ have an RA Matrix $C_\G$ with a single nontrivial elementary divisor. By the theory developed in \cite{FirstPaper}, if the nontrivial elementary divisor is a prime $p$, then the index of $[G^\G, G^\G]$ in $[G, G]^{|\G|}$ is precisely $p$ when $G = H(\F_p)$, the Heisenberg group of order $p^3$. (Indeed, in this case, $[G,G] = \Z/p\Z$ and $[G^\G, G^\G] \cong (\Z/p\Z)^{|\G|-1}$.) In fact, we will be able to say something more about all these classes of graphs: their Hermite form only has at most one element $k \ne 1$ along the diagonal, no matter the rearrangement of the columns of the RA matrix, and $k \ne 0$. 

\begin{definition}
\label{defn:almostRA}
For an integer $k \ge 1$, we will say the graph $\G$ is $1/k$-RA if the diagonal of the Hermite form of $C_\G$ has elements $[1^{|\G|-1}, k]$ for every arrangement of the columns of $C_\G$. We will call $\G$ \emph{almost RA} if it is $1/k$-RA for some $k \ge 1$.
\end{definition}

Note that, by a slight abuse of English, every RA graph is almost RA (with $k = 1$); it will be more convenient to allow this than to rule it out.

\begin{remark}
    If $\G$ is $1/k$-RA, then $k$ is the only nontrivial elementary divisor of $C_\G$ and the determinant of the largest minor of $C_\G$. 
\end{remark}

The following property is clear.

\begin{proposition} \label{prop:1/mu-alt-char}
If $\G$ is $1/k$-RA, then $k$ is the smallest positive integer such that $k\bbe_v \in \Rg$ for any vertex $v$. Furthermore, if $a\bbe_v \in \Rg$, then $k \mid a$.  
\end{proposition}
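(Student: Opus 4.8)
The plan is to reduce both assertions to the single statement that, for each vertex $v$, the set $I_v := \{a \in \Z : a\bbe_v \in \Rg\}$ equals $k\Z$. First I would note that $I_v$ is an ideal of $\Z$ — it is closed under addition and under scaling by integers — so $I_v = k_v\Z$ for a unique integer $k_v \ge 0$. With this in hand, the first sentence of the proposition is the claim that $k_v = k$ (the smallest positive element of $I_v$), while the ``Furthermore'' clause $a\bbe_v \in \Rg \Rightarrow k \mid a$ is the claim $I_v \subseteq k\Z$; both follow at once from $k_v = k$. So the whole proposition amounts to proving $k_v = k$ for every vertex $v$.

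For the easy inclusion $k_v \mid k$, I would invoke the preceding Remark: since $\G$ is $1/k$-RA, $k$ is the only nontrivial elementary divisor of $C_\G$, so the Smith normal form is $[1^{|\G|-1}, k]$ and the quotient $\Z^{|\G|}/\Rg$ is cyclic of order $k$. Consequently $k$ annihilates this quotient, giving $k\bbe_v \in \Rg$ and hence $k \in I_v$, i.e. $k_v \mid k$. (In particular $\Rg$ has full rank, so the Hermite form of $C_\G$ really does have $|\G|$ pivots along its diagonal.)

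The substance is the reverse divisibility, equivalently $k_v = k$. Here I would fix $v$, permute the columns of $C_\G$ so that the column indexed by $v$ is last, and reduce the permuted matrix to its upper-triangular Hermite form $H$, with diagonal $h_{11}, \dots, h_{nn}$ where $n = |\G|$. A back-substitution argument then identifies $k_v$ with the last pivot: if $m\bbe_v = \sum_i c_i H_i$ lies in $\Rg$, then reading coordinates $1, 2, \dots, n-1$ in turn forces $c_1 = \dots = c_{n-1} = 0$, since each pivot $h_{ii}$ is the leading nonzero entry of its row; this leaves $m\bbe_v = c_n h_{nn}\bbe_v$, so the smallest positive such $m$ is exactly $h_{nn}$, i.e. $k_v = h_{nn}$.

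Finally I would use the hypothesis in full force: because $\G$ is $1/k$-RA, \emph{every} rearrangement of the columns of $C_\G$ — in particular the one just chosen, with $v$'s column last — yields Hermite diagonal $[1^{n-1}, k]$, so the last pivot is $h_{nn} = k$. Combining with the previous paragraph gives $k_v = k$, as required. The step I expect to be the crux is pinning the nontrivial divisor to the final pivot: it is precisely the ``for every arrangement of the columns'' clause in \cref{defn:almostRA} — and not merely the Smith normal form, whose single nontrivial divisor could otherwise sit in any pivot slot — that guarantees $h_{nn} = k$ once $v$ is placed last.
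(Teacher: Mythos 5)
Your proof is correct: the paper offers no argument at all for this proposition (it is introduced with ``The following property is clear''), and your write-up — placing $v$'s column last, invoking the ``for every arrangement of the columns'' clause of \cref{defn:almostRA} to force the Hermite diagonal $[1^{n-1}, k]$ in that ordering, and back-substituting to identify the last pivot with the generator of $I_v$ — is precisely the reasoning that makes the statement clear, so you have taken the intended route. One small simplification: the Smith-normal-form detour for the easy direction is unnecessary, since the last row of that same Hermite form is literally $k\bbe_v$, giving $k\bbe_v \in \Rg$ at once.
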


We will see later that some broad classes of graphs all share the following property which makes it simple to show that they are almost RA.

\begin{definition}
\label{defn:neighborly}
We will call a pair of vertices $\{u,v\}$ \emph{positive} (resp. \emph{negative}) for a graph $\G$ if $\bbe_u + \bbe_v \in \Rg$ (resp. $\bbe_u - \bbe_v \in \Rg$). A pair is \emph{signed} 
if it is positive or negative. A graph $\G$ is \emph{neighborly} if every \textbf{edge} is signed. (That is, every pair $\{u,v\}$ with $u$ and $v$ adjacent.) We say that $\G$ is \emph{positively} (resp. \emph{negatively}) \emph{neighborly} if every edge is positive (resp. negative) for $\G$. 
\end{definition}

\begin{proposition} \label{prop:c-on-pair}
A connected graph $\G$ is neighborly (resp. negatively neighborly) if and only if every pair is signed (resp. negative). Furthermore, if $\G$ is positively neighborly, then whenever $u$ and $v$ are connected by a path of length $m$, $\bbe_u + (-1)^{m-1} \bbe_v \in \Rg$.
\end{proposition}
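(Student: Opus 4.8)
The plan is to prove the three claims by propagating signed relations along paths, using connectivity to reach all pairs and the definitions of positive/negative neighborly to track signs. Recall that a pair $\{u,v\}$ being positive means $\bbe_u + \bbe_v \in \Rg$ and negative means $\bbe_u - \bbe_v \in \Rg$, and that $\Rg$ is closed under integer linear combinations (it is an integer lattice, the row space of $C_\G$). The key mechanism is that if I have a signed relation between $u$ and $w$ and a signed relation between $w$ and $v$, then adding or subtracting the two relations eliminates $\bbe_w$ and produces a signed relation between $u$ and $v$. For instance, from $\bbe_u \pm \bbe_w \in \Rg$ and $\bbe_w \pm \bbe_v \in \Rg$, appropriate addition/subtraction yields $\bbe_u \pm \bbe_v \in \Rg$, so signedness chains transitively along edges.

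First I would prove the ``every pair is signed'' direction. Since $\G$ is connected and neighborly, every edge is signed; given any two vertices $u,v$, take a path $u = w_0, w_1, \ldots, w_m = v$. Each consecutive pair $\{w_{i-1}, w_i\}$ is an edge, hence signed, so $\bbe_{w_{i-1}} - \sigma_i \bbe_{w_i} \in \Rg$ for some sign $\sigma_i \in \{+1, -1\}$ (writing a positive edge as $\bbe_{w_{i-1}} - (-1)\bbe_{w_i}$ and a negative edge as $\bbe_{w_{i-1}} - (+1)\bbe_{w_i}$). Telescoping these relations by successive addition and substitution eliminates all intermediate $\bbe_{w_i}$ and leaves $\bbe_u \pm \bbe_v \in \Rg$, so $\{u,v\}$ is signed. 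The converse (if every pair is signed then $\G$ is neighborly) is immediate since edges are pairs. For the negatively neighborly statement, I would run the same telescoping but observe that when every edge is \emph{negative}, each $\sigma_i = +1$, so every step preserves the negative sign and the resulting relation is exactly $\bbe_u - \bbe_v \in \Rg$, making every pair negative.

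For the final ``furthermore'' clause, suppose $\G$ is positively neighborly and $u,v$ are joined by a path of length $m$. Now every edge relation is $\bbe_{w_{i-1}} + \bbe_{w_i} \in \Rg$. Taking the alternating combination $\sum_{i=1}^{m} (-1)^{i-1}(\bbe_{w_{i-1}} + \bbe_{w_i})$ telescopes: the interior terms cancel in pairs and one is left with $\bbe_{w_0} + (-1)^{m-1}\bbe_{w_m} = \bbe_u + (-1)^{m-1}\bbe_v \in \Rg$, which is exactly the claimed relation. I would present this as an explicit induction on $m$: the base case $m=1$ is the edge relation $\bbe_u + \bbe_v \in \Rg$, and the inductive step adds a new positive edge $\bbe_{w_{m-1}} + \bbe_{w_m}$ and subtracts (or adds) the inductive relation to flip the sign on the endpoint, giving the $(-1)^{m-1}$ pattern.

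The main obstacle, and the point requiring care, is the sign bookkeeping in the first part: because a neighborly graph may mix positive and negative edges arbitrarily, the final sign relating $u$ and $v$ depends on the particular path and its edge signs, so I must only claim that $\{u,v\}$ is signed (one of $\bbe_u + \bbe_v$ or $\bbe_u - \bbe_v$ lies in $\Rg$) rather than predict which sign. I should also note a subtlety: along a single path it is conceivable that \emph{both} $\bbe_u + \bbe_v$ and $\bbe_u - \bbe_v$ land in $\Rg$ (forcing $2\bbe_u, 2\bbe_v \in \Rg$), but this does not affect the statement, which only asserts that at least one holds. The positively/negatively neighborly cases are cleaner precisely because the sign is forced at every step, so no such ambiguity arises there.
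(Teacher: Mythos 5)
Your proof is correct and follows essentially the same route as the paper's: both arguments chain signed edge relations along a path (eliminating intermediate vertices by adding or subtracting consecutive relations) to handle the first claim, and both use the identical alternating telescoping sum $\sum_{i=1}^{m} (-1)^{i-1}(\bbe_{w_{i-1}} + \bbe_{w_i})$ for the positively neighborly case. Your additional remarks on sign bookkeeping and the possibility that both signs occur are sound but not needed beyond what the paper records.
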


\begin{proof}
The first part follows easily from the connectedness of the graph; for example, if there is a path $u - v - w$ in the graph and $\bbe_u + \bbe_v$ and $\bbe_v - \bbe_w$ are both in $\Rg$, then so is $(\bbe_u + \bbe_v) - (\bbe_v - \bbe_w) = \bbe_u + \bbe_w$. 

For the second part, consider a path of length $m$ from $u$ to $v$ , say $u = u_0, u_1, \ldots, u_m = v$. 
Then
\[ \bbe_{u} + (-1)^{m-1}\bbe_v = (\bbe_{u_0} + \bbe_{u_1}) - (\bbe_{u_1} + \bbe_{u_2}) + \cdots + (-1)^{m-1} (\bbe_{u_{m-1}} + \bbe_{u_m}) \in \Rg. \qedhere \] 
\end{proof}

    \begin{proposition} \label{prop:nbrly-is-almost-ra}
    If $\G$ is neighborly and $a \bbe_v \in \Rg$ for some $a > 0$ and some vertex $v$, then $\G$ is $1/k$-RA for some divisor $k$ of $a$.
    \end{proposition}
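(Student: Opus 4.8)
The plan is to show that the hypotheses force $\Rg$ to contain $k\bbe_w$ for \emph{every} vertex $w$ (with a single common value of $k$), and that this $k$ is the largest pivot of the Hermite form regardless of column order. First I would use \cref{prop:c-on-pair}: since $\G$ is neighborly and connected, every pair $\{v,w\}$ is signed, so for each vertex $w$ there is a sign $\epsilon_w \in \{\pm 1\}$ with $\bbe_v + \epsilon_w \bbe_w \in \Rg$ (taking $\epsilon_v = +1$ trivially). Starting from the given element $a\bbe_v \in \Rg$, I can transport it to any other vertex: adding $a(\bbe_v + \epsilon_w\bbe_w) - a\bbe_v = a\epsilon_w\bbe_w \in \Rg$, hence $a\bbe_w \in \Rg$ for every vertex $w$. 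Thus the sublattice of ``diagonal'' multiples is uniform across vertices.

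Next I would invoke \cref{prop:1/mu-alt-char}'s defining quantity: let $k$ be the smallest positive integer with $k\bbe_v \in \Rg$. Since $a\bbe_v \in \Rg$, we have $k \mid a$, so $k$ is a divisor of $a$ as claimed. The same signed-pair argument run with $k$ in place of $a$ shows $k\bbe_w \in \Rg$ for every vertex $w$. It remains to prove that this $k$ is exactly the largest Hermite pivot for any arrangement of columns, i.e.\ that the diagonal is $[1^{|\G|-1}, k]$. Because $k\bbe_w \in \Rg$ for all $w$, the lattice $\Rg$ contains $k\Z^n$, so $\Z^n / \Rg$ is a quotient of $\Z^n / k\Z^n = (\Z/k\Z)^n$ and in particular is a finite abelian group of exponent dividing $k$; hence $C_\G$ has nonzero elementary divisors, each dividing $k$. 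The minimality of $k$ forces $k$ itself to be the largest elementary divisor (if every elementary divisor strictly divided $k$, their product — hence the exponent — would too, contradicting that $k$ is the smallest multiple of $\bbe_v$ in $\Rg$).

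The main obstacle, and the part requiring care, is showing the Hermite diagonal is $[1^{|\G|-1}, k]$ for \emph{every} column arrangement, not merely that $k$ is the single nontrivial \emph{elementary divisor}. The elementary-divisor statement is basis-independent and follows from the Smith normal form / the exponent argument above, but Hermite pivots can depend on column order (as the $\mat2102$ example in the excerpt shows). The key observation I would exploit is that the product of the Hermite pivots equals $|\Z^n/\Rg|$, which is column-order independent, and that each pivot divides the previous in a way constrained by the elementary divisors: since there is exactly one nontrivial elementary divisor equal to $k$, the index $|\Z^n/\Rg| = k$, so the pivots are positive integers multiplying to $k$. I would then argue that no pivot other than the last can exceed $1$: each pivot $d_i$ is the gcd-type quantity determined by the $i$-th column after clearing, and since $\Rg \supseteq k\Z^n$ contains $k\bbe_w$ for the vertex $w$ indexing \emph{any} chosen last column, the Hermite reduction in that column produces exactly $k$ while all earlier columns must already have pivot $1$ (otherwise the product would exceed $k$). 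This pins the diagonal to $[1^{|\G|-1},k]$ independent of arrangement, completing the proof.
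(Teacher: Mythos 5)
Your first two steps are sound: transporting $a\bbe_v$ to every vertex via the signed pairs from \cref{prop:c-on-pair}, replacing $a$ by the minimal positive $k$ with $k\bbe_v \in \Rg$ (so $k \mid a$), and concluding $k\Z^n \subseteq \Rg$. Your worry about column-order dependence of the Hermite pivots is also legitimate. The gap is in the middle: you assert that ``$k$ is the single nontrivial elementary divisor'' follows from ``the Smith normal form / the exponent argument above,'' but that argument proves strictly less. From $k\Z^n \subseteq \Rg$ and the minimality of $k$ you may conclude that the \emph{largest} elementary divisor equals $k$ (since the exponent of $\Z^n/\Rg$ is the largest elementary divisor $d_n$, and $d_n < k$ would give $d_n\bbe_v \in \Rg$, contradicting minimality; note your parenthetical about ``their product'' is off --- the exponent is $d_n$, not the product --- though the intended conclusion $d_n = k$ is fine). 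What you cannot conclude is that the remaining elementary divisors are $1$. The lattice $k\Z^n$ itself defeats the inference: it contains $k\Z^n$, its quotient has exponent $k$, and $k$ is minimal with $k\bbe_v$ in the lattice, yet \emph{all} $n$ of its elementary divisors equal $k$. Since your closing Hermite-pivot argument is predicated on $|\Z^n/\Rg| = k$ (equivalently, on there being exactly one nontrivial elementary divisor), the back half of your proof rests on this unproved claim.

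The fix is to use neighborliness a second time, not only for transporting diagonal vectors. By \cref{prop:c-on-pair}, for every vertex $u \neq w$ one of $\bbe_u \pm \bbe_w$ lies in $\Rg$, so in the quotient $\Z^n/\Rg$ every generator $\bbe_u$ equals $\mp\bbe_w$; hence the quotient is cyclic, generated by the image of $\bbe_w$, whose order is exactly $k$ by minimality. This yields $\Z^n/\Rg \cong \Z/k\Z$, so the index is $k$ and the elementary divisors are $[1^{n-1},k]$; after that, your product-of-pivots argument (together with the observation that the last Hermite pivot in any column order is the minimal positive $c$ with $c\bbe_w \in \Rg$, namely $k$) does pin the diagonal to $[1^{|\G|-1},k]$ for every arrangement. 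The paper gets there more directly: it uses the $n-1$ vectors $\bbe_u \pm \bbe_w$ as rows to reduce $C_\G$ to an echelon form with $1$'s in the first $n-1$ diagonal positions, so only the last pivot can be nontrivial, and it is a nonzero divisor of $a$ because $a\bbe_w \in \Rg$.
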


    \begin{proof}
    Let $w$ be the vertex corresponding to the last column of $C_\G$. Since $\G$ is neighborly, for each vertex $u \neq w$, one of the vectors $\bbe_u \pm \bbe_w \in \Rg$. It follows that we can use row operations to reduce the matrix $C_\G$ to an echelon form where the diagonal has a 1 in every column but the last. Since $a\bbe_v \in \Rg$, we also have $a\bbe_v - a(\bbe_v \pm \bbe_w) = \mp a\bbe_w \in \Rg$ so the last number in the diagonal must be nonzero, and a divisor of $a$ (if $a\bbe_w \in \Rg$ and $b\bbe_w \in \Rg$, then $\gcd(a, b)\bbe_w \in \Rg$ by the Euclidean Algorithm). 
    \end{proof}

    In light of \cref{prop:nbrly-is-almost-ra}, we would like to find an easy way to establish that some $a \bbe_v$ is in $\Rg$. The following technical lemma will usually suffice by taking $S = N[v]$ or $S = N[u] \cap N[v]$.

    \begin{lemma} \label{lem:nbrly-kev}
    Suppose $\G$ is neighborly. Let $S$ be a set of vertices such that $\vec S \in \Rg$. Fix $v \in S$, and suppose that $S = \{v\} \sqcup A \sqcup B$ such that $\bbe_v + \bbe_u \in \Rg$ for $u \in A$ and $\bbe_v - \bbe_w \in \Rg$ for $w \in B$. Then $a \bbe_v \in \Rg$, where $a = 1 - |A| + |B|$.
    \end{lemma}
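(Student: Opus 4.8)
The plan is to produce $a\bbe_v$ directly as an explicit integer linear combination of vectors that are already known to lie in $\Rg$, so that membership follows immediately from $\Rg$ being a $\Z$-module. The three ingredients I would use are: the vector $\vec S$ itself (in $\Rg$ by hypothesis), the vectors $\bbe_v + \bbe_u$ for $u \in A$, and the vectors $\bbe_v - \bbe_w$ for $w \in B$. Expanding $\vec S = \bbe_v + \sum_{u \in A}\bbe_u + \sum_{w \in B}\bbe_w$, the off-diagonal basis vectors $\bbe_u$ and $\bbe_w$ are exactly the ``noise'' I want to cancel, and each signed relation is tailor-made to kill one of them.

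Concretely, I would subtract from $\vec S$ the relation $\bbe_v + \bbe_u$ for each $u \in A$ (removing $\bbe_u$ at a cost of $-\bbe_v$ each time) and add the relation $\bbe_v - \bbe_w$ for each $w \in B$ (removing $\bbe_w$ at a cost of $+\bbe_v$ each time). That is, I claim
\[ a\bbe_v = \vec S - \sum_{u \in A}(\bbe_v + \bbe_u) + \sum_{w \in B}(\bbe_v - \bbe_w). \]
Checking the right-hand side coordinate by coordinate, every $\bbe_u$ with $u \in A$ and every $\bbe_w$ with $w \in B$ cancels, while the coefficient of $\bbe_v$ collects to $1 - |A| + |B| = a$; no other basis vectors appear since $S = \{v\} \sqcup A \sqcup B$. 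Because the right-hand side is an integer combination of elements of $\Rg$, we conclude $a\bbe_v \in \Rg$.

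There is genuinely no hard step here: once the correct combination is written down, the verification is a routine cancellation, so I would keep it to the single displayed identity above plus a one-line check of coefficients. The only thing worth flagging is where the hypotheses are actually used: neighborliness does not enter the computation of the lemma itself, but rather guarantees (via the preceding propositions) that relations of the form $\bbe_v \pm \bbe_u \in \Rg$ are available in the intended applications where one takes $S = N[v]$ or $S = N[u]\cap N[v]$. The disjointness $S = \{v\} \sqcup A \sqcup B$ is what ensures the cancellation is clean and the coefficient of $\bbe_v$ is precisely $1 - |A| + |B|$, so I would be careful to treat $A$ and $B$ as disjoint from each other and from $\{v\}$ throughout.
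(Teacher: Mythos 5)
Your proof is correct and is essentially identical to the paper's: the paper's entire proof is the single displayed identity $(1 - |A| + |B|)\,\bbe_v = \vec S - \sum_{u \in A}(\bbe_v + \bbe_u) + \sum_{w \in B}(\bbe_v - \bbe_w) \in \Rg$, which is exactly the combination you wrote down. Your additional remarks on where the hypotheses enter (disjointness for clean cancellation, neighborliness only mattering in the applications) are accurate but not needed for the argument itself.
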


    \begin{proof}
    This follows from
    \[ (1 - |A| + |B|) \bbe_v = \vec S - \sum_{u \in A} (\bbe_{v} + \bbe_{u}) + \sum_{w \in B} (\bbe_v - \bbe_{w}) \in \Rg. \qedhere \]
    \end{proof}

    Combining \cref{prop:nbrly-is-almost-ra} with \cref{lem:nbrly-kev}, we see that a neighborly graph $\G$ will be almost RA unless, in \cref{lem:nbrly-kev}, every $1 - |A| + |B|$ is 0. This puts strong restrictions on the closed neighborhoods ($S = N[v]$) and their pairwise intersections ($S = N[u] \cap N[v])$, and we have not yet found such a graph.

    Now we introduce the main tool that will show us many classes of graphs are particularly nice, being either $1/2$-RA or RA (see \cref{cor:pos-nbr-1/mu,thm:bprod-1/mu,cor:girth-4,kn-prods-ra,thm:K_mxnot-bipartite,thm:tensor-completes,thm:girth-4-finite-index}).

    \begin{theorem} \label{thm:half-ra}
    If $\G$ is neighborly, then the following are equivalent.
    \begin{enumerate}[(1)]
        \item For some $u$ and $v$, both vectors $\bbe_u \pm \bbe_v$ are in $\Rg$.
        \item $2 \bbe_u \in \Rg$ for some vertex $u$.
        \item $\G$ is $1/2$-RA or RA.
    \end{enumerate}
    Furthermore, in this case, $\G$ is $1/2$-RA if and only if every vertex has odd degree and every pair of vertices has an even number of common neighbors (possibly zero).
    \end{theorem}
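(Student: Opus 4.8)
The plan is to prove the three-way equivalence by a short cycle and then handle the ``Furthermore'' clause by passing to $\F_2$.

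For the equivalences, I would first dispatch (1) $\Leftrightarrow$ (2): adding $\bbe_u + \bbe_v$ and $\bbe_u - \bbe_v$ gives $2\bbe_u$, and conversely, given $2\bbe_u \in \Rg$ and any signed pair $\bbe_u \pm \bbe_v \in \Rg$ (which exists by neighborliness together with \cref{prop:c-on-pair}), subtracting recovers the opposite sign, so both $\bbe_u \pm \bbe_v$ lie in $\Rg$. Then (2) $\Rightarrow$ (3) is immediate from \cref{prop:nbrly-is-almost-ra} with $a = 2$, since a positive divisor of $2$ is $1$ or $2$. Finally (3) $\Rightarrow$ (2) follows because an RA graph has $\bbe_v \in \Rg$ (so $2\bbe_v \in \Rg$) and a $1/2$-RA graph has $2\bbe_v \in \Rg$ directly by \cref{prop:1/mu-alt-char}.

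For the ``Furthermore'' clause, the key first step is to observe that (2) upgrades to $2\bbe_v \in \Rg$ for \emph{every} vertex $v$: from $2\bbe_u \in \Rg$ and a signed pair $\bbe_u \pm \bbe_v \in \Rg$, doubling the pair and subtracting $2\bbe_u$ yields $\pm 2\bbe_v \in \Rg$. Hence $2\Z^n \subseteq \Rg$, so reduction mod $2$ loses no information about RA-ness: writing $W \subseteq \F_2^n$ for the row space of $C_\G$ reduced mod $2$, the third isomorphism theorem gives $\Z^n/\Rg \cong \F_2^n/W$, and thus $\G$ is RA iff $W = \F_2^n$ and $1/2$-RA iff $W$ has corank $1$. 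The crux is then to pin down the orthogonal complement $K = W^{\perp}$ in $\F_2^n$: since every pair is signed, each $\bbe_u + \bbe_v$ (the mod-$2$ reduction of a signed relation) lies in $W$, so any $\bz \in K$ satisfies $z_u = z_v$ for all $u,v$; over $\F_2$ this forces $K \subseteq \{\vec 0, \mathbf 1\}$, where $\mathbf 1$ is the all-ones vector. Consequently $K$ is nontrivial iff $\mathbf 1 \in K$, and $\G$ is $1/2$-RA exactly when $\mathbf 1$ is orthogonal mod $2$ to every row of $C_\G$.

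The final step is the parity bookkeeping. Dotting $\mathbf 1$ with a closed-neighborhood row $\vec N[v]$ gives $|N[v]| = 1 + \deg(v)$, which vanishes mod $2$ iff $\deg(v)$ is odd; dotting with an intersection row $\vec N[u] \cap \vec N[v]$ gives $|N[u] \cap N[v]|$, whose parity (for $u \ne v$) matches the number of common neighbors, since the two extra vertices $u,v$ that appear when $u,v$ are adjacent contribute an even count and do not change parity. So $\mathbf 1 \in K$ iff every vertex has odd degree and every pair has an even number of common neighbors, exactly as claimed. I expect the only delicate point to be this last bookkeeping --- getting the closed-versus-open neighborhood and adjacent-versus-nonadjacent casework right so that $|N[u] \cap N[v]|$ is correctly translated into the ``common neighbors'' language; the conceptual key that makes the whole argument clean is the mod-$2$ reduction enabled by $2\Z^n \subseteq \Rg$.
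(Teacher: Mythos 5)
Your proposal is correct. The three-way equivalence is proved exactly as in the paper (the same two identities give (1) $\Leftrightarrow$ (2), and \cref{prop:nbrly-is-almost-ra} together with \cref{prop:1/mu-alt-char} give (2) $\Leftrightarrow$ (3)), but your treatment of the ``Furthermore'' clause takes a genuinely different route. The paper stays in $\Z^n$ for both directions: if every row sum of $C_\G$ is even, then $2$ divides the largest elementary divisor, forcing the last Hermite pivot to be $2$; while if some row has odd sum $k$, the paper adds/subtracts signed pairs $\bbe_u \pm \bbe_v$ against that row (in the style of \cref{lem:nbrly-kev}) to produce $c\bbe_u \in \Rg$ with $c$ odd, and then $\gcd(c,2)=1$ yields $\bbe_u \in \Rg$ and hence RA. You instead observe that $2\Z^n \subseteq \Rg$, pass to $\F_2$ via the third isomorphism theorem, and pin down $K = W^{\perp}$: signedness of every pair (from \cref{prop:c-on-pair} and the standing connectivity assumption) forces $K \subseteq \{\vec 0, \mathbf{1}\}$, so the RA-versus-$1/2$-RA dichotomy becomes exactly $\mathbf{1} \notin K$ versus $\mathbf{1} \in K$, i.e., some odd row sum versus all even row sums --- the same parity bookkeeping as the paper's, but organized through duality rather than explicit integer manipulation. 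What your version buys is that the two cases are visibly exhaustive (a one-line dimension count over $\F_2$ replaces the paper's inline computation); what the paper's version buys is elementarity and consistency with the manipulative technique it reuses throughout \cref{sec:almost-RA}. Two points worth making explicit in a final write-up, though neither is a gap: the identification ``$1/2$-RA iff corank $1$'' leans on the fact that (3) is already established (so the corank is at most $1$ and the two cases exhaust all possibilities), and the equality $\dim K = n - \dim W$ uses non-degeneracy of the standard dot product on $\F_2^n$.
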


    \begin{proof}
    (1) clearly implies (2) since $(\bbe_u + \bbe_v) + (\bbe_u - \bbe_v) = 2 \bbe_u$. Conversely, if $2 \bbe_u \in \Rg$, then $\bbe_u + \bbe_v = 2 \bbe_u - (\bbe_u - \bbe_v)$, and so (2) implies (1). Finally, in light of \cref{prop:1/mu-alt-char} and \cref{prop:nbrly-is-almost-ra}, (2) and (3) are equivalent.

    Now, suppose that $\G$ is $1/2$-RA or RA. If every vertex has odd degree and every pair of vertices has an even number of common neighbors, then every row of $C_\G$ has an even sum, and so $2$ must divide the largest elementary divisor of $\G$, which means the determinant of the largest minor is divisible by 2, hence the last entry in the diagonal of the Hermite form must be 2, i.e. $\G$ is $1/2$-RA. Conversely, if any row had an odd sum $k$, then by fixing $u$ and adding vectors of the form $\bbe_u - \bbe_v$ and subtracting vectors of the form $\bbe_u + \bbe_v$, where $v$ ranges over all other vertices whose coordinates are 1 in that row, we would get $c \bbe_u \in \Rg$ for some odd number $c$. Combined with the fact that $2 \bbe_u \in \Rg$, we see that $\bbe_u \in \Rg$, which means $\G$ is RA.
    \end{proof}

    We will use the following fact in the proof of \cref{thm:nbrly-mu}, where we could assume the graph $\G$ is neighborly and the matrix $A = C_\G$ has only a single nontrivial elementary divisor, making the proof easier. However, for the sake of generality and completeness, we prove a much more general result.
    
    \begin{proposition}\label{prop:U-V}
    Let $A$ be an integer matrix and $A\bx \equiv \vec 0 \pmod m$ for some integer $m > 1$ and nonzero vector $\bx$. Then $A$ has an elementary divisor divisible by $m$. 
    \end{proposition}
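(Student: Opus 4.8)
The plan is to reduce $A$ to diagonal form via its Smith normal form and then lean on the divisibility chain of the elementary divisors. Write $A = UDV$ with $U,V$ unimodular over $\Z$ and $D$ diagonal with entries $d_1 \mid d_2 \mid \cdots$ (padded with zeros beyond the rank). Putting $\by = V\bx$ and multiplying the congruence $A\bx \equiv \vec 0 \pmod m$ on the left by the integer matrix $U^{-1}$, I get the equivalent system $D\by \equiv \vec 0 \pmod m$, that is $d_i y_i \equiv 0 \pmod m$ for each $i$. Two features of this change of variables matter: since $\det V = \pm 1$ is a unit mod $m$, $\by$ is nonzero mod $m$ exactly when $\bx$ is; and since $V$ is unimodular it preserves the content $\gcd(x_1,\ldots,x_n)$ of the vector, so $\gcd(y_1,\ldots,y_n,m) = \gcd(x_1,\ldots,x_n,m)$.

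Next I would clear away two easy cases. If $A$ is column-rank-deficient then some $d_i = 0$; as $m \mid 0$, this is already an elementary divisor divisible by $m$ and we are done. So assume all $d_i \neq 0$ with $d_1 \mid \cdots \mid d_n$. If $m = p$ is prime the argument is immediate: ``$\by$ nonzero mod $p$'' means some $y_j$ is a unit mod $p$, and then $d_j y_j \equiv 0$ forces $p \mid d_j$.

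The composite case is where the real work lies, and it is also where I expect the statement to need a touch more than the bare hypothesis $\bx \not\equiv \vec 0$: a relation $d_j y_j \equiv 0 \pmod m$ with $y_j$ a zero-divisor does not force $m \mid d_j$ (e.g. $D = \mathrm{diag}(1,2)$, $m = 4$, $\by = (0,2)$). What rescues the conclusion is the primitivity of $\bx$ relative to $m$, i.e. $\gcd(y_1,\ldots,y_n,m) = 1$ --- automatic in the intended application, where $\bx = \bbe_v$ has a unit coordinate, and inherited by $\by$ by the content remark above. Granting this, suppose for contradiction $m \nmid d_n$, so $m_n := m/\gcd(m,d_n) > 1$, and choose a prime $p \mid m_n$. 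Because $d_i \mid d_n$ gives $\gcd(m,d_i) \mid \gcd(m,d_n)$, the integer $m_n$ divides every $m_i := m/\gcd(m,d_i)$, so $p \mid m_i$ for all $i$. Rewriting $d_i y_i \equiv 0 \pmod m$ as $m_i \mid y_i$ then yields $p \mid y_i$ for every $i$, hence $p \mid \gcd(y_1,\ldots,y_n,m) = 1$, a contradiction. Thus $m \mid d_n$, exhibiting the required elementary divisor.

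The one genuinely delicate point, and the step I would flag as the main obstacle, is this composite-modulus argument: passing from ``some prime $p \mid m$ divides an elementary divisor'' (which nonvanishing of the mod-$m$ kernel gives cheaply) up to the full conclusion ``$m$ divides an elementary divisor''. The divisibility chain $d_1 \mid \cdots \mid d_n$ is exactly the lever that lets a single prime $p$ obstruct every coordinate simultaneously, and the primitivity of $\bx$ mod $m$ is exactly the input that turns that obstruction into a contradiction.
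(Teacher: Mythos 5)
Your proof is correct, and the defect you flagged is genuine --- in fact it infects the paper's own proof, not just the statement. The proposition as written is false: your example in matrix form, $A = \mathrm{diag}(1,2)$, $\bx = (0,2)^T$, $m = 4$, satisfies the hypotheses (with $\bx$ nonzero even modulo $m$), yet no elementary divisor is divisible by $4$. The paper's proof breaks at exactly the step you anticipated: after deducing that every entry of $Q^{-1}\bx$ is divisible by $p$, it concludes that $\bx$ lies in the right kernel of $Q^{-1}$ modulo $p$ and declares this ``a contradiction,'' but $\bx \equiv \vec 0 \pmod p$ does not contradict $\bx \neq \vec 0$ over $\Z$. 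The argument is valid only under the extra hypothesis you isolate: for every prime $p$ dividing $m$ one needs $\bx \not\equiv \vec 0 \pmod p$, equivalently that the gcd of the entries of $\bx$ is coprime to $m$. Happily this holds in the paper's one application, \cref{thm:nbrly-mu} --- though note that there $\bx = \sum_{u \in U}\bbe_u - \sum_{v \in V}\bbe_v$, with all entries $\pm 1$, rather than the $\bbe_v$ you mention --- and also in the cited special case $\bx = \vec 1$, so the downstream results stand once the proposition is restated with this primitivity hypothesis.

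Granting that hypothesis, your argument and the paper's are close cousins, differing mainly in bookkeeping. The paper goes prime by prime: for each prime $p$ with $p^k$ the largest power of $p$ dividing $m$, it shows some $d_i$ (hence $d_n$) must be divisible by $p^k$, since otherwise every coordinate of $Q^{-1}\bx$ is divisible by $p$ and unimodularity forces $\bx \equiv \vec 0 \pmod p$; it then assembles $m \mid d_n$ from the prime powers. You instead run a single contradiction with $m$ itself: if $m \nmid d_n$, the divisor chain makes a prime $p \mid m/\gcd(m,d_n)$ divide every $m_i = m/\gcd(m,d_i)$, hence every $y_i$, contradicting primitivity. Both routes rest on the same two pillars (Smith normal form with a unimodular change of variables, and primitivity of $\bx$ relative to $m$); the chief added value of yours is that it makes explicit the hypothesis the paper uses silently, and your prime case applied to each maximal prime power of $m$ essentially recovers the paper's argument.
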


    \begin{proof}
    Let\footnote{Argument adapted from a solution generated by OpenAI's ChatGPT, August 24, 2025.} $PAQ = D$, with $P$ and $Q$ unimodular and $D$ the diagonal matrix representing the Smith normal form of $A$, and the elementary divisors are $d_1 \mid d_2 \mid \cdots \mid d_n$. Then $\pmod m$ we have $DQ^{-1}\bx = PA\bx = P\vec 0 = \vec 0$. Letting $p$ be a prime divisor of $m$ and $p^k$ the highest power of $p$ dividing $m$, and further letting $x_i$ be the $i$-th entry of $Q^{-1}\bx$, we must have $d_ix_i \equiv 0 \pmod{p^k}$ for all $i$. If there is no elementary divisor $d_i$ that is divisible by $p^k$, then $x_i$ must be divisible by 
    $p$ for each $i$. But that means 
    $\bx$ is in the right kernel of $Q^{-1}$ (mod $p$), which has determinant $\pm 1$ - a contradiction. Thus, there must be an elementary divisor $d_i$ divisible by $p^k$ and in particular $d_n$ is divisible by $p^k$. Such is true for all prime powers dividing $m$, so $d_n$ must be divisible by $m$ (possibly $d_n = 0$).
    \end{proof}

    Note that \cite[Proposition 2.1]{FirstPaper} is the special case of this statement where the $\bx = \vec 1$ and $m$ is prime. 

    Now we can establish that many neighborly graphs are almost RA, and we can even determine the value of $k$ such that they are $1/k$-RA.

    \begin{theorem} \label{thm:nbrly-mu}
    Let $\G$ be neighborly. Suppose that $\G = U \sqcup V$ is a partition of the graph such that $\bbe_u - \bbe_v \in \Rg$ if $u$ and $v$ are in the same part, and otherwise $\bbe_u + \bbe_v \in \Rg$. Let
    \[ \delta = \gcd(\{|N[v] \cap U| - |N[v] \cap V| : v \in \G \}), \textrm{ and} \]
    \[ \kappa = \gcd(\{|N[u] \cap N[v] \cap U| - |N[u] \cap N[v] \cap V| : u,v \in \G, u \neq v \}).\]
    If $\delta$ and $\kappa$ are not both zero, then $\G$ is $1/\mu$-RA, where $\mu = \gcd(\delta, \kappa)$.    
    \end{theorem}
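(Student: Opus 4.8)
The plan is to combine \cref{lem:nbrly-kev} and \cref{prop:nbrly-is-almost-ra} to first show that $\G$ is $1/k$-RA for \emph{some} $k \ge 1$, and then to pin down the value of $k$ by proving the two divisibilities $k \mid \mu$ and $\mu \mid k$. The partition $U \sqcup V$ is exactly the data that turns the abstract signs in \cref{lem:nbrly-kev} into the concrete counts appearing in $\delta$ and $\kappa$: for a set $S$ of vertices containing a vertex $v$, writing $A = S \cap V$ and $B = (S \cap U)\setminus\{v\}$ when $v \in U$ (and symmetrically when $v \in V$) makes the hypotheses of \cref{lem:nbrly-kev} hold, and the resulting coefficient $1 - |A| + |B|$ equals $\pm(|S \cap U| - |S \cap V|)$.

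First I would establish that $\G$ is almost RA and that $k \mid \mu$. Applying the observation above with $S = N[v]$ (whose indicator $\vec N[v]$ is a row of $C_\G$, hence in $\Rg$) yields $\big(|N[v]\cap U| - |N[v]\cap V|\big)\bbe_v \in \Rg$ for every $v$, and applying it with $S = N[u]\cap N[v]$ and any chosen $z \in S$ yields $\big(|N[u]\cap N[v]\cap U| - |N[u]\cap N[v]\cap V|\big)\bbe_z \in \Rg$. Since $\delta$ and $\kappa$ are not both zero, at least one of these coefficients is nonzero, so some positive multiple of a standard basis vector lies in $\Rg$; by \cref{prop:nbrly-is-almost-ra} this makes $\G$ a $1/k$-RA graph for some $k \ge 1$. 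Now \cref{prop:1/mu-alt-char} tells us that $k$ divides every coefficient $a$ for which $a\bbe_v \in \Rg$; running over all vertices and all pairs shows $k \mid \delta$ and $k \mid \kappa$, hence $k \mid \mu = \gcd(\delta,\kappa)$.

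The hard part --- and the reason \cref{prop:U-V} is stated --- is the reverse divisibility $\mu \mid k$, for which I would exhibit a single vector that $C_\G$ annihilates modulo $\mu$. If $\mu = 1$ there is nothing to prove, so assume $\mu > 1$ and take $\bx = \vec U - \vec V$, the nonzero $\pm 1$ vector that is $+1$ on $U$ and $-1$ on $V$. Dotting $\bx$ against a row $\vec N[v]$ of $C_\G$ gives exactly $|N[v]\cap U| - |N[v]\cap V|$, a multiple of $\delta$ and hence of $\mu$, while dotting against a row $\vec N[u] \cap \vec N[v]$ gives $|N[u]\cap N[v]\cap U| - |N[u]\cap N[v]\cap V|$, a multiple of $\kappa$ and hence of $\mu$. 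Thus $C_\G \bx \equiv \vec 0 \pmod \mu$ with $\bx \ne \vec 0$, so \cref{prop:U-V} forces some elementary divisor of $C_\G$ to be divisible by $\mu$. But since $\G$ is already known to be $1/k$-RA, its only nontrivial elementary divisor is $k$, so $\mu \mid k$. Together with $k \mid \mu$ this gives $k = \mu$, proving $\G$ is $1/\mu$-RA. I expect the only delicate bookkeeping to be the consistent handling of the signs coming from whether a vertex lies in $U$ or in $V$; the conceptual crux is simply the choice $\bx = \vec U - \vec V$.
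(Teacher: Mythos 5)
Your proposal is correct and takes essentially the same approach as the paper: both use \cref{lem:nbrly-kev} with $S = N[v]$ and $S = N[u] \cap N[v]$, combined with \cref{prop:nbrly-is-almost-ra} and \cref{prop:1/mu-alt-char}, to show $\G$ is $1/k$-RA with $k \mid \gcd(\delta,\kappa)$, and both then use the vector $\sum_{u \in U} \bbe_u - \sum_{v \in V} \bbe_v$ in the kernel of $C_\G$ modulo $\gcd(\delta,\kappa)$ together with \cref{prop:U-V} to force the reverse divisibility. The differences are purely cosmetic (your explicit split into the two divisibilities $k \mid \mu$ and $\mu \mid k$, and your explicit handling of the $\mu = 1$ case, which the paper leaves implicit).
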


    \begin{proof}
    Let us assume that $v \in U$ and let $S = N[v]$. Let $A = S \cap V$ and $B = (S \setminus \{v\}) \cap U$, so that $S = \{v\} \sqcup A \sqcup B$. Then by \cref{lem:nbrly-kev}, $(1 - |A| + |B|) \bbe_v \in \Rg$. On the other hand, $1 - |A| + |B| = 1 - |S \cap V| + (|S \cap U| - 1) = |S \cap U| - |S \cap V|$. So $d \bbe_v \in \Rg$, with $d = |S \cap U| - |S \cap V|$. A similar calculation shows that the same is true if $v \in V$ instead. It follows that $\delta \bbe_v \in \Rg$.

    Similarly, if $S = N[u] \cap N[v]$ and $w \in S$, then we can take $A = S \cap V$ and $B = (S \setminus \{w\}) \cap U$, and we again find that $k \bbe_v \in \Rg$, where $k = |N[u] \cap N[v] \cap U| - |N[u] \cap N[v] \cap V|$. So $\kappa \bbe_v \in \Rg$.

    By \cref{prop:nbrly-is-almost-ra} and \cref{prop:1/mu-alt-char}, if $\delta$ and $\kappa$ are both nonzero, then $\G$ is $1/\mu$-RA for some $\mu$ dividing $\gamma = \gcd(\delta, \kappa)$. On the other hand, our definition of $\delta$ and $\kappa$ ensures that $|S \cap U| - |S \cap V|$ is a multiple of $\gamma$ for every $S = N[v]$ and $S = N[u] \cap N[v]$. That means for every row of $C_\G$, the sum of the coordinates corresponding to vertices in $U$, less the sum of the coordinates corresponding to vertices in $V$, is divisible by $\gamma$. In other words, $\bx = \sum_{u \in U} \bbe_u - \sum_{v \in V} \bbe_v$ is in the right kernel of $C_\G \pmod \gamma$, so by \cref{prop:U-V} there is an elementary divisor divisible by $\gamma$. Since the only nontrivial elementary divisor is $\mu$, it follows that $\mu = \gamma$.
    \end{proof}

    \begin{corollary} \label{cor:pos-nbr-1/mu}
    Suppose that $\G$ is positively neighborly.
    \begin{enumerate}
        \item If $\G$ is not bipartite, then $\G$ is $1/2$-RA or RA, and it is $1/2$-RA if and only if every vertex has odd degree and every pair of vertices has an even number of common neighbors (possibly zero).
        \item If $\G$ is bipartite, then it is $1/\mu$-RA, where $\mu = \gcd(\delta, \kappa)$ with
    \[ \delta = \gcd(\{\deg(v)-1 : v \in \G\}) \textrm{ and}\]
    \[ \kappa = \gcd(\{|N[u] \cap N[v]| : u,v \in \G, u \neq v, d(u,v) = 2\}). \]
    \end{enumerate}
    \end{corollary}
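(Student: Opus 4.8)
The plan is to reduce both parts to machinery already in hand—\cref{thm:half-ra} for the non-bipartite case and \cref{thm:nbrly-mu} for the bipartite case—using \cref{prop:c-on-pair} to convert the cycle structure of $\G$ into statements about which vectors $\bbe_u \pm \bbe_v$ lie in $\Rg$. Throughout I assume $\G$ is connected, as is the paper's standing convention, so that \cref{prop:c-on-pair} applies; note that a positively neighborly graph is in particular neighborly, so the hypotheses of both theorems are met.

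For part (1), I would use non-bipartiteness to manufacture the single pair of vectors that triggers \cref{thm:half-ra}. Since $\G$ is connected and not bipartite, it contains an odd cycle $C$ of length $2\ell+1$. Fixing an edge $\{u,v\}$ of $C$, the edge itself is a simple path of length $1$, while the remainder of $C$ is a simple path from $u$ to $v$ of even length $2\ell$. By \cref{prop:c-on-pair}, the odd path gives $\bbe_u + \bbe_v \in \Rg$ and the even path gives $\bbe_u - \bbe_v \in \Rg$. This is exactly condition (1) of \cref{thm:half-ra}, so $\G$ is $1/2$-RA or RA, and the stated criterion distinguishing the two cases is verbatim the ``furthermore'' clause of that theorem.

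For part (2), I would verify that the bipartition $\G = U \sqcup V$ satisfies the hypothesis of \cref{thm:nbrly-mu} and then evaluate $\delta$ and $\kappa$. In a bipartite graph, two vertices lie in the same part exactly when they are joined by an even path and in different parts exactly when joined by an odd path; \cref{prop:c-on-pair} then yields $\bbe_u - \bbe_v \in \Rg$ in the former case and $\bbe_u + \bbe_v \in \Rg$ in the latter, precisely as \cref{thm:nbrly-mu} requires. To evaluate $\delta$, observe that for $v \in U$ every neighbor lies in $V$, so $N[v] \cap U = \{v\}$ and $N[v] \cap V = N(v)$, giving $|N[v]\cap U| - |N[v]\cap V| = 1 - \deg(v)$ (and $\deg(v)-1$ for $v \in V$); since $\gcd$ ignores sign, this reproduces $\delta = \gcd\{\deg(v)-1\}$.

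The crux is the distance-by-distance analysis of $N[u]\cap N[v]$ that collapses $\kappa$ onto the distance-$2$ pairs. I would argue as follows: if $d(u,v)=1$, then bipartiteness forbids a common neighbor, so $N[u]\cap N[v] = \{u,v\}$ with one vertex in each part and the signed count is $0$; if $d(u,v)\ge 3$, then $N[u]\cap N[v] = \emptyset$, again contributing $0$; and if $d(u,v)=2$, then $u,v$ share a part, their common neighborhood $N(u)\cap N(v)$ lies wholly in the opposite part, and $N[u]\cap N[v] = N(u)\cap N(v)$, so the signed count is $\pm|N[u]\cap N[v]|$. Hence only distance-$2$ pairs survive, yielding $\kappa = \gcd\{|N[u]\cap N[v]| : d(u,v)=2\}$. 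With $\delta$ and $\kappa$ identified, \cref{thm:nbrly-mu} immediately gives that $\G$ is $1/\mu$-RA with $\mu = \gcd(\delta,\kappa)$, provided $\delta$ and $\kappa$ are not both zero. I would close by noting that this proviso is harmless: $\delta = 0$ forces every vertex to have degree $1$, which for a connected graph means $\G = K_2$, the single excluded (and genuinely non-RA) case.
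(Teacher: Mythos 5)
Your proof is correct and follows essentially the same route as the paper: part (1) via \cref{prop:c-on-pair} feeding condition (1) of \cref{thm:half-ra}, and part (2) by checking that the bipartition satisfies the hypothesis of \cref{thm:nbrly-mu} and then computing $\delta$ and $\kappa$ through the same distance-by-distance analysis of $N[u] \cap N[v]$. You are in fact slightly more careful than the paper's proof, which invokes \cref{thm:nbrly-mu} without addressing its proviso that $\delta$ and $\kappa$ not both vanish; your closing observation that $\delta = 0$ forces every degree to be $1$, hence $\G = K_2$, is exactly what is needed to dispose of that degenerate case.
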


    \begin{proof}
    First, suppose that $\G$ is not bipartite. Then given any two vertices $u$ and $v$, there is a path of odd length and a path of even length between them. Then since $\G$ is positively neighborly, \cref{prop:c-on-pair} implies that both vectors $\bbe_u \pm \bbe_v$ are in $\Rg$, and the result follows from \cref{thm:half-ra}.

    Now suppose that $\G$ is bipartite with bipartition $U \sqcup V$. Then \cref{prop:c-on-pair} implies that $U$ and $V$ satisfy the conditions of \cref{thm:nbrly-mu}. If $v \in V$, then 
    \[ |N[v] \cap U| - |N[v] \cap V| = \deg(v) - 1, \]
    and otherwise $|N[v] \cap U| - |N[v] \cap V| = 1 - \deg(v)$. If $u$ and $v$ are neighbors, then one of them is in $U$ and one is in $V$, so that $|N[u] \cap N[v] \cap U| - |N[u] \cap N[v] \cap V| = 0$. 
    If $u$ and $v$ are are two apart, then $N[u] \cap N[v]$ is contained in either $U$ or $V$. Then the result follows directly from \cref{thm:nbrly-mu}. 
    \end{proof}

\begin{corollary} \label{thm:neg-nbr-1/mu}
Suppose that $\G$ is negatively neighborly. Then $\G$ is $1/\mu$-RA, where $\mu = \gcd(\delta, \kappa)$ with
    \[ \delta = \gcd(\{\deg(v)+1 : v \in \G\}) \textrm{ and} \]
    \[ \kappa = \gcd(\{|N[u] \cap N[v]| : u,v \in \G, u \neq v\}). \]
In particular, if $\G$ has an edge $u - v$ that is not part of a $3$-cycle $u - v - w - u$, e.g. if $\G$ is bipartite or has girth $4$, then $\mu \leq 2$.
\end{corollary}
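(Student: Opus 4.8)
The plan is to recognize this corollary as the degenerate special case of \cref{thm:nbrly-mu} in which all vertices are placed in a single part. First I would use connectedness to promote the neighborly hypothesis from edges to all pairs: since $\G$ is negatively neighborly, every edge is negative, and \cref{prop:c-on-pair} then guarantees that \emph{every} pair $\{u,v\}$ is negative, so that $\bbe_u - \bbe_v \in \Rg$ for all distinct $u,v$. This is exactly the input needed to run \cref{thm:nbrly-mu} with the trivial partition.

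Next I would feed the partition $\G = U \sqcup V$ with $U = \G$ and $V = \emptyset$ into \cref{thm:nbrly-mu}. The ``same part'' hypothesis $\bbe_u - \bbe_v \in \Rg$ is precisely what the previous step supplies, while the cross-part condition involving $\bbe_u + \bbe_v$ is vacuous because $V$ is empty. With $V = \emptyset$, the defining quantities collapse to $|N[v] \cap U| - |N[v] \cap V| = |N[v]| = \deg(v)+1$ and $|N[u]\cap N[v]\cap U| - |N[u]\cap N[v]\cap V| = |N[u]\cap N[v]|$, so $\delta$ and $\kappa$ become exactly the two gcds in the statement. Since $\deg(v)+1 \ge 1$ for every vertex, $\delta$ is a positive integer; in particular $\delta$ and $\kappa$ are not both zero, so \cref{thm:nbrly-mu} applies verbatim and yields that $\G$ is $1/\mu$-RA with $\mu = \gcd(\delta,\kappa)$.

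For the ``in particular'' clause, I would observe that an edge $u - v$ always forces $u, v \in N[u]\cap N[v]$, and any further vertex $w$ in that intersection is a common neighbor of $u$ and $v$, which closes off a triangle $u - v - w - u$. Hence if the edge $u - v$ lies on no $3$-cycle, then $N[u]\cap N[v] = \{u,v\}$ and $|N[u]\cap N[v]| = 2$, so $2$ appears in the set defining $\kappa$ and therefore $\kappa \mid 2$. Since $\mu = \gcd(\delta,\kappa)$ divides $\kappa$, we conclude $\mu \le 2$. Bipartite graphs and graphs of girth $4$ contain no triangles at all, so every edge qualifies and the bound applies.

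I do not expect a genuine obstacle here: the entire content is the observation that ``negatively neighborly'' is the $V = \emptyset$ specialization of \cref{thm:nbrly-mu}. The only points requiring a moment's care are confirming that the theorem legitimately permits an empty part (its hypotheses survive, with the $\bbe_u + \bbe_v$ clause becoming vacuous) and noting that $\delta > 0$, so the nondegeneracy assumption of \cref{thm:nbrly-mu} holds automatically.
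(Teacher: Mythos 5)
Your proposal is correct and takes exactly the same route as the paper: the paper's proof is the one-line observation that the first part follows from \cref{thm:nbrly-mu} with $V = \emptyset$, and that an edge on no triangle contributes $|N[u] \cap N[v]| = 2$ to $\kappa$. If anything, your write-up is more careful than the paper's, since you explicitly invoke \cref{prop:c-on-pair} to promote negativity from edges to all pairs and verify the nondegeneracy hypothesis $\delta > 0$, both of which the paper leaves implicit.
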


\begin{proof}
The first part follows directly from \cref{thm:nbrly-mu} by taking $V = \emptyset$. Then, if $\G$ is bipartite, we have $|N[u] \cap N[v]| = 2$ for any neighboring vertices $u$ and $v$, and the second part follows.
\end{proof}

\section{Exploring products of graphs}
\label{sec:prods}

In this section, we will apply the techniques developed in \cref{sec:almost-RA} to the strong product, cartesian product, and tensor product of graphs. In many cases, we can characterize the elementary divisors of the RA matrix of the product of two graphs in terms of the elementary divisors of the RA matrix of the factors. 

\subsection{Strong Products}
\label{subsec:strong_prod}
Recall the strong product $\G_1 \boxtimes \G_2$ of two graphs $\G_1$ and $\G_2$ is the graph whose vertices are $(u_1, u_2)$ with $u_1 \in \G_1, u_2 \in \G_2$ where $(u_1, u_2)$ is adjacent to $(v_1, v_2)$ if and only if:

\begin{enumerate}[i.]
    \item $u_1 = v_1$ and $u_2$ is adjacent to $v_2$, or
    \item $u_1$ is adjacent to $v_1$ and $u_2 = v_2$, or
    \item $u_1$ is adjacent to $v_1$ and $u_2$ is adjacent to $v_2$.
\end{enumerate}

Note that in the product $\G_1 \boxtimes \G_2$, we have $N[u_1, u_2] = N[u_1] \times N[u_2]$, and given the importance of the closed neighborhoods to $G^\G$, this makes the strong product perhaps the most ``natural'' product of two graphs for our consideration. We find the following properties.

\begin{theorem}
Let $\G_1, \G_2$ be undirected simple graphs with $|\G_1| = m$ and $|\G_2| = n$.
    \begin{enumerate}[(a)]
        \item The activation matrix $A_{\G_1 \boxtimes \G_2}$ for $\G_1 \boxtimes \G_2$ is the tensor product (or Kronecker product) $A_{\G_1} \otimes A_{\G_2}$ of the activation matrices $A_{\G_1}$ for $\G_1$ and $A_{\G_2}$ for $\G_2$. 
        \item If $A_{\G_1}$ has elementary divisors $a_1 \mid a_2 \mid \cdots \mid a_m$ and $A_{\G_2}$ has $b_1 \mid b_2 \mid \cdots \mid b_n$ (where the last few $a_i$'s and/or $b_j$'s may be 0), then $A_{\G_1 \boxtimes \G_2}$ has elementary divisors $\{a_ib_j\}_{i = 1, j = 1}^{m, n}$, up to rearrangement of the prime factors.
        \item $C_{\G_1 \boxtimes \G_2} = C_{\G_1} \otimes C_{\G_2}$, so the elementary divisors of the RA matrix $C_{\G_1 \boxtimes \G_2}$ are the products of the elementary divisors of the RA matrices $C_{\G_1}$ and $C_{\G_2}$ (up to rearrangement of the prime factors).
    \end{enumerate}
\end{theorem}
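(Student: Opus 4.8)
The plan is to treat the three parts in order, observing up front that (a) and the matrix identity buried in (c) are essentially bookkeeping about closed neighborhoods, while the real content lives in the elementary-divisor statements of (b) and (c).

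For part (a), I would argue directly from definitions. Write $A_\G$ as the $0/1$ matrix with $(A_\G)_{u,w} = 1$ exactly when $w \in N[u]$. The three adjacency clauses defining $\boxtimes$, together with the diagonal, give precisely $N[u_1, u_2] = N[u_1] \times N[u_2]$, as already noted in the text. Hence $(A_{\G_1 \boxtimes \G_2})_{(u_1,u_2),(v_1,v_2)} = 1$ iff $v_1 \in N[u_1]$ and $v_2 \in N[u_2]$, and since every entry is $0$ or $1$ this equals $(A_{\G_1})_{u_1,v_1}\,(A_{\G_2})_{u_2,v_2}$, which is exactly the $((u_1,u_2),(v_1,v_2))$ entry of the Kronecker product $A_{\G_1} \otimes A_{\G_2}$.

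For part (b), I would reduce the Smith normal form of a Kronecker product to that of a diagonal matrix. Writing $A_{\G_1} = P D_1 Q$ and $A_{\G_2} = R D_2 S$ with $P,Q,R,S$ unimodular and $D_1, D_2$ the Smith normal forms, the mixed-product property gives $A_{\G_1} \otimes A_{\G_2} = (P \otimes R)(D_1 \otimes D_2)(Q \otimes S)$. A Kronecker product of unimodular matrices is unimodular (its determinant is $(\det P)^n(\det R)^m = \pm 1$), so $A_{\G_1 \boxtimes \G_2}$ is equivalent to the diagonal matrix $\mathrm{diag}(a_i b_j)$. The crucial point is that this diagonal matrix is generally \emph{not} itself in Smith normal form, since $\{a_i b_j\}$ need not form a divisibility chain; this is exactly where ``up to rearrangement of the prime factors'' enters. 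To finish I compute the Smith normal form of a diagonal matrix prime by prime: for a fixed prime $p$, the $k$-th determinantal divisor (gcd of all $k \times k$ minors) of $\mathrm{diag}(c_1,\dots,c_N)$ has $p$-adic valuation equal to the sum of the $k$ smallest values among $\{v_p(c_\ell)\}$ — because the only nonzero minors are products of $k$ diagonal entries — so the $k$-th elementary divisor has $p$-valuation equal to the $k$-th smallest $v_p(c_\ell)$. Applying this to $\{a_i b_j\}$, whose valuations are $\{v_p(a_i) + v_p(b_j)\}$, shows that the multiset of $p$-parts of the elementary divisors is exactly the multiset of $p$-parts of the products $a_i b_j$, sorted increasingly; doing this for every prime is precisely the assertion that the elementary divisors of $A_{\G_1 \boxtimes \G_2}$ are the $a_i b_j$ with their prime factors redistributed.

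For part (c), I would first prove the matrix identity and then reuse the machinery of (b). The rows of $C_\G$ are the vectors $\vec N[u] \cap \vec N[v]$ over all ordered pairs $(u,v)$, where taking $u=v$ recovers the neighborhood rows $\vec N[u]$, so it suffices to understand these intersections in the product. From $N[u_1,u_2] = N[u_1] \times N[u_2]$ one gets
\[ N[u_1,u_2] \cap N[v_1,v_2] = (N[u_1] \cap N[v_1]) \times (N[u_2] \cap N[v_2]), \]
and since the indicator vector of a product of vertex sets is the tensor of the indicator vectors, the corresponding row is $(\vec N[u_1] \cap \vec N[v_1]) \otimes (\vec N[u_2] \cap \vec N[v_2])$. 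As $(u_1,v_1)$ and $(u_2,v_2)$ range independently over pairs in $\G_1$ and $\G_2$, these are exactly the vectors $r \otimes s$ with $r$ a row of $C_{\G_1}$ and $s$ a row of $C_{\G_2}$, i.e.\ the rows of $C_{\G_1} \otimes C_{\G_2}$. Thus $C_{\G_1 \boxtimes \G_2}$ and $C_{\G_1} \otimes C_{\G_2}$ have the same set of rows (up to repetition and ordering, which do not affect the Smith normal form), and the elementary-divisor claim then follows by the identical prime-by-prime argument as in (b), applied to the Smith normal forms of $C_{\G_1}$ and $C_{\G_2}$. The main obstacle is the bookkeeping in part (b): the equivalence $A_{\G_1 \boxtimes \G_2} \sim D_1 \otimes D_2$ is immediate, but because $D_1 \otimes D_2$ is diagonal yet not in Smith form, the genuine work — and the one spot needing real care — is the prime-local computation making ``up to rearrangement of the prime factors'' rigorous, namely that one factors each $a_i b_j$ into prime powers and regroups across indices rather than reading the $a_i b_j$ off as a divisibility chain; parts (a) and (c) are then routine.
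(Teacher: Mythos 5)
Your proposal is correct and follows essentially the same route as the paper: entrywise/blockwise identification of $A_{\G_1 \boxtimes \G_2}$ with $A_{\G_1} \otimes A_{\G_2}$, the mixed-product property $(P \otimes R)(D_1 \otimes D_2)(Q \otimes S)$ to reduce (b) to a diagonal matrix, and the identification of rows of $C_{\G_1 \boxtimes \G_2}$ with tensors of rows of $C_{\G_1}$ and $C_{\G_2}$ for (c). The only difference is that you spell out two steps the paper leaves implicit --- the unimodularity of Kronecker products of unimodular matrices, and the prime-by-prime determinantal-divisor computation justifying ``up to rearrangement of the prime factors'' --- which is a welcome addition, not a divergence.
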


\begin{proof}
    (a) If $A_{\G_1} = (a_{ij})$ and we label the vertices of $\G_1 \boxtimes \G_2$ in lexicographic order, then it is clear the activation matrix $A_{\G_1 \boxtimes \G_2}$ has the block form
    \[\begin{pmatrix}
        A_{\G_2} & a_{1,2}A_{\G_2} & \cdots & a_{1,m}A_{\G_2}\\
        a_{2,1}A_{\G_2} & A_{\G_2} & \cdots & a_{2,m}A_{\G_2}\\
        \vdots & \vdots & & \vdots\\
        a_{m,1}A_{\G_2} & a_{m,2}A_{\G_2} & \cdots & A_{\G_2}
    \end{pmatrix}.\]
    The well-known property $(A \otimes B)(C \otimes D) = AC \otimes BD$ guarantees that if $PA_{\G_1}Q = D_1$ and $RA_{\G_2}S = D_2$, with $D_1$ and $D_2$ the Smith normal forms of $A_{\G_1}$ and $A_{\G_2}$, respectively, then $(P \otimes R)A_{\G_1 \boxtimes \G_2}(Q\otimes S) = D_1 \otimes D_2$. Now, $D_1 \otimes D_2$ has precisely the entries $a_ib_j$ for $a_i$ in the diagonal of $D_1$ and $b_j$ in that of $D_2$, so up to rearrangement of the prime factors of $\{a_ib_j\}$, $D_1 \otimes D_2$ is the Smith normal form of $A_{\G_1 \boxtimes \G_2}$ and (b) follows.

    Any row of $C_{\G_1 \boxtimes \G_2}$ not in $A_{\G_1 \boxtimes \G_2}$ is the intersection of two rows of the form 
    \[(a_{u_1, 1}r_{v_1} \;\; a_{u_1, 2}r_{v_1} \; \cdots \; a_{u_1, m}r_{v_1}) \text{ and }(a_{u_2, 1}r_{v_2} \;\; a_{u_2, 2}r_{v_2} \; \cdots \; a_{u_2, m}r_{v_2})\]
    for some $u_1, u_2 \in \G_1, v_1, v_2 \in \G_2$, where $r_v$ denotes the row of $A_{\G_2}$ corresponding to vertex $v \in G_2$ (by intersection of rows, we mean the element-wise product as usual). Letting $r_{v_1} \cap r_{v_2}$ denote the intersections of the rows $r_{v_1}$ and $r_{v_2}$ and similarly $r_{u_1}$, $r_{u_2}$ denote the rows of $A_{\G_1}$ corresponding to the vertices $u_1$,$u_2$, respectively, the result is
    \[(a_{u_1,1}a_{u_2,1}(r_{v_1} \cap r_{v_2})\;\; a_{u_1,2}a_{u_2,2}(r_{v_1} \cap r_{v_2})\;\cdots \; a_{u_1,m}a_{u_2,m}(r_{v_1} \cap r_{v_2})),\]
    which is precisely the tensor product $(r_{u_1} \cap r_{u_2}) \otimes (r_{v_1} \cap r_{u_2})$. Just as in (b), (c) follows. 
\end{proof}

\subsection{Cartesian products}
\label{subsec:Cartesian}

The cartesian product $\G \bprod \Lambda$ of two graphs $\G$ and $\Lambda$ has vertex set $V(\G) \times V(\Lambda)$, and there is an edge between $(u_1, u_2)$ and $(v_1, v_2)$ whenever they agree in one coordinate and are adjacent in the other coordinate. That is, either $u_1 = v_1$ and $u_2$ and $v_2$ are neighbors, or $u_2 = v_2$ and $u_1$ and $v_1$ are neighbors. The \emph{prism over $\G$} is the graph $\G \bprod K_2$. 

We have seen that cube graphs are always either RA or $1/2$-RA. In fact, there is a much more general phenomenon at play here: the cartesian product of any bipartite graphs is either RA or $1/2$-RA. More generally, the cartesian product of any two graphs is almost RA:

\begin{theorem} \label{thm:bprod-1/mu}
The graph $\G_1 \bprod \G_2$ is neighborly. Furthermore,
\begin{enumerate}
    \item If $\G_1$ and $\G_2$ are either both bipartite or both not bipartite, then $\G_1 \bprod \G_2$ is either $1/2$-RA or RA, and it is $1/2$-RA if and only if the degrees of vertices of $\G_1$ and the degrees of vertices of $\G_2$ have opposite parity, and every pair of vertices in each $\G_i$ has an even number of common neighbors.
    \item Otherwise, if $\G_1$ is not bipartite and $\G_2$ is, then $\G_1 \bprod \G_2$ is $1/\mu$-RA, where $\mu = \gcd(\delta, \kappa_1, \kappa_2),$ with
    \[ \delta = \gcd(\{1 + \deg(u) - \deg(i) : u \in \G_1, i \in \G_2\}), \]
    \[ \kappa_{1} = \gcd(\{|N[u] \cap N[v]| : u,v \in \G_1, u \neq v\}), \textrm{ and} \]
    \[ \kappa_{2} = \gcd(\{|N[i] \cap N[j]| : i,j \in \G_2, d(i,j) = 2\}). \]
    
\end{enumerate}
\end{theorem}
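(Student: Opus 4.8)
The plan is to reduce both parts to \cref{thm:nbrly-mu} and \cref{thm:half-ra} once the closed neighborhoods of the product, their pairwise intersections, and above all the \emph{signs} of the edges are understood. First I would record that $N[(u,i)] = (N_{\G_1}[u]\times\{i\})\cup(\{u\}\times N_{\G_2}[i])$ and work out the four-case formula for $N[(u,i)]\cap N[(v,j)]$ according to whether the first and/or second coordinates agree. Two consequences drive everything. The \emph{diagonal} identity: if $u\sim v$ in $\G_1$ and $i\sim j$ in $\G_2$, then $N[(u,i)]\cap N[(v,j)] = \{(u,j),(v,i)\}$, so every such diagonal pair is positive. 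The \emph{same-fiber} identities: $N[(u,i)]\cap N[(v,i)] = (N_{\G_1}[u]\cap N_{\G_1}[v])\times\{i\}$ and $N[(u,i)]\cap N[(u,j)] = \{u\}\times(N_{\G_2}[i]\cap N_{\G_2}[j])$, which express the intersection rows of $C_{\G_1\bprod\G_2}$ in terms of those of the factors.

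The technical heart is determining edge signs; I would establish, for an edge $(u,i)\sim(v,i)$ in the $\G_1$-direction, the following (with symmetric statements for the $\G_2$-direction).
\begin{enumerate}
\item It is positive whenever $u,v$ have no common neighbor in $\G_1$, since the same-fiber intersection is then the $2$-element set $\{(u,i),(v,i)\}$.
\item It is positive whenever $\G_2$ is non-bipartite, by telescoping around an odd closed walk $i=j_0\sim j_1\sim\cdots\sim j_{2k+1}=i$ in $\G_2$: the two diagonals of each $4$-cycle on $(u,j_\ell),(v,j_\ell),(v,j_{\ell+1}),(u,j_{\ell+1})$ give $\bbe_{(u,j_\ell)}+\bbe_{(v,j_{\ell+1})}\in\Rg$ and $\bbe_{(v,j_\ell)}+\bbe_{(u,j_{\ell+1})}\in\Rg$, and chasing these around the odd walk forces $\bbe_{(v,i)}\equiv-\bbe_{(u,i)}$, i.e.\ $\bbe_{(u,i)}+\bbe_{(v,i)}\in\Rg$.
\item It is negative whenever $i$ has a neighbor $j$ in $\G_2$ for which $\{(u,i),(u,j)\}$ is positive, since then $(\bbe_{(u,i)}+\bbe_{(u,j)})-(\bbe_{(u,j)}+\bbe_{(v,i)})=\bbe_{(u,i)}-\bbe_{(v,i)}\in\Rg$ using a diagonal.
\end{enumerate}
Taken together these show that $\G_1\bprod\G_2$ is neighborly in every case, establishing the first sentence.

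For Part 2, say $\G_1$ non-bipartite and $\G_2$ bipartite with classes $X_2\sqcup Y_2$, every $\G_2$-direction edge is positive (adjacent vertices of bipartite $\G_2$ share no neighbor, so item (1) applies), whence every $\G_1$-direction edge is negative by item (3). Thus the partition $U=\G_1\times X_2$, $V=\G_1\times Y_2$ satisfies the hypotheses of \cref{thm:nbrly-mu}, the sign relation extending to all pairs along paths as in \cref{prop:c-on-pair}. I would then compute $\delta$ and $\kappa$ directly: a closed neighborhood $N[(u,i)]$ contributes $1+\deg u-\deg i$ to $\delta$; same-$\G_2$-coordinate intersections contribute $|N_{\G_1}[u]\cap N_{\G_1}[v]|$ (all in one class), same-$\G_1$-coordinate intersections contribute $|N_{\G_2}[i]\cap N_{\G_2}[j]|$ for $d(i,j)=2$, and the diagonal intersections contribute $0$. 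This yields exactly $\mu=\gcd(\delta,\kappa_1,\kappa_2)$.

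For Part 1, items (1) and (2) show every edge is positive, so the product is positively neighborly, and item (3) makes each $\G_1$-direction edge also negative, giving $2\bbe_{(u,i)}\in\Rg$; by \cref{thm:half-ra} the graph is $1/2$-RA or RA. Its ``furthermore'' clause characterizes $1/2$-RA by the product having all vertices of odd degree and all pairs with an even number of common neighbors, which I would translate through $\deg(u,i)=\deg u+\deg i$ and the same-fiber and diagonal intersection sizes into the stated condition (opposite-parity degrees and an even number of common neighbors in each factor). The step I expect to be the main obstacle is pinning down the edge signs, and in particular the odd-walk telescoping of item (2); getting exactly right which diagonal produces which relation, and the parity bookkeeping around the walk, is where the real work lies. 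A minor point to dispatch first is the degenerate case in which a factor is a single vertex (the product is then the other factor) and the assumption that every vertex has a neighbor, needed for item (3).
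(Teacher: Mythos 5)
Your proposal is correct and follows essentially the same route as the paper: both rest on the diagonal identity $N[(u,a)]\cap N[(v,b)]=\{(u,b),(v,a)\}$ for $u\sim v$, $a\sim b$, a telescoping argument to determine edge signs, and then \cref{thm:half-ra} for Part 1 and \cref{thm:nbrly-mu} (with the partition induced by the bipartition of $\G_2$ and the same $\delta,\kappa_1,\kappa_2$ bookkeeping) for Part 2. The one difference is minor and dual rather than substantive: the paper fixes the sign of a $\G_1$-direction edge by telescoping along even walks \emph{within} $\G_1$ (negative iff $\G_1$ is non-bipartite), whereas you telescope along odd closed walks in the \emph{other} factor $\G_2$; both are valid instances of the same trick and produce the same sign pattern in every case the theorem needs.
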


\begin{proof}
Let $\G = \G_1 \bprod \G_2$. 
Consider neighbors $(u, i)$ and $(v, j)$ of $\G_1 \bprod \G_2$. Then either $u = v$ or $i=j$. Let us assume that $i = j = 1$. If $\G_1$ is bipartite, then $u$ and $v$ have no common neighbors, so $\bbe_{(u,1)} + \bbe_{(v,1)} \in \Rg$. If $\G_1$ is not bipartite, let $w$ be a neighbor of $v$ other than $u$, and let $2$ be a neighbor of $1$. (If no such neighbor $w$ exists, then again $N[u] \cap N[v] = \{u,v\}$ and thus $\bbe_{(u,1)} + \bbe_{(v,1)} \in \Rg$.) Then
\[ (\vec N[u,2] \cap \vec N[v,1]) - (\vec N[v,1] \cap \vec N[w,2]) = (\bbe_{(u,1)} + \bbe_{(v,2)}) - (\bbe_{(v,2)} + \bbe_{(w,1)}) = \bbe_{(u,1)} - \bbe_{(w,1)}.\]
Thus, whenever $u$ and $w$ are vertices connected by an even path in $\G_1$, we have $\bbe_{(u,1)} - \bbe_{(w,1)} \in \Rg$. Since $\G_1$ is not bipartite, every pair of vertices is connected by an even path, and so $\bbe_{(u,1)} - \bbe_{(w,1)} \in \Rg$ for \emph{all} vertices $u$ and $w$ of $\G_1$. Applying the same argument to $\G_2$ shows that every $\bbe_{(u,i)} + \bbe_{(u,j)} \in \Rg$ if $\G_2$ is bipartite, and otherwise every $\bbe_{(u,i)} - \bbe_{(u,j)} \in \Rg$. It follows that $\G$ is neighborly. 

By the arguments above, if $\G_1$ and $\G_2$ are both not bipartite, then $\G$ is negatively neighborly. Similarly, if $\G_1$ and $\G_2$ are both bipartite, then $\G$ is positively neighborly. In either case, since $|N[u,1] \cap N[v,2]| = 2$, \cref{cor:pos-nbr-1/mu} or \cref{thm:neg-nbr-1/mu} says that $\G$ is either $1/2$-RA or RA. Furthermore, by \cref{thm:half-ra}, $\G$ is half-RA if and only if every vertex has odd degree and every pair of vertices has an even number of common neighbors. Since $\deg(u,i) = \deg(u) + \deg(i)$, every vertex has odd degree if and only if every vertex of $\G_1$ (resp. $\G_2$) has odd degree and every vertex of $\G_2$ (resp. $\G_1$) has even degree. Now, two vertices $(u,i)$ and $(v,j)$ of $\G$ have common neighbors if and only if they agree in one coordinate or $u$ is a neighbor of $v$ and $i$ is a neighbor of $j$. We already accounted for the latter case above (such vertices have two common neighbors), so every pair of vertices will have an even number of common neighbors if and only if that is true of both $\G_1$ and $\G_2$.

Now suppose that $\G_1$ is not bipartite and $\G_2$ is. By the arguments above, if $i$ and $j$ are connected by a path of length $k$ in $\G_2$, then $\bbe_{(u,i)} + (-1)^{k-1} \bbe_{(v,j)} \in \Rg$ for every pair of vertices $u$ and $v$ in $\G_1$. The bipartition of $\G_2$ induces a partition $U \sqcup V$ of $\G_1 \bprod \G_2$ by simply ignoring the first coordinate of each vertex, and this partition satisfies the conditions of \cref{thm:nbrly-mu}. Each vertex $(u,i)$ has $\deg(u) + \deg(i)$ neighbors, with $\deg(u)$ of them in the same part (with the same second coordinate) and $\deg(i)$ of them in the opposite part (with the same first coordinate). That shows that $\delta$ as defined here is the same as the $\delta$ in \cref{thm:nbrly-mu}. 

It remains to compute $\kappa$ from \cref{thm:nbrly-mu}. 
Consider vertices $(u,i) \neq (v,j)$ with common neighbors. If they do not agree in either coordinate, then $u$ and $v$ are neighbors, $i$ and $j$ are neighbors, and $N[u,i] \cap N[v,j] = \{(u,j), (v,i)\}$. The two vertices in this intersection lie in different parts, and so they contribute a 0 to the set whose gcd is $\kappa$, leaving it unaffected. Thus, it suffices to consider vertices that agree in one coordinate. The vertices of $N[u,i] \cap N[v,i]$ all have second coordinate $i$ and thus all lie in the same part, so the contribution of this set to $\kappa$ is just $|N[u,i] \cap N[v,i]| = |N[u] \cap N[v]|$. Now, suppose that $(u,i)$ and $(u,j)$ have common neighbors. If they are adjacent, then since $\G_2$ is bipartite (with bipartition induced by $U \sqcup V$), they are the only vertices of $N[u,i] \cap N[u,j]$, which thus has one vertex in $U$ and one in $V$, contributing nothing to $\kappa$. Thus we may assume that $(u,i)$ and $(u,j)$ are non-adjacent, with all their common neighbors lying in the same part as each other. So again, the contribution of this set to $\kappa$ is just $|N[u,i] \cap N[u,j]| = |N[i] \cap N[j]|$. Thus, $\kappa = \gcd(\kappa_1, \kappa_2)$, and the result follows.
\end{proof}

\begin{corollary} \label{cor:prism-1/mu}
If $\G$ is not bipartite, then the prism $\G \bprod K_2$ is $1/\mu$-RA, where $\mu = \gcd(\delta, \kappa)$ with
\[ \delta = \gcd(\{\deg(v) : v \in \G\}), \qquad \kappa = \gcd(\{|N[u] \cap N[v]| : u,v \in \G\}). \]
\end{corollary}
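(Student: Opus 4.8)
The plan is to obtain \cref{cor:prism-1/mu} directly as the instantiation of case (2) of \cref{thm:bprod-1/mu} with $\G_1 = \G$ and $\G_2 = K_2$. Since $\G$ is not bipartite and $K_2$ is bipartite, the hypotheses of that case are exactly met (and the prism $\G \bprod K_2$, being a cartesian product of connected graphs, is itself connected and neighborly). Thus \cref{thm:bprod-1/mu} already yields that $\G \bprod K_2$ is $1/\mu$-RA with $\mu = \gcd(\delta', \kappa_1', \kappa_2')$; all that remains is to check that each of these three quantities collapses to the data named in the corollary.

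First I would evaluate $\delta'$. Every vertex of $K_2$ has degree $1$, so for $u \in \G$ and $i \in K_2$ the quantity $1 + \deg(u) - \deg(i)$ is just $\deg(u)$; hence $\delta' = \gcd(\{\deg(u) : u \in \G\})$, which is the corollary's $\delta$. Next, $\kappa_1' = \gcd(\{|N[u] \cap N[v]| : u, v \in \G,\ u \ne v\})$ is verbatim the corollary's $\kappa$. The only quantity needing a remark is $\kappa_2'$: it is a gcd taken over pairs $i, j$ of $K_2$ with $d(i,j) = 2$, but the two vertices of $K_2$ are adjacent and hence at distance $1$, so this index set is empty. Using the convention $\gcd(\emptyset) = 0$, the term $\kappa_2'$ contributes nothing to the overall gcd, and $\mu = \gcd(\delta', \kappa_1', 0) = \gcd(\delta, \kappa)$, as claimed.

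There is no real obstacle here beyond bookkeeping, since the entire content lives in \cref{thm:bprod-1/mu}. The two points I would take care to verify are (i) the empty-gcd convention for $\kappa_2'$, which is what lets the second factor of $K_2$ drop out cleanly, and (ii) the non-degeneracy hypothesis of \cref{thm:nbrly-mu} that $\delta$ and $\kappa$ are not both zero. The latter is automatic here: a non-bipartite graph contains an odd cycle and therefore has vertices of positive degree, forcing $\delta > 0$, so the appeal to \cref{thm:nbrly-mu} (and hence the conclusion that the prism is genuinely $1/\mu$-RA rather than having a zero elementary divisor) is valid.
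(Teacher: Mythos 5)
Your proposal is correct and is exactly the paper's (implicit) proof: the corollary is stated without proof as the direct specialization of case (2) of \cref{thm:bprod-1/mu} to $\G_2 = K_2$, with $\delta' = \gcd(\{\deg(u)\})$, $\kappa_1'$ equal to the corollary's $\kappa$ (read, as intended, over distinct pairs $u \neq v$), and the distance-2 term $\kappa_2'$ vanishing because $K_2$ has no such pairs. Your two extra checks --- the empty-gcd convention and the nondegeneracy $\delta > 0$ needed for \cref{thm:nbrly-mu} --- are sound and only make the bookkeeping more explicit than the paper's.
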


\begin{corollary} \label{cor:girth-5}
If $\G_1$ has non-adjacent vertices $u$ and $v$ that have a unique common neighbor, then $\G_1 \bprod \G_2$ is RA. In particular, if $\G_1$ has girth 5 or more and $|\G_1| > 2$, then $\G_1 \bprod \G_2$ is RA.
\end{corollary}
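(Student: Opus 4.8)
The plan is to combine the structural result for Cartesian products with the row-space characterization of RA. By \cref{thm:bprod-1/mu}, $\G := \G_1 \bprod \G_2$ is neighborly and $1/\mu$-RA for some $\mu$, so by \cref{prop:1/mu-alt-char} it suffices to produce a single standard basis vector $\bbe_x \in \Rg$; any such vector forces $\mu = 1$, which is exactly the statement that $\G$ is RA. The whole argument thus reduces to finding a row of $C_\G$ that is a standard basis vector.

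To find one, fix any $i \in \G_2$ and look at the distinct product vertices $(u,i)$ and $(v,i)$, where $u, v$ are the non-adjacent vertices of $\G_1$ with a unique common neighbor $w$. Using $N[a,b] = (\{a\} \times N[b]) \cup (N[a] \times \{b\})$, a short check of the four pieces of $N[u,i]$ and $N[v,i]$ shows that all cross-terms vanish precisely because $u \ne v$ are non-adjacent, leaving
\[ N[u,i] \cap N[v,i] = (N[u] \cap N[v]) \times \{i\}. \]
Since $u \not\sim v$, the set $N[u] \cap N[v]$ consists exactly of the common neighbors of $u$ and $v$ (the endpoints themselves drop out), which by hypothesis is $\{w\}$. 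Hence $N[u,i] \cap N[v,i] = \{(w,i)\}$, and therefore $\vec N[u,i] \cap \vec N[v,i] = \bbe_{(w,i)}$ is a row of $C_\G$, so $\bbe_{(w,i)} \in \Rg$ and $\G$ is RA.

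For the ``in particular'' claim I would reduce to the hypothesis just used. Since we assume $\G_1$ is connected, $|\G_1| > 2$ guarantees a vertex $w$ of degree at least $2$; choose two distinct neighbors $u, v$ of $w$. Girth at least $5$ forbids the triangle through $u, v, w$, so $u$ and $v$ are non-adjacent, and forbids any $4$-cycle $u - w - v - w' - u$, so $w$ is the \emph{unique} common neighbor of $u$ and $v$. Thus the first part applies and $\G_1 \bprod \G_2$ is RA.

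I expect no genuine obstacle here: the only steps needing care are the neighborhood-intersection computation --- in particular checking that the two ``mixed'' pieces $\{u\} \times N[i]$ against $N[v] \times \{i\}$ (and symmetrically) contribute nothing exactly because $u$ and $v$ are distinct and non-adjacent --- and the elementary degree/connectivity argument used to extract the pair $u, v$ in the girth-$5$ case.
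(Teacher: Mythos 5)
Your proof is correct, but it takes a genuinely different route from the paper's. The paper's proof stays entirely inside the case analysis of \cref{thm:bprod-1/mu}: when $\G_1$ and $\G_2$ are both bipartite or both non-bipartite, your pair $u,v$ with exactly one common neighbor violates the ``even number of common neighbors'' criterion for being $1/2$-RA, and in the mixed case the same pair forces $\kappa_1 = 1$ (or, after relabeling, $\kappa_2 = 1$); either way $\mu = 1$. You instead bypass the gcd formulas altogether: you exhibit the explicit row $\vec N[u,i] \cap \vec N[v,i] = \bbe_{(w,i)}$ of $C_{\G_1 \bprod \G_2}$, and then use only the coarse structural content of \cref{thm:bprod-1/mu} (the product is neighborly, hence almost RA) together with \cref{prop:1/mu-alt-char} to force $\mu = 1$. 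Your computation $N[u,i] \cap N[v,i] = (N[u]\cap N[v]) \times \{i\}$ is exactly the fact underlying the paper's ``$\kappa_1 = 1$,'' so the two arguments share the same kernel, but yours is uniform (no bipartiteness cases, no relabeling) and makes the mechanism transparent: a standard basis vector literally appears as a row of the RA matrix. One small streamlining is available to you: rather than quoting \cref{prop:1/mu-alt-char}, which presupposes that the product is already known to be $1/\mu$-RA, you can feed $\bbe_{(w,i)} \in \Z^{C_{\G_1 \bprod \G_2}}$ with $a=1$ directly into \cref{prop:nbrly-is-almost-ra}, which needs only neighborliness; that removes any dependence on the case structure of \cref{thm:bprod-1/mu}. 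Finally, your reduction for the girth-$5$ statement (connectedness and $|\G_1| > 2$ give a vertex $w$ of degree at least $2$; girth at least $5$ rules out the triangle and the $4$-cycle, making $w$ the unique common neighbor of two of its neighbors) is the intended argument, which the paper leaves implicit.
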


\begin{proof}
If $\G_1$ and $\G_2$ are both bipartite or both not bipartite, then this follows directly from the first case of \cref{thm:bprod-1/mu}. Otherwise, in the second case, we see that $\kappa_1 = 1$ or, if $\G_1$ is bipartite and $\G_2$ is not, then we relabel them and $\kappa_2 = 1$. In each case, $\G_1 \bprod \G_2$ is RA.
\end{proof}

\begin{corollary} \label{cor:girth-4}
If $\G_1$ and $\G_2$ are both girth $3$ or both girth $4$, then $\G_1 \bprod \G_2$ is either $1/2$-RA or RA.
\end{corollary}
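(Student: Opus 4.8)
The plan is to deduce this directly from \cref{thm:bprod-1/mu} by a case analysis on the bipartiteness of the two factors, using two elementary facts about girth: a graph of girth $3$ contains a triangle and is therefore not bipartite, whereas a graph of girth $4$ is triangle-free. The point of the corollary is that in every case we land in a situation where the relevant $\mu$ satisfies $\mu \le 2$.

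First I would dispose of the cases where $\G_1$ and $\G_2$ have the same bipartiteness status. If both factors have girth $3$, then both are non-bipartite, so we are in case (1) of \cref{thm:bprod-1/mu} and the conclusion ($1/2$-RA or RA) is immediate. If both have girth $4$ and are \emph{either} both bipartite \emph{or} both non-bipartite, we are again in case (1), with the same conclusion. The key organizational observation is that ``both girth $3$'' can never produce mixed bipartiteness (both factors being non-bipartite), so the only genuinely new possibility is two girth-$4$ graphs of opposite bipartite type.

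So suppose both factors have girth $4$, with (after relabeling) $\G_1$ non-bipartite and $\G_2$ bipartite, placing us in case (2) of \cref{thm:bprod-1/mu}, where $\mu = \gcd(\delta, \kappa_1, \kappa_2)$. The essential step is to show $\kappa_1 \mid 2$, since then $\mu \mid \kappa_1 \mid 2$ forces $\mu \in \{1,2\}$, i.e.\ $\G_1 \bprod \G_2$ is RA or $1/2$-RA. Because $\G_1$ has girth $4$ it is triangle-free, so for any edge $u - v$ of $\G_1$ the only vertices of $N[u] \cap N[v]$ are $u$ and $v$ themselves, giving $|N[u] \cap N[v]| = 2$. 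As $\G_1$ is non-bipartite it certainly has an edge, so the value $2$ appears among the quantities whose gcd defines $\kappa_1$; hence $\kappa_1 \mid 2$, and in particular $\kappa_1 \neq 0$, so the $1/\mu$-RA conclusion of \cref{thm:bprod-1/mu} is genuine rather than degenerate.

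I expect the only real subtlety to be organizational rather than computational: one must notice that the hypothesis ``both girth $3$'' already forces case (1) of \cref{thm:bprod-1/mu}, so the single scenario requiring work is two girth-$4$ graphs of opposite bipartiteness, and that this lone case is dispatched by the triangle-free observation applied to the non-bipartite factor. No delicate estimate on $\delta$ or $\kappa_2$ is needed, since a single edge already pins $\kappa_1$ down to a divisor of $2$.
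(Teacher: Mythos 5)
Your proposal is correct and follows essentially the same route as the paper's proof: reduce everything to \cref{thm:bprod-1/mu}, observe that the only case not covered by part (1) is two girth-$4$ graphs of opposite bipartiteness, and dispatch that case by noting that triangle-freeness forces $|N[u] \cap N[v]| = 2$ for any edge $u-v$ of the non-bipartite factor, so $\kappa_1 \mid 2$. Your added remarks (the explicit relabeling and the observation that $\kappa_1 \neq 0$ rules out degeneracy) are sound refinements of the same argument rather than a different approach.
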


\begin{proof}
This follows immediately from the statement of \cref{thm:bprod-1/mu} except when $\G_1$ is not bipartite and $\G_2$ is (and both are girth $4$). Then $\kappa_1 \leq 2$ since for adjacent vertices $u$ and $v$ in a girth 4 graph, $|N[u] \cap N[v]| = 2$.
\end{proof}

Together \cref{cor:girth-5,cor:girth-4} imply that the only way to get $\mu > 2$ in \cref{thm:bprod-1/mu} is if either $\G_2 = K_2$ or if $\G_1$ has girth 3 and $\G_2$ has girth 4.

Recall that for every group $G$, the group $G^{K_n}$ is the diagonal subgroup of $G^n$, isomorphic to just $G$ itself. In particular, $K_n$ is as far from being RA as possible -- $C_{K_n}$ has only a single nontrivial elementary divisor. It is perhaps surprising then that the cartesian product of complete graphs is always $1/2$-RA or RA:

\begin{corollary} \label{kn-prods-ra}
    Let $K = K_{n_1} \bprod K_{n_2} \bprod \cdots \bprod K_{n_m}$, with $m \ge 2$ and each $n_i \geq 2$. Then $K$ is $1/2$-RA or RA, and it is RA if and only if $m$ is even or at least one $n_i$ is odd. 
\end{corollary}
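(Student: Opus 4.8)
The plan is to split the statement into two parts: first establish that $K$ is $1/2$-RA or RA (i.e.\ that its single nontrivial elementary divisor $\mu$ satisfies $\mu \le 2$), and then use the criterion in \cref{thm:half-ra} to decide exactly when $\mu = 1$ (RA) versus $\mu = 2$ ($1/2$-RA). Throughout I would write $a$ for the number of factors equal to $K_2$ and $b$ for the number of factors $K_{n_i}$ with $n_i \ge 3$, so $a + b = m \ge 2$; note $K$ is bipartite exactly when $b = 0$. Since any cartesian product is neighborly by \cref{thm:bprod-1/mu}, the machinery of \cref{sec:almost-RA} applies.

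For the first part, I would use the associativity of $\bprod$ to regroup $K$ as a product of two graphs and invoke \cref{thm:bprod-1/mu}. If $b = 0$ then $K$ regroups as a product of two bipartite graphs, and if $a = 0$ then $K$ regroups as a product of two non-bipartite graphs; in either case part (1) of \cref{thm:bprod-1/mu} gives $\mu \le 2$ immediately. The interesting case is $a,b \ge 1$, where I would write $K = N \bprod Q_a$ with $N$ the product of the non-bipartite factors, and apply part (2) of \cref{thm:bprod-1/mu} with $\G_1 = N$ and $\G_2 = Q_a$. Here the key inputs are the Hamming-distance structure of these products: adjacent vertices differing in coordinate $i$ have $n_i - 2$ common neighbors, distance-$2$ vertices have exactly $2$, and farther vertices have none. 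Consequently $\kappa_2 = 2$ whenever $a \ge 2$ (from distance-$2$ pairs in $Q_a$) and $\kappa_1$ divides $2$ whenever $b \ge 2$ (from distance-$2$ pairs in $N$), either of which forces $\mu \le 2$. The one remaining subcase $a = b = 1$ is the prism $K_{n_1} \bprod K_2$, which is RA by \cref{cor:prism-1/mu} (there $\mu = \gcd(n_1 - 1, n_1) = 1$).

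For the second part, since $K$ is now known to be $1/2$-RA or RA, \cref{thm:half-ra} says $K$ is $1/2$-RA precisely when every vertex has odd degree and every pair of vertices has an even number of common neighbors. Each vertex has degree $\sum_i (n_i - 1)$, which is odd exactly when the number of even $n_i$ is odd. Using the common-neighbor counts above ($n_i - 2$, $2$, or $0$), every pair has an even number of common neighbors exactly when every $n_i$ is even. Combining these, $K$ is $1/2$-RA if and only if all $n_i$ are even and $m$ is odd, which is the negation of the claimed condition; hence $K$ is RA if and only if $m$ is even or some $n_i$ is odd.

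The main obstacle I anticipate is the bookkeeping in the first part: one must correctly regroup the product, verify that $N$ and $Q_a$ fall into the right case of \cref{thm:bprod-1/mu}, and carefully compute $\kappa_1$ and $\kappa_2$ in the mixed case. In particular, the configuration $N \bprod Q_a$ with $a \ge 2$ is exactly the ``girth $3$ times girth $4$'' situation flagged in the discussion following \cref{cor:girth-4} as the one place $\mu > 2$ can occur, so it is essential to check that the relevant distance-$2$ intersections still have size $2$, keeping $\mu \le 2$; and one must not overlook the lone subcase $a = b = 1$, where neither $\kappa_1$ nor $\kappa_2$ alone suffices and the prism computation is needed.
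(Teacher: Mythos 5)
Your proof is correct, but it takes a recognizably different route from the paper's, so a comparison is worthwhile. The paper sorts the factors so that $n_1 \ge \cdots \ge n_m$ and splits into three cases by how many factors exceed $2$: if none, then $K = Q_m$ and it quotes the known cube-graph result; if exactly one, it applies part (2) of \cref{thm:bprod-1/mu} to $K_{n_1} \bprod Q_{m-1}$ and computes $\mu = \gcd(n_1 - m + 1,\, n_1,\, 2)$ explicitly; if at least two, it applies part (1) to $K_{n_1} \bprod (K_{n_2} \bprod \cdots \bprod K_{n_m})$ and reads off the parity criterion stated there. That is, the paper resolves the RA versus $1/2$-RA dichotomy \emph{inside each case} with case-specific formulas. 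You instead decouple the two claims: you first prove $\mu \le 2$ in every case (under a different case split --- both parts bipartite, both non-bipartite, or mixed, with the leftover prism $K_{n_1} \bprod K_2$ handled by \cref{cor:prism-1/mu}), and then settle the dichotomy uniformly by applying \cref{thm:half-ra} directly to the whole product $K$, computing vertex degrees $\sum_i (n_i - 1)$ and common-neighbor counts ($n_i - 2$, $2$, or $0$) in $K$ itself. This buys a unified second half: the cube case falls out of the same computation rather than being cited, and every case gets identical treatment. It also handles the $m = 2$ subcase more carefully than the paper does: when the bipartite factor is $K_2$ there are no distance-$2$ pairs, so the paper's assertion that $\kappa_2 = 2$ is vacuous there (harmlessly so, since $\gcd(\delta, \kappa_1) = \gcd(n_1 - 1, n_1) = 1$ already), whereas you route that subcase through the prism corollary. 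The cost is a slightly longer argument and the need to check that the hypotheses of \cref{thm:half-ra} (neighborliness together with being $1/2$-RA or RA) hold for the full product, which you do.
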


\begin{proof}
Up to rearrangement, we may assume that $n_1 \geq n_2 \geq \cdots \geq n_m$.

First, suppose that each $n_i = 2$. Then $K$ is the cube graph $Q_m$, which is RA if and only if $m$ is even.

Next, suppose that $n_1 > 2$ and $n_2 = n_3 = \cdots = n_m = 2$. Then $K_{n_1}$ is not bipartite and $(K_{n_2} \bprod \cdots \bprod K_{n_m})$ is. Applying \cref{thm:bprod-1/mu} gives us $\delta = n_1 - m + 1$, $\kappa_1 = n_1$, and $\kappa_2 = 2$. Thus $\mu = \gcd(\delta, \kappa_1, \kappa_2)$ will be $2$ if and only if $n_1$ is even and $m$ is odd, and otherwise $\mu = 1$.

Finally, suppose that $n_1 > 2$ and $n_2 > 2$. Then $K_{n_1}$ and  $(K_{n_2} \bprod \cdots \bprod K_{n_m})$ are both not bipartite, so $K$ is $1/2$-RA or RA. Let $K' = K_{n_2} \bprod \cdots \bprod K_{n_m}$. The degree of each vertex of $K_{n_1}$ is $n_1 - 1$, and the degree of each vertex of $K'$ is $\sum_{i=2}^m n_i - (m-1)$. Every pair of vertices of $K_{n_1}$ has $n_1$ common neighbors (including each other), and some pairs of vertices of $K'$ have $2$ common neighbors, with others having $n_i$ common neighbors for some $2 \leq i \leq m$. Thus, $K$ will be $1/2$-RA if and only if every $n_i$ is even and $m$ is odd.
\end{proof}

\begin{example}
    \label{eg:CylTori}
    Let us consider the higher-dimensional analogue of grids / cylinders / tori
    \[\G = P_{m_1} \bprod \cdots \bprod P_{m_k} \bprod C_{n_1} \bprod \cdots \bprod C_{n_t},\] where $k$ or $t$ could be 0 and all $n_j$ are equal to 3 or at least 5 ($C_2 = P_2$ and $C_4 = P_2 \bprod P_2$ are lumped in with the path graphs). Paths with at least 3 vertices and cycles $C_n$ with $n \ge 5$ have girth at least 5, so if some $m_i \ge 3$ or some $n_j \ge 5$ then $\G$ is RA by \cref{cor:girth-5}. Otherwise, each $m_i = 2$ and each $n_j = 3$ so we have a cartesian product of complete graphs and \cref{kn-prods-ra} tells us $\G$ is RA if and only if $t \ge 1$ or $t = 0$ (so $\G = Q_k$) and $k$ is even. Thus, the only case when such a graph $\G$ is not RA is when $\G = Q_k$ with $k$ odd, and then $\G$ is $1/2$-RA.
\end{example}

Let us now show one fun example of an infinite family of non-RA graphs built from Kneser graphs.

\begin{example} \label{Kneser-not-RA}
Consider the prism $\Kn(n,k) \bprod K_2$. Let us show that, for infinitely many choices of $n$ and $k$, the result is $1/3$-RA. To do so, we will use \cref{cor:prism-1/mu} and prove that $\mu = 3$. 

The degree of any vertex of the Kneser graph $\Kn(n, k)$ is ${n-k \choose k}$. Any two non-adjacent vertices correspond to subsets of $\{1, 2, \dots, n\}$ that share $j$ elements, with $1 \le j \le k-1$, and the number of vertices adjacent to such two vertices is ${n-2k+j \choose k}$. Any two adjacent vertices correspond to disjoint subsets of $\{1, 2, \dots, n\}$ and have ${n - 2k \choose k}$ vertices in common (other than themselves). Thus, we are looking for pairs $(n, k)$ such that 
\begin{enumerate}[(a)]
    \item For any $j = 1, \dots, k$ we have ${n - 2k + j \choose k} \equiv 0 \pmod 3$ and
    \item ${n - 2k \choose k} \equiv 1 \pmod 3$.
\end{enumerate}

The key point will be the self-similarity of Pascal's Triangle $\pmod p$ for a prime $p$ (see \cref{Pascal10rows-mod3} and, e.g. \cite{Granville2019} and \cite{Kubelka2004}): take $\Delta$ = the first $p$ rows of Pascal's triangle and replace each entry by that entry multiplied by $\Delta$, putting in upside-down triangles of 0's in between the resulting triangles to get the first $p^2$ rows, then do the same, either multiplying the entries of the first $p$ rows by the entire $p^2$ first rows or multiplying the entries of the first $p^2$ rows by $\Delta$, to find the first $p^3$ rows, and so on. See \cref{Pascal10rows-mod3}.

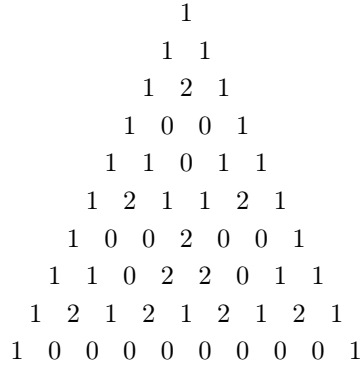
\begin{figure}[hbtp]
    \centering
\begin{tikzpicture}[scale=0.5]
    \node at (0, 0) {1};
    
    \node at (-0.5, -1) {1};
    \node at (0.5, -1) {1};
    
    \node at (-1, -2) {1};
    \node at (0, -2) {2};
    \node at (1, -2) {1};
    
    \node at (-1.5, -3) {1};
    \node at (-0.5, -3) {0};
    \node at (0.5, -3) {0};
    \node at (1.5, -3) {1};
    
    \node at (-2, -4) {1};
    \node at (-1, -4) {1};
    \node at (0, -4) {0};
    \node at (1, -4) {1};
    \node at (2, -4) {1};
    
    \node at (-2.5, -5) {1};
    \node at (-1.5, -5) {2};
    \node at (-0.5, -5) {1};
    \node at (0.5, -5) {1};
    \node at (1.5, -5) {2};
    \node at (2.5, -5) {1};
    
    \node at (-3, -6) {1};
    \node at (-2, -6) {0};
    \node at (-1, -6) {0};
    \node at (0, -6) {2};
    \node at (1, -6) {0};
    \node at (2, -6) {0};
    \node at (3, -6) {1};
    
    \node at (-3.5, -7) {1};
    \node at (-2.5, -7) {1};
    \node at (-1.5, -7) {0};
    \node at (-0.5, -7) {2};
    \node at (0.5, -7) {2};
    \node at (1.5, -7) {0};
    \node at (2.5, -7) {1};
    \node at (3.5, -7) {1};
    
    \node at (-4, -8) {1};
    \node at (-3, -8) {2};
    \node at (-2, -8) {1};
    \node at (-1, -8) {2};
    \node at (0, -8) {1};
    \node at (1, -8) {2};
    \node at (2, -8) {1};
    \node at (3, -8) {2};
    \node at (4, -8) {1};
    
    \node at (-4.5, -9) {1};
    \node at (-3.5, -9) {0};
    \node at (-2.5, -9) {0};
    \node at (-1.5, -9) {0};
    \node at (-0.5, -9) {0};
    \node at (0.5, -9) {0};
    \node at (1.5, -9) {0};
    \node at (2.5, -9) {0};
    \node at (3.5, -9) {0};
    \node at (4.5, -9) {1};
    
\end{tikzpicture}
    \caption{Rows 0 - 9 of Pascal's Triangle $\pmod 3$}
    \label{Pascal10rows-mod3}
\end{figure}    

    Note that for any $a \ge 0$, the $3^{a+1}$-th row ($1,0,\dots,0,1$) marks the start of an upside down triangle of zeroes of size $3^{a+1} - 1$ (in width and height), so the zeroes extend from the ${3^{a+1} \choose r}$ entry down-left to the ${3^{a+1} + r-1 \choose r}$ entry (inclusive) for $r = 1, 2, \dots, 3^{a+1} - 1$. Then if $n = 3^{a+1} + 2k - 1$ and $k < 3^{a+1}$, we get
    \[ {n-2k+j \choose k} = {3^{a+1} + j - 1 \choose k} \equiv 0 \pmod 3\]
    for each $j$ with $1 \leq j \leq k$. This proves (a) above. Then note that
    \[ {n - 2k \choose k} = {3^{a+1} - 1 \choose k},\]
    which we want to be $1$. The $(3^a - 1)$st row follows the alternating pattern $1, 2, 1, 2, \dots, 2, 1$, meaning the entry before the zeroes start for even $k$ is 1. So, picking $k$ to be an even number between $3^a$ and $3^{a+1}$ satisfies (b). In other words, picking any nonnegative integers $a$ and $b$, we can set $k = 3^a + 1 + 2b$ and $n = 3^{a+1} + 2k - 1$, and then $\Kn(n,k)$ has properties (a) and (b). In fact, the numbers in column $k$ repeat with period $3^{a+1}$ so the statements above still hold if we add $3^{a+1}i$ to $n$ for any nonnegative integer $i$.

    \begin{example}
        Prisms over complements of Kneser graphs serve as an example of non-RA graphs as well. Let $n \ge 3$, $K$ be the complement of $\Kn(n, 2)$ and $\G = K \bprod K_2$. Then $\G$ is 1/4-RA if $n \equiv 0 \pmod 4$, 1/2-RA if $n \equiv 2 \pmod 4$, and RA if $n$ is odd. Indeed,
        \begin{enumerate}[(a)]
            \item The degree of a vertex, say $\{1, 2\}$, is $2(n-2)$ since it has adjacent vertices $\{1, j\}$ and $\{2, j\}$ for $3 \le j \le n$.
            \item The intersection $N(u) \cap N(v)$ for two adjacent vertices, say $\{1, 2\}$ and $\{1, 3\}$, has $n$ vertices: $u$ and $v$ themselves, $\{2, 3\}$, and $\{1, j\}$ for $4 \le j \le n$.
            \item The intersection $N(u) \cap N(v)$ for two non-adjacent vertices, say $\{1, 2\}$ and $\{3, 4\}$, has exactly 4 vertices: $\{1, 3\}, \{1, 4\}, \{2, 3\}, \{2, 4\}$.
        \end{enumerate}
        Taking the gcd of these numbers and using \cref{cor:prism-1/mu} completes the proof.
    \end{example}


    \end{example}

\subsection{Tensor Products}
\label{subsec:tensor_prod}

The tensor product $\G_1 \times \G_2$ of two graphs $\G_1$ and $\G_2$ is the graph whose vertices again are $(u_1, u_2)$ with $u_1 \in \G_1, u_2 \in \G_2$, and where $(u_1, u_2)$ is adjacent to $(v_1, v_2)$ if and only if $u_1$ is adjacent to $v_1$ \textit{and} $u_2$ is adjacent to $v_2$. It is well-known that the tensor product of two connected graphs is disconnected if and only if they are both bipartite; first, we will deal with the case that one of the graphs is bipartite, then the case where both are, and last with the case that neither are.

\begin{theorem}
\label{thm:tensor-bipartite}
    Let $\G$ be a non-bipartite graph and $\Lambda = \Lambda_1 \sqcup \Lambda_2$ a bipartite graph. Then $\G \times \Lambda$ is $1/\mu$-RA with $\mu = \gcd(\delta, \kappa)$, where
    \[\delta = \gcd\{\deg(v)\deg(\lambda)-1 : v \in \G, \lambda \in \Lambda\} \textrm{ and}\]
    \[\kappa = \gcd\{|N(u) \cap N(v)| \cdot |N(\lambda_1) \cap N(\lambda_2)|: u, v \in \G, \lambda_1, \lambda_2 \in \Lambda_i, i = 1, 2, (u, \lambda_1) \ne (v, \lambda_2)\}.\] 
    In particular, $\G \times K_2$ is $1/\mu$-RA, where $\mu = \gcd(\delta, \kappa)$ with
    \[\delta = \gcd\{\deg(v) - 1 : v \in \G\}, \qquad \kappa = \gcd\{|N(u) \cap N(v)| : u, v \in \G\}.\]
\end{theorem}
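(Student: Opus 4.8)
The plan is to recognize $\G \times \Lambda$ as a neighborly graph and deduce the whole statement from \cref{thm:nbrly-mu}, using the partition $U = \G \times \Lambda_1$ and $V = \G \times \Lambda_2$ induced by the bipartition of $\Lambda$. The crucial preliminary step, which I expect to carry the entire argument, is a clean description of closed-neighborhood intersections in a tensor product. Writing $N(u,\lambda) = N(u) \times N(\lambda)$ for the open neighborhood, the closed neighborhood is $N[(u,\lambda)] = \{(u,\lambda)\} \cup (N(u) \times N(\lambda))$, and for distinct vertices one computes
\[ N[(u,\lambda_1)] \cap N[(v,\lambda_2)] = E \cup \big((N(u)\cap N(v)) \times (N(\lambda_1)\cap N(\lambda_2))\big), \]
where $E = \{(u,\lambda_1),(v,\lambda_2)\}$ if the two vertices are adjacent and $E = \emptyset$ otherwise.

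First I would use this to show $\G \times \Lambda$ is positively neighborly. If $(u,\lambda_1) \sim (v,\lambda_2)$, then $\lambda_1 \sim \lambda_2$ in the bipartite graph $\Lambda$, so $\lambda_1$ and $\lambda_2$ lie in opposite parts and $N(\lambda_1)\cap N(\lambda_2) = \emptyset$. Hence the intersection above collapses to exactly $\{(u,\lambda_1),(v,\lambda_2)\}$, so $\vec N[(u,\lambda_1)] \cap \vec N[(v,\lambda_2)] = \bbe_{(u,\lambda_1)} + \bbe_{(v,\lambda_2)}$ is a row of $C_\G$, and every edge is positive. Since $\G$ is connected and non-bipartite and $\Lambda$ is connected, $\G \times \Lambda$ is connected (as noted above), and each edge flips the $\Lambda$-part; thus two vertices lie in the same part precisely when they are joined by an even path. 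Then \cref{prop:c-on-pair} yields $\bbe_x - \bbe_y \in \Rg$ for same-part pairs and $\bbe_x + \bbe_y \in \Rg$ for cross-part pairs, which is exactly the hypothesis of \cref{thm:nbrly-mu} for the partition $U \sqcup V$.

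It then remains to evaluate the two gcds of \cref{thm:nbrly-mu}, which is routine bookkeeping with the formula above. For $\delta$: if $(v,\lambda) \in U$ then $\lambda \in \Lambda_1$, so $N(\lambda) \subseteq \Lambda_2$ and the entire open neighborhood of $(v,\lambda)$ sits in $V$; hence $|N[(v,\lambda)]\cap U| - |N[(v,\lambda)]\cap V| = 1 - \deg(v)\deg(\lambda)$, and symmetrically in the other part, giving $\delta = \gcd\{\deg(v)\deg(\lambda)-1\}$. For $\kappa$: when the two vertices are adjacent the intersection $\{(u,\lambda_1),(v,\lambda_2)\}$ straddles both parts and contributes $0$; when they are non-adjacent with $\lambda_1,\lambda_2$ in different parts, $N(\lambda_1)\cap N(\lambda_2) = \emptyset$ and the contribution is again $0$; and when $\lambda_1,\lambda_2$ lie in the same part $\Lambda_i$, every common neighbor has second coordinate in $\Lambda_{3-i}$, so the whole set $(N(u)\cap N(v)) \times (N(\lambda_1)\cap N(\lambda_2))$ lies in a single part and contributes $\pm|N(u)\cap N(v)|\cdot|N(\lambda_1)\cap N(\lambda_2)|$. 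This recovers exactly the stated $\kappa$.

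Finally I would check $\delta \neq 0$: a non-bipartite $\G$ has a vertex of degree $\geq 2$ and a connected $\Lambda$ has an edge, so some $\deg(v)\deg(\lambda)-1 \geq 1$; hence \cref{thm:nbrly-mu} applies and gives $\mu = \gcd(\delta,\kappa)$. The ``in particular'' case is the specialization $\Lambda = K_2$, where each part is a single vertex of degree $1$; this forces $\lambda_1 = \lambda_2$ (so $u \neq v$) and $|N(\lambda_1)\cap N(\lambda_2)| = 1$, reducing $\delta$ and $\kappa$ to the displayed expressions. The only genuine obstacle is the neighborhood-intersection computation; once the bipartiteness of $\Lambda$ is used to annihilate $N(\lambda_1)\cap N(\lambda_2)$ both on edges and across parts, positive neighborliness and the values of $\delta,\kappa$ all follow mechanically from \cref{thm:nbrly-mu}.
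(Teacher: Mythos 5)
Your proposal is correct and follows essentially the same route as the paper's proof: both establish that adjacent vertices of $\G \times \Lambda$ have closed-neighborhood intersection exactly $\{(u,\lambda_1),(v,\lambda_2)\}$ (since adjacent vertices of bipartite $\Lambda$ have no common neighbors), derive the signed-pair relations for the partition $U = \G \times \Lambda_1$, $V = \G \times \Lambda_2$, and then apply \cref{thm:nbrly-mu} with the identical bookkeeping for $\delta$, $\kappa$, and the nonvanishing of $\delta$. The only (minor) difference is that you obtain the signed pairs by invoking \cref{prop:c-on-pair} via connectivity and the bipartite structure of the product, whereas the paper telescopes explicit path relations using the non-bipartiteness of $\G$ directly; this is a mild streamlining, not a different argument.
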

\begin{proof}
    Let $1$ and $2$ denote adjacent vertices in $\Lambda$. If $u$ and $v$ are neighbors in $\G$, then $N[u,1] \cap N[v,2]$ = $\{(u,1), (v,2)\}$ since $1$ and $2$ have no common neighbors. Then if we take a path $u-v-w$ in $\G$, and take $(\vec N[u,1] \cap \vec N[v,2]) - (\vec N[v,2] \cap \vec N[w,1])$, we see that $\bbe_{(u,1)} - \bbe_{(w,1)} \in \Z^{C_{\G \times \Lambda}}$. By similar arguments, if $u$ and $w$ are any two vertices of $\G$ that are an even distance apart, we have $\bbe_{(u,1)} - \bbe_{(w,1)} \in \Z^{C_{\G \times \Lambda}}$. Now $\G$ is not bipartite so every pair of vertices of $\G$ is connected by an even-length path. Thus, for any $u, v \in \G$ and $i = 1, 2$ we have $\bbe_{(u, i)} - \bbe_{(v, i)} \in \Z^{C_{\G \times \Lambda}}$. Coming back to neighbors $u$ and $v$, this, together with the fact that $\bbe_{(u,1)} + \bbe_{(v,2)} \in \Z^{C_{\G \times \Lambda}}$, tells us $\bbe_{(u,1)} + \bbe_{(u,2)} \in \Z^{C_{\G \times \Lambda}}$. Continuing along the edges of $\Lambda$, we see that $\bbe_{(u,\lambda_1)} - \bbe_{(u,\lambda_2)} \in \Z^{C_{\G \times \Lambda}}$ if $\lambda_1, \lambda_2 \in \Lambda$ are an even distance apart (i.e. in the same part of $\Lambda$), whereas $\bbe_{(u,\lambda_1)} + \bbe_{(u,\lambda_2)} \in \Z^{C_{\G \times \Lambda}}$ if $\lambda_1$ and $\lambda_2$ are  an odd distance apart (i.e. in different parts of $\Lambda$). Thus, taking $U = \{(v, \lambda) : v \in \G, \lambda \in \Lambda_1\}$ and $V = \{(v, \lambda) : v \in \G, \lambda \in \Lambda_2\}$, we can apply \cref{thm:nbrly-mu} to see that $\Gamma \times \Lambda$ is almost RA, as follows.
    
    For every $v \in \G,\lambda \in \Lambda$, $N[v,\lambda]$ has 1 vertex in one part and $\deg(v)\deg(\lambda)$ vertices in the other, so $\delta = \gcd\{\deg(v)\deg(\lambda)-1 : v \in \G, \lambda \in \Lambda\} = 0$ only if $\deg(\lambda) = \deg(v) = 1$ for all $v \in \G$ and $\lambda \in \Lambda$. But this means $\G = K_2$ is bipartite --- a contradiction, so $\delta \ne 0$ and $\Gamma \times \Lambda$ is $1/\mu$-RA, where $\mu = \gcd(\delta, \kappa)$. To write down $\kappa$, note that if $\lambda_1$ and $\lambda_2$ are in different parts of $\Lambda$, then $N[u,\lambda_1] \cap N[v,\lambda_2]$ is $\emp$ if $u$ and $v$ are not adjacent, or else contains $(u, \lambda_1)$ in one part and $(v, \lambda_2)$ in the other, in which case the difference in the sizes of intersections of $N[u,\lambda_1] \cap N[v,\lambda_2]$ with $U$ and $V$ is $1 - 1 = 0$. Thus, the only parts that contribute to $\kappa$ are where $\lambda_1$ and $\lambda_2$ are in the same part of $\Lambda$, and in that case all of $N[u,\lambda_1] \cap N[v,\lambda_2]$ is contained in one part, so the difference we seek is $|N(u) \cap N(v)| \cdot |N(\lambda_1) \cap N(\lambda_2)|$. Noting that $|N(\lambda_1) \cap N(\lambda_2)| = 1$ and $\deg(\lambda) = 1$ when $\Lambda = K_2$, this proves the theorem. 
\end{proof}

Taking the tensor product of a Kneser graph with $K_2$ often gives an example where \cref{thm:tensor-bipartite} gives a nontrivial $\Rg$. The following general result follows from our work and \cite{GCD}.

\begin{corollary}    
    Let $\G = \Kn(n, k) \times K_2$, with $n > 2k$. Let $L = \lcm\{1, 2, \dots, k\}$ and $n' = n - 2k$. Then $\G$ is $1/\mu$-RA, where $\mu = n'/\gcd(L, n')$. 
\end{corollary}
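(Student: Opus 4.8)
The plan is to apply the $\G \times K_2$ case of \cref{thm:tensor-bipartite} with $\G = \Kn(n,k)$ and then evaluate the resulting quantities $\delta$ and $\kappa$ as gcd's of binomial coefficients. Since $\Kn(n,k)$ is connected and non-bipartite for $n > 2k$ (it has finite odd girth), the theorem applies and tells us $\Kn(n,k) \times K_2$ is $1/\mu$-RA with $\mu = \gcd(\delta, \kappa)$. Every vertex of $\Kn(n,k)$ has degree $\binom{n-k}{k}$, so $\delta = \binom{n-k}{k} - 1$. Two \emph{distinct} vertices $u, v$ sharing exactly $j$ elements have $|u \cup v| = 2k - j$, so their common neighbors are the $k$-sets disjoint from $u \cup v$, of which there are $\binom{n-2k+j}{k}$. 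Writing $m = n' = n - 2k$ and noting that every value $0 \le j \le k-1$ is realized by some pair when $n > 2k$, we get $\kappa = \gcd\{\binom{m+j}{k} : 0 \le j \le k-1\}$ (zero terms, arising from far-apart pairs with no common neighbor, do not affect the gcd).

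The heart of the argument is to reduce $\kappa$ to a gcd taken along a single row of Pascal's triangle. I would show $\kappa = \gcd\{\binom{m}{i} : 1 \le i \le k\}$ by a two-sided divisibility argument. In one direction, Vandermonde gives $\binom{m+j}{k} = \sum_{l=0}^{j} \binom{j}{l}\binom{m}{k-l}$, and for $0 \le j \le k-1$ each summand is a multiple of some $\binom{m}{i}$ with $1 \le i \le k$; so $\gcd_i \binom{m}{i}$ divides $\kappa$. In the other direction, the finite differences $\binom{m}{k-i} = \sum_{l=0}^{i} (-1)^{i-l}\binom{i}{l}\binom{m+l}{k}$ for $0 \le i \le k-1$ express each of $\binom{m}{1}, \ldots, \binom{m}{k}$ as an integer combination of the generators of $\kappa$, so $\kappa$ divides $\gcd_i \binom{m}{i}$. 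With this reduction in hand, I would invoke \cite{GCD}, which evaluates $\gcd\{\binom{m}{i} : 1 \le i \le k\} = m/\gcd(m, L)$ with $L = \lcm\{1, \ldots, k\}$, to conclude $\kappa = n'/\gcd(L, n')$.

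It remains to show $\kappa \mid \delta$, so that $\mu = \gcd(\delta, \kappa) = \kappa$. Applying Vandermonde once more with $j = k$ gives $\binom{n-k}{k} = \binom{m+k}{k} = \sum_{l=0}^{k}\binom{k}{l}\binom{m}{k-l}$, in which the $l = k$ term is $\binom{k}{k}\binom{m}{0} = 1$ and every other term is a multiple of $\kappa$; hence $\delta = \binom{n-k}{k} - 1 \equiv 0 \pmod{\kappa}$. Therefore $\mu = \kappa = n'/\gcd(L, n')$, as claimed. As a sanity check, the case $k = 1$ gives $\Kn(n,1) = K_n$, so $\G$ is the crown graph $\Cr(2n)$ with $L = 1$ and $\mu = n - 2$, consistent with the crown-graph computations elsewhere in the paper.

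The only genuinely hard input is the number-theoretic identity $\gcd\{\binom{m}{i} : 1 \le i \le k\} = m/\gcd(m, L)$, which is precisely what \cite{GCD} supplies; everything on our side is a direct application of \cref{thm:tensor-bipartite} together with the two Vandermonde expansions and the finite-difference recovery. Thus the main obstacle is not a new difficulty at all, but rather correctly matching the combinatorial data of $\Kn(n,k)$ (degrees and common-neighbor counts) to the shape of the gcd appearing in \cite{GCD}, and verifying the two $\equiv$-reductions that collapse $\gcd(\delta,\kappa)$ down to $\kappa$.
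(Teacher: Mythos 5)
Your proof is correct and takes essentially the same route as the paper's: apply the $\G \times K_2$ case of \cref{thm:tensor-bipartite}, identify $\delta = {n-k \choose k} - 1$ and $\kappa$ with the same binomial coefficients ${n'+j \choose k}$, reduce everything to $\gcd\{{n' \choose i} : 1 \le i \le k\}$, and cite \cite[Theorem 1]{GCD}. The only difference is the mechanics of the reduction: the paper iterates $\gcd(b, a+b) = \gcd(a,b)$ with Pascal's rule, carrying the $-1$ from $\delta$ along until it terminates at ${n' \choose 0} - 1 = 0$, while you establish the same identity by two-sided divisibility (Vandermonde in one direction, the finite-difference inversion ${m \choose k-i} = \sum_{l=0}^{i} (-1)^{i-l}{i \choose l}{m+l \choose k}$ in the other) and then dispose of $\delta$ separately by showing $\kappa \mid \delta$ --- an equivalent, if slightly more explicit, bookkeeping of the same Pascal-identity argument.
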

\begin{proof}
    Recall from \cref{Kneser-not-RA} that the degree of every vertex $v$ is ${n - k \choose k}$ and the number of common neighbors of two vertices that share $i$ numbers in their label, say $\{1, \dots, k\}$ and $\{1, \dots, i, k+1, \dots, 2k-i\}$, is given by ${n - 2k + i \choose k}$, where $i \in \{0, 1, \dots, k-1\}$. Thus, by \cref{thm:tensor-bipartite},
    \[\mu = \gcd\left\{{n-2k \choose k}, \dots, {n-k-1 \choose k}, {n-k \choose k} - 1\right\}.\]
    
    

    Repeatedly applying the fact that $\gcd(b, a+b) = \gcd(a, b)$ to the binomial coefficients in the manner of
    \[\gcd\left({m \choose k}, {m+1 \choose k}\right) = \gcd\left({m \choose k}, {m \choose k-1} + {m \choose k}\right) = \gcd\left({m \choose k}, {m \choose k-1}\right)\]
    and 
    \[\gcd\left({m \choose k}, {m+1 \choose k} - 1\right) = \gcd\left({m \choose k}, {m \choose k-1} + {m \choose k} - 1\right) = \gcd\left({m \choose k}, {m \choose k-1} - 1\right)\]
    shows that the desired is actually the gcd of
    \[{n-2k \choose k}, \dots, {n-2k \choose 2}, {n-2k \choose 1}, {n-2k \choose 0} - 1 = 0.\]
    The result follows from \cite[Theorem 1]{GCD}, which  states the gcd of the nonzero numbers above equals $d(n-2k; 1, k) = (n-2k)/\gcd(L, n-2k)$. 
    \end{proof}


Now let us consider the case where both $\Gamma = \Gamma_1 \sqcup \Gamma_2$ and $\Lambda = \Lambda_1 \sqcup \Lambda_2$ are bipartite. 
In this case, their tensor product has two connected components: the first is the bipartite graph with parts $\Gamma_1 \times \Lambda_1$ and $\Gamma_2 \times \Lambda_2$, and the second is bipartite with parts $\Gamma_1 \times \Lambda_2$ and $\Gamma_2 \times \Lambda_1$. 
We will work with the first connected component and keep in mind that the same arguments can be applied to the second. 
Just as in the proof of \cref{thm:tensor-bipartite}, if $u, v \in \G$ are neighbors and $\la_1, \la_2 \in \Lambda$ are neighbors, then $N[u, 1] \cap N[v, 2] = \{(u, 1), (v, 2)\}$ and if we have paths $u - v - w$ in $\G$ and $\la_1 - \la_2 - \la_3$ in $\La$, then $(\vec N[u, \la_1] \cap \vec N[v, \la_2]) - (\vec N[v, \la_2] \cap \vec N[w, \la_3]) = \vec e_{(u, \la_1)} - \vec e_{(w, \la_3)}\}$. But for either $i \in \{1, 2\}$ and $j \in \{1, 2\}$, if $u, w \in \Gamma_i$ and $\la_1, \la_3 \in \Lambda_j$ then there is an even-length path between $(u, \la_1)$ and $(w, \la_3)$ (take any even length path between $u$ and $w$ in $\G$ and follow it with second coordinates swapping between $\la_1$ and its neighbor in $\Lambda$, and then do the same for going from $\la_1$ to $\la_3$, swapping between $w$ and its neighbor in $\G$ for the first coordinate). Similarly, there is an odd-length path between $(u, \la_1)$ and $(v, \la_2)$ if $u$ and $v$ are in opposite parts of $\G$ and $\la_1, \la_2$ are in opposite parts of $\La$. So again, $\vec e_{(u, \la_1)} + \vec e_{(v, \la_2)} \in \Z^{C_{\G \times \Lambda}}$ if $(u, \la_1)$ and $(v, \la_2)$ are in opposite parts of a connected component of $\G \times \Lambda$, and $\vec e_{(u, \la_1)} - \vec e_{(w, \la_3)} \in \Z^{C_{\G \times \Lambda}}$ if $(u, \la_1)$ and $(w, \la_3)$ are in the same part of a connected component. That means we can apply \cref{thm:nbrly-mu} to each of the connected components of $\G \times \Lambda$. By the same considerations as in the proof of \cref{thm:tensor-bipartite}, this gives us the following.

\begin{theorem}
\label{thm:tensor-2bipartite}
    Let $\G = \Gamma_1 \sqcup \Gamma_2$ and $\Lambda = \Lambda_1 \sqcup \Lambda_2$ be bipartite graphs. Then $\G \times \Lambda$ is the disjoint union of two connected components, one $1/\mu_1$-RA and the other $1/\mu_2$-RA, with $\mu_i = \gcd(\delta_i, \kappa_i)$, where
    \[\delta_i = \gcd\{\deg(v)\deg(\lambda)-1 : v \in \G_a, \lambda \in \Lambda_b, a \in \{1, 2\}\},\]
    \[\kappa_i = \gcd\{|N(u) \cap N(v)| \cdot |N(\lambda_1) \cap N(\lambda_2)|: u, v \in \G_a, \lambda_1, \lambda_2 \in \Lambda_b, a \in \{1, 2\}, (u, \lambda_1) \ne (v, \lambda_2)\},\] 
    and $b = a$ (resp. $b = 3-a$) if $i = 1$ (resp. $i = 2$). 
\end{theorem}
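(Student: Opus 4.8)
The plan is to reduce the statement to two independent applications of \cref{thm:nbrly-mu}, one per connected component of $\G \times \La$. Since $\G$ and $\La$ are both bipartite, $\G \times \La$ splits into the two components with vertex sets $(\G_1 \times \La_1) \cup (\G_2 \times \La_2)$ and $(\G_1 \times \La_2) \cup (\G_2 \times \La_1)$. The crucial structural observation is that every closed neighborhood $N[v, \la]$ lies entirely inside one component, so the rows of $C_{\G \times \La}$ (closed neighborhoods and their pairwise intersections) never mix the two components: $C_{\G \times \La}$ is block diagonal, and each block may be analyzed separately. Within the first component the relevant partition into parts is $U = \G_1 \times \La_1$ and $V = \G_2 \times \La_2$ (the second component is handled identically after relabeling, replacing $b=a$ by $b=3-a$), and the goal is to verify this partition meets the hypotheses of \cref{thm:nbrly-mu}.

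First I would establish the sign structure. If $(u, \la_1)$ and $(v, \la_2)$ are adjacent, then $u \sim v$ in $\G$ and $\la_1 \sim \la_2$ in $\La$; since adjacent vertices of a bipartite graph have no common neighbor, $N(u) \cap N(v) = \emptyset$ and $N(\la_1) \cap N(\la_2) = \emptyset$, forcing $N[u, \la_1] \cap N[v, \la_2] = \{(u, \la_1), (v, \la_2)\}$ and hence $\bbe_{(u, \la_1)} + \bbe_{(v, \la_2)} \in \Z^{C_{\G \times \La}}$. Telescoping this relation along a length-two path, exactly as in the paragraph preceding the theorem, gives $\bbe_{(u, \la_1)} - \bbe_{(w, \la_3)} \in \Z^{C_{\G \times \La}}$ whenever the two endpoints are joined by an even-length path, i.e. lie in the same part of the component; combining this with the previous relation handles odd-length paths and opposite parts. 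This is precisely the input required by \cref{thm:nbrly-mu}, and in particular it shows each component is neighborly.

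Finally I would compute $\delta_i$ and $\kappa_i$ by the same neighborhood counts used at the end of the proof of \cref{thm:tensor-bipartite}. For a vertex $(v, \la)$ the closed neighborhood contains $(v, \la)$ in one part and all $\deg(v)\deg(\la)$ of its tensor-neighbors in the other, so the signed count is $\deg(v)\deg(\la) - 1$, yielding $\delta_i$. For the intersection count, a pair $(u, \la_1), (v, \la_2)$ contributes nothing unless $u, v$ share a part of $\G$ \emph{and} $\la_1, \la_2$ share a part of $\La$: otherwise either the two vertices sit in opposite parts of the component (the intersection, if nonempty, consists of the two endpoints, contributing $1 - 1 = 0$) or they lie in different components; when they do share parts, the whole intersection is $(N(u) \cap N(v)) \times (N(\la_1) \cap N(\la_2))$, sits in a single part, and has size $|N(u) \cap N(v)| \cdot |N(\la_1) \cap N(\la_2)|$, yielding $\kappa_i$. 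Then \cref{thm:nbrly-mu} gives that the component is $1/\gcd(\delta_i, \kappa_i)$-RA. The main subtlety — the only real obstacle beyond bookkeeping — is the parity tracking through two bipartitions at once: a vertex's part in the component is the \emph{joint} parity of its two coordinates, so one must check carefully that the even/odd path argument correctly identifies ``same part of $\G$ and same part of $\La$'' with ``same part of the component.'' I would also flag the degenerate boundary case $\delta_i = \kappa_i = 0$, which forces every vertex to have degree one and hence $\G = \La = K_2$; there each component is a single $K_2$ carrying a zero elementary divisor, the one situation in which ``$1/\mu_i$-RA'' must be read with $\mu_i = 0$.
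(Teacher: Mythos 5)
Your proof is correct and follows essentially the same route as the paper: split $\G \times \La$ into its two known components, use the fact that adjacent vertices of bipartite factors have no common neighbors (so each edge is positive) plus telescoping along even/odd paths to verify the hypotheses of \cref{thm:nbrly-mu} on each component, and then repeat the neighborhood counts from \cref{thm:tensor-bipartite} to identify $\delta_i$ and $\kappa_i$. Your flagging of the degenerate case $\G = \La = K_2$ (where $\delta_i = \kappa_i = 0$ and each component is a $K_2$ carrying a zero elementary divisor) is a legitimate boundary observation: it is excluded by the hypothesis of \cref{thm:nbrly-mu} but not, read literally, by the statement of the theorem, which the paper's own argument passes over silently.
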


For example, $C_\G$ for $\G = K_2 \times \Cr(2k+4)$ has two elementary divisors equal to $k$ for any $k \ge 1$. This makes sense because, as we will see in \cref{cor:crown-graph}, the crown graph $\Cr(2k+4)\equiv K_2 \times K_{k+2}$ is $1/k$-RA and $\G$ has two disconnected components, each isomorphic to $\Cr(2k+4)$. In general, for $\G = K_2 \times \Lambda$ where $\Lambda$ is a bipartite graph, $\G$ is isomorphic to two disjoint copies of $\Lambda$ and $C_\G$ either has all 1's as elementary divisors (if $\Lambda$ is RA) or has two elementary divisors equal to $\mu$ if $\Lambda$ is $1/\mu$-RA.

Finally, we explore the tensor product of two non-bipartite graphs.


\begin{theorem}
\label{thm:K_mxnot-bipartite}
If $\G$ is not bipartite and $m \geq 3$, then $\G \times K_m$ is $1/2$-RA or RA, and it is $1/2$-RA if and only if $m$ is even, $\deg v$ is odd for every $v \in \G$, and $|N[u] \cap N[v]|$ is even for every $u, v \in \G$.
\end{theorem}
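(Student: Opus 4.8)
The plan is to reduce the statement to \cref{cor:pos-nbr-1/mu}(1) by showing that $\G \times K_m$ is connected, non-bipartite, and \emph{positively} neighborly. Connectedness and non-bipartiteness are the standard facts that a tensor product of two connected graphs is connected and non-bipartite exactly when at least one factor is non-bipartite; here both $\G$ and $K_m$ (for $m \ge 3$) are. So the entire content is to prove that every edge of $\G \times K_m$ is positive, i.e.\ $\bbe_{(u,a)} + \bbe_{(v,b)} \in \Rg$ whenever $u \sim v$ in $\G$ and $a \ne b$ in $K_m$. Once this is done, \cref{cor:pos-nbr-1/mu}(1) immediately gives that $\G \times K_m$ is $1/2$-RA or RA, together with the criterion that it is $1/2$-RA if and only if every vertex has odd degree and every pair of vertices has an even number of common neighbors, which I will then translate into the stated conditions on $\G$ and $m$.

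For the main step I would work modulo $\Rg$ and exploit the explicit description $N[u,a] \cap N[v,b] = \{(u,a),(v,b)\} \sqcup \big((N(u) \cap N(v)) \times (\{1,\dots,m\} \setminus \{a,b\})\big)$ for an edge $u \sim v$ and $a \ne b$, together with the analogous descriptions when the two labels agree or the two $\G$-vertices agree. First I would prove \textbf{label-independence}: for every vertex $x$ of $\G$ and all labels $b,c$, $\bbe_{(x,b)} - \bbe_{(x,c)} \in \Rg$. This comes from subtracting the rows $\vec N[u,a] \cap \vec N[v,b]$ and $\vec N[u,a] \cap \vec N[v,c]$, using a third label $a$ (where $m \ge 3$ is essential), which cancels the isolated term $(u,a)$ and leaves $(\bbe_{(v,b)} - \bbe_{(v,c)}) - \sum_{w \in N(u) \cap N(v)}(\bbe_{(w,b)} - \bbe_{(w,c)})$; swapping the roles of $u$ and $v$ gives the same expression with $u$ in place of $v$, and subtracting the two cancels the common-neighbor bulk to yield $(\bbe_{(u,b)} - \bbe_{(u,c)}) - (\bbe_{(v,b)} - \bbe_{(v,c)}) \in \Rg$, after which connectedness of $\G$ propagates label-independence to all vertices. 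With this in hand, every bulk sum $\sum_{d \ne a} \bbe_{(w,d)}$ collapses to $(m-1)\bbe_{(w,a)}$ modulo $\Rg$, and I can extract two divisibility facts: from $\vec N[u,a]$ and $\vec N[u,a] \cap \vec N[u,b]$ I get $(m-2)\bbe_{(u,a)} \in \Rg$, and from $\vec N[u,a] \cap \vec N[v,b]$ together with the same-label row $\vec N[u,a] \cap \vec N[v,a]$ I get $(m-1)(\bbe_{(u,a)} + \bbe_{(v,b)}) \in \Rg$ for each edge $u \sim v$. Since $(m-1) - (m-2) = 1$, subtracting $(m-2)(\bbe_{(u,a)} + \bbe_{(v,b)})$ from $(m-1)(\bbe_{(u,a)} + \bbe_{(v,b)})$ yields $\bbe_{(u,a)} + \bbe_{(v,b)} \in \Rg$, which is exactly positive neighborliness. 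The hard part will be precisely this bulk: because adjacent vertices of $K_m$ share the $m-2$ common neighbors $\{1,\dots,m\} \setminus \{a,b\}$ (unlike the bipartite factors in \cref{thm:tensor-bipartite}, whose adjacent vertices have none), the intersection rows are no longer supported on two points, and the whole argument is organized around first making these common-neighbor contributions uniform via label-independence and then using the coprimality of $m-1$ and $m-2$ to eliminate them.

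Finally I would translate the $1/2$-RA criterion from \cref{cor:pos-nbr-1/mu}(1). A vertex $(u,a)$ has degree $(m-1)\deg_\G(u)$, so all degrees are odd if and only if $m$ is even and every $\deg_\G(u)$ is odd; this already forces the parity of $m$. For common neighbors, $(u,a)$ and $(v,b)$ share $|N(u) \cap N(v)| \cdot (m-1)$ common neighbors when $a = b$ and $u \ne v$, and $|N(u) \cap N(v)| \cdot (m-2)$ when $a \ne b$, while $(u,a)$ and $(u,b)$ share $\deg_\G(u) \cdot (m-2)$. Once $m$ is even, the $(m-2)$-factors are automatically even, so the only remaining requirement is that $|N(u) \cap N(v)|$ be even for all $u \ne v$ in $\G$, which has the same parity as $|N[u] \cap N[v]|$. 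This yields exactly the stated criterion: $\G \times K_m$ is $1/2$-RA if and only if $m$ is even, $\deg v$ is odd for every $v \in \G$, and $|N[u] \cap N[v]|$ is even for every $u,v \in \G$.
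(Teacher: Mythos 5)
Your overall skeleton matches the paper's proof of \cref{thm:K_mxnot-bipartite}: prove label-independence, use it to collapse the common-neighbor bulk, extract the two divisibility facts $(m-2)\bbe_{(u,a)} \in \Z^{C_{\G \times K_m}}$ and $(m-1)(\bbe_{(u,a)}+\bbe_{(v,b)}) \in \Z^{C_{\G \times K_m}}$, conclude neighborliness by coprimality of $m-1$ and $m-2$, and finish with the parity translation (the paper routes the last step through \cref{thm:neg-nbr-1/mu} rather than \cref{cor:pos-nbr-1/mu}, an immaterial difference). However, your proof of label-independence --- the step on which everything else rests --- has a genuine gap. Writing $D_x := \bbe_{(x,b)} - \bbe_{(x,c)}$, the two subtractions you describe yield only, for each edge $u \sim v$ of $\G$, the relations $D_v - \sum_{w \in N(u)\cap N(v)} D_w$ and $D_u - D_v$ in the row space. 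These show that all the $D_x$ are congruent to a single class $D$ modulo the row space, but they do not show that any individual $D_x$ lies in it: ``connectedness propagates label-independence'' has no base case to propagate from. Nor can your relations be combined to close the gap: together they give only $(1 - |N(u)\cap N(v)|)\,D \in \Z^{C_{\G \times K_m}}$ for each edge, and the gcd of these coefficients need not be $1$. For instance, with $\G = K_3$ every edge has exactly one common neighbor, so every coefficient is $0$ and your relations say nothing at all about $D$.

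The missing ingredient is rows supported over a \emph{single} $\G$-vertex, which you use only later (after already assuming label-independence). The exact identity
\[ \vec N[u,b] - \vec N[u,c] + (\vec N[u,c] \cap \vec N[u,d]) - (\vec N[u,b] \cap \vec N[u,d]) = \bbe_{(u,b)} - \bbe_{(u,c)}, \]
for a third label $d$ (this is where $m \geq 3$ enters), gives label-independence outright with no propagation needed; it is precisely the paper's equation \eqref{eqn:tensor:vertical-move}. With that repair, the remainder of your argument --- the derivation of the two divisibility facts, the coprimality step giving positive neighborliness, and the parity bookkeeping showing the $1/2$-RA criterion of \cref{cor:pos-nbr-1/mu} is equivalent to the stated conditions on $m$, $\deg v$, and $|N[u] \cap N[v]|$ --- is correct and in line with the paper. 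One further small slip: the tensor product of two connected graphs is non-bipartite exactly when \emph{both} factors are non-bipartite (and connected when at least one is); your phrasing inverts this, though the conclusion you need is still true here since $\G$ and $K_m$ are both non-bipartite.
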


\begin{proof}
First, note that for any $u \in \G$,
\begin{equation}
    \label{eqn:tensor:vertical-move}
    \vec N[u,1] - \vec N[u,2] + (\vec N[u,2] \cap \vec N[u,3]) - (\vec N[u,1] \cap \vec N[u,3]) = \bbe_{(u,1)} - \bbe_{(u,2)} \in \Z^{C_{\G \times K_m}},
\end{equation}
which means we can move the coefficient of any vertex in a row of $C_{\G \times K_m}$ to a vertex with any other second coordinate. 

Next, note that 
\begin{equation}
    \label{eqn:tensor:mu|m-2}
    (m-2) \vec N[u,1] - (\vec N[u,1] \cap \vec N[u,2]) - \ldots - (\vec N[u,1] \cap \vec N[u,m]) = (m-2)\bbe_{(u,1)} \in \Z^{C_{\G \times K_m}}.
\end{equation}
Furthermore, for any neighbors $u, v \in \G$, we have
\begin{equation}
    \label{eqn:tensor:(m-2)intersections}
    (m-2)(\vec N[u,1] \cap \vec N[v,1]) - (\vec N[u,1] \cap \vec N[v,2]) - \ldots - (\vec N[u,1] \cap \vec N[v,m]) = (1-m)\bbe_{(u,1)} - \sum_{i=2}^m \bbe_{(v,i)} \in \Z^{C_{\G \times K_m}}.
\end{equation}

From \eqref{eqn:tensor:vertical-move}, moving all the $(v, i)$ coefficients to that of $(v, 1)$ by adding $\sum_{i=2}^m (\bbe_{(v,i)} - \bbe_{(v,1)})$, we get 
\[(1-m) (\bbe_{(u,1)} + \bbe_{(v,1)}) \in \Z^{C_{\G \times K_m}}.\]
Adding this to $(m-2)\bbe_{(u, 1)} + (m-2)\bbe_{(v, 1)}$ from \eqref{eqn:tensor:mu|m-2}, we find that $\bbe_{(u, 1)} + \bbe_{(v, 1)} \in \Z^{C_{\G \times K_m}}$. This means the edges along the first coordinate are ``positive'', so overall $\G \times K_m$ is neighborly. But $\G$ is not bipartite, so going around an odd cycle also gives us $\bbe_{(u,1)} - \bbe_{(v,1)} \in \Z^{C_{\G \times K_m}}$. That means, not only is $\G \times K_m$ negatively neighborly but also, combined with what we already had gives us $2\bbe_{(u,1)} \in \Z^{C_{\G \times K_m}}$, so $\G \times K_m$ is $1/2$-RA or RA.

Applying \cref{thm:neg-nbr-1/mu}, we first look at $\delta = \gcd\{\deg(v, i) + 1: v \in \G, i \in K_m\}$. We have $\deg(v, i) = (\deg v)(m-1) + 1$, which is even iff $m$ is even and $\deg v$ is odd. Next, considering $\kappa = \gcd\{|N[u,i] \cap N[v,j]| : u,v \in \G, i, j \in K_m, (u, i) \ne (v, j)\}$, if $i \ne j$ then $|N[u,i] \cap N[v,j]| = |N[u] \cap N[v]|(m-2) + 2b$, where $b = 1$ if $u$ and $v$ are neighbors or else $b = 0$. If instead $i = j$, then we have $|N[u,i] \cap N[v,i]| = |N[u] \cap N[v]|(m-1)$, so we see that $|N[u] \cap N[v]|$ is multiplied by both $m-1$ and $m-2$ (individually) in this set, which means $\kappa$ is even iff $|N[u] \cap N[v]|$ is always even. Thus, $\mu = \gcd(\delta, \kappa)$ is even iff all these conditions hold.
\end{proof}

Note in particular that \cref{thm:K_mxnot-bipartite} states that if $\G$ is not bipartite, then $\G \times K_m$ is always RA for odd $m$.

Now we turn our attention to the general tensor product of an arbitrary number of complete graphs. Note that $K_2 \times K_2$ is disconnected, and in general the tensor product $K_2 \times K_2 \times \cdots \times K_2$ of $n$ copies of $K_2$ has $2^{n-1}$ disconnected components; these components are just the pairs of opposite vertices of an $n$-dimensional cube. The tensor product of $n$ copies of $K_2$ and a non-bipartite graph $\G$ results in $2^{n-1}$ isomorphic disjoint copies of $\G$, so it suffices to consider tensor products $K_{m_1} \times K_{m_2} \times \cdots \times K_{m_n}$, with $m_1 \le m_2 \le \cdots \le m_n$, where either $m_1 > 2$ or $m_1 = 2$ and $m_2 > 2$.

\begin{theorem}
    \label{thm:tensor-completes}
    The graph $\G = K_{m_1} \times K_{m_2} \times \cdots \times K_{m_n}$, for $2 \le m_1 \le m_2 \le \cdots \le m_n$ and $m_2 \ge 3$ is always $1/\mu$-RA, where
    \begin{enumerate}[(a)]
        \item if $m_1 = 2$, then $\mu = \gcd\{m_i-2 \mid i = 2,\dots, n\}$;
        \item otherwise, $\mu \le 2$ with $\mu = 2$ if and only if all $m_i$ are even.
    \end{enumerate}
\end{theorem}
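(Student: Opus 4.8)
The plan is to split on the value of $m_1$, which controls whether $\G$ is bipartite: a tensor product is bipartite exactly when some factor is, so $m_1 = 2$ gives a bipartite $\G$ (part (a)) while $m_1 \ge 3$ forces every factor, and hence $\G$, to be non-bipartite (part (b)). In part (a) I will peel off the lone $K_2$ factor and apply \cref{thm:tensor-bipartite}; in part (b) I will peel off the last factor $K_{m_n}$ (with $m_n \ge 3$) and apply \cref{thm:K_mxnot-bipartite}. The common ingredient is that open neighborhoods multiply across a tensor product of complete graphs: for $u = (u_i)$ and $v = (v_i)$ one has $N(u) \cap N(v) = \prod_i \bigl(V(K_{m_i}) \setminus \{u_i, v_i\}\bigr)$, so every vertex has open degree $\prod_i (m_i - 1)$ and $|N(u) \cap N(v)| = \prod_{i : u_i = v_i}(m_i - 1)\prod_{i : u_i \ne v_i}(m_i - 2)$. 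Closed neighborhoods differ only by the small corrections $|N[u]| = \prod_i(m_i-1)+1$, and $|N[u] \cap N[v]| = |N(u) \cap N(v)| + 2$ when $u \sim v$ (all coordinates differ) and $= |N(u) \cap N(v)|$ when $u,v$ are distinct and non-adjacent.

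For part (b), set $\G' = K_{m_1} \times \cdots \times K_{m_{n-1}}$, which is non-bipartite, and write $\G = \G' \times K_{m_n}$. By \cref{thm:K_mxnot-bipartite} the graph $\G$ is $1/2$-RA or RA, and is $1/2$-RA exactly when $m_n$ is even, every vertex of $\G'$ has odd degree, and every $|N[u] \cap N[v]|$ in $\G'$ is even. The degree of each vertex of $\G'$ is $\prod_{i<n}(m_i - 1)$, which is odd iff $m_1, \dots, m_{n-1}$ are all even; and once all $m_i$ are even each of the three intersection types is even (for $u = v$ the odd product gets a $+1$; for $u \sim v$ the even product gets a $+2$; for distinct non-adjacent $u,v$ the nonempty complement of the agreement set supplies an even factor $m_j - 2$). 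Hence all three conditions hold iff every $m_i$ is even, which gives $\mu = 2$ exactly then and $\mu = 1$ otherwise. I expect this to be routine bookkeeping.

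For part (a), set $\G' = K_{m_2} \times \cdots \times K_{m_n}$, non-bipartite, and write $\G = \G' \times K_2$. Then \cref{thm:tensor-bipartite} gives that $\G$ is $1/\mu$-RA with $\mu = \gcd(\delta, \kappa)$, where $\delta = D - 1$ for $D = \prod_{i=2}^n (m_i - 1)$ and $\kappa = \gcd\{|N(u) \cap N(v)| : u \ne v \in \G'\}$, each term being $\prod_{i \in S}(m_i - 1)\prod_{i \notin S}(m_i - 2)$ for a proper agreement set $S \subsetneq \{2, \dots, n\}$. I claim $\mu = g$ with $g = \gcd\{m_i - 2 : 2 \le i \le n\}$. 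The divisibility $g \mid \mu$ is immediate: modulo $g$ each $m_i - 1 \equiv 1$, so $D \equiv 1$ and $g \mid \delta$, while each $\kappa$-term carries a factor $m_j - 2$ from the nonempty complement of $S$ and so is divisible by $g$.

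The hard part will be the reverse divisibility $\mu \mid g$, which is the crux of the whole theorem. For each fixed $j$ I would take the pair differing only in coordinate $j$, with intersection count $t_j = (m_j - 2)P_j$ where $P_j = \prod_{i \ne j}(m_i - 1)$; since $D = (m_j - 1)P_j = t_j + P_j$, we have $P_j = D - t_j$. Because $\mu \mid \delta = D - 1$ and $\mu \mid \kappa \mid t_j$, this forces $P_j \equiv 1 \pmod{\mu}$, hence $\gcd(P_j, \mu) = 1$. Then $\mu \mid t_j = (m_j - 2)P_j$ together with this coprimality yields $\mu \mid m_j - 2$ for every $j$, i.e. $\mu \mid g$. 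This coprimality step is the main obstacle: it is precisely what lets the single divisor $\mu$ detect each $m_j - 2$ individually rather than only their product, and it hinges on $\delta$ and the ``one coordinate off'' intersections lying simultaneously in $\Rg$. Combining the two divisibilities gives $\mu = g$ and completes part (a).
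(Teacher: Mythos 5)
Your proof is correct. Part (b) is essentially the paper's argument: both reduce to \cref{thm:K_mxnot-bipartite} and then do the same parity bookkeeping (the paper checks degrees and closed-neighborhood intersections on the full product, you check them on the factor $K_{m_1}\times\cdots\times K_{m_{n-1}}$; these are equivalent). Part (a), however, takes a genuinely different route. The paper explicitly declines the direct computation --- calling it ``a bit of a messy proof using \cref{thm:tensor-bipartite}'' --- and instead defers to \cref{cor:xK_{k+2}} and \cref{rmk:cor-proves-thm}: it starts from the crown graph $K_2 \times K_{m_2} = \Cr(2m_2)$, which is $1/(m_2-2)$-RA by \cref{cor:crown-graph}, and inductively tensors on one complete factor at a time, each step replacing $\mu$ by $\gcd(\mu,\nu)$ via the girth-4 theory. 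You carry out the direct computation the paper skipped, and it works: $g \mid \mu$ is routine, and your coprimality step for $\mu \mid g$ (from $\mu \mid D-1$ and $\mu \mid t_j$ conclude $P_j = D - t_j \equiv 1 \pmod{\mu}$, so $\gcd(P_j,\mu) = 1$ and hence $\mu \mid m_j - 2$) is sound, including the degenerate case $n=2$ where $P_j$ is an empty product; in fact the same cancellation trick appears inside the paper's proof of \cref{cor:xK_{k+2}}, where $\gcd(\delta_\G,\kappa_\G)$ is shown to divide $\nu\deg(\lambda) - \nu(\deg(\lambda)-1) = \nu$. What the paper's route buys is reuse of the crown-graph result and a standalone inductive corollary of independent interest; what yours buys is a self-contained proof with no forward reference (the paper's proof of (a) cites a corollary established only in the following section). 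One detail you handled correctly and should keep explicit: the restriction $u \neq v$ in $\kappa$, which is forced by $(u,\lambda_1)\neq(v,\lambda_2)$ in the general statement of \cref{thm:tensor-bipartite} but silently dropped in its ``in particular'' clause; without it, $\deg(u)$ would enter the gcd and collapse $\mu$ to $1$.
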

\begin{proof}
    For the case (a) $m_1 = 2$, we could write down a bit of a messy proof  using \cref{thm:tensor-bipartite}, but the cleanest way is to simply use \cref{cor:xK_{k+2}} below, so we defer until then (see \cref{rmk:cor-proves-thm}). Now, for (b), let $m_1 \ge 3$. By \cref{thm:K_mxnot-bipartite}, $\G$ is 1/2-RA or RA, and if at least one $m_i$ is odd, the graph is RA. If all the $m_i$ are even, the degree of a vertex is $\prod_{i=1}^n(m_i-1)$, which is odd, and the intersection of the closed neighborhoods of two distinct vertices has size $\prod_{i=1}^n(m_i - a_i) + 2b$, where $a_i = 1$ if the vertices agree in coordinate $i$ or else $a_i = 2$, and $b = 0$ unless the vertices differ in all coordinates, in which case $b = 1$ (to account for the two vertices themselves being in the intersection). The vertices must differ in at least one coordinate, so at least one $a_i$ must be even and hence $\prod_{i=1}^n(m_i - a_i)$ is even so $\G$ is 1/2-RA. \qedhere




\end{proof}

We have been unable to show that an arbitrary tensor product of two non-bipartite graphs is almost RA, but the following leads to a partial result in this direction (\cref{prop:non-bip-tensor}).

\begin{lemma}
    \label{lem:edge-wo-tri}
    If $\La$ is a graph with an edge $\la_1 - \la_2$ that is not part of a 3-cycle, then for any non-bipartite graph $\G$, any two vertices $u, v \in \G$ and any $\la \in \La$ we have $\vec e_{(u, \la)} - \vec e_{(v, \la)} \in \Z^{C_{\G \times \La}}$.
\end{lemma}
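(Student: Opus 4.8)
The plan is to first prove the statement for the two fibers $\la_1$ and $\la_2$ sitting over the special edge, and then \emph{transport} it one edge at a time across $\La$ until it holds over every fiber. Write $\vec N[u,\la]$ for the closed neighborhood of $(u,\la)$ in $\G\times\La$; the workhorse is the identity, valid for adjacent $u\sim v$ in $\G$ and adjacent $a\sim b$ in $\La$,
\[ \vec N[u,a]\cap\vec N[v,b] = \vec e_{(u,a)}+\vec e_{(v,b)}+\sum_{w\in N(u)\cap N(v)}\;\sum_{t\in N(a)\cap N(b)}\vec e_{(w,t)}, \]
which is a row of $C_{\G\times\La}$ because $(u,a)\ne(v,b)$. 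I abbreviate by $P(\la)$ the target property that $\vec e_{(u,\la)}-\vec e_{(v,\la)}\in\Z^{C_{\G\times\La}}$ for all $u,v\in\G$.

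For the seed, the hypothesis that $\la_1-\la_2$ lies in no $3$-cycle says exactly $N(\la_1)\cap N(\la_2)=\emptyset$, so the double sum vanishes and $\vec N[u,\la_1]\cap\vec N[v,\la_2]=\vec e_{(u,\la_1)}+\vec e_{(v,\la_2)}$ for every edge $u\sim v$. Telescoping along a path $u-v-w$ in $\G$ gives $\vec e_{(u,\la_1)}-\vec e_{(w,\la_1)}\in\Z^{C_{\G\times\La}}$, and since $\G$ is connected and non-bipartite every pair of its vertices is joined by an even walk; this yields $P(\la_1)$ (and symmetrically $P(\la_2)$). The crux is the transport step: I claim that for \emph{any} edge $a\sim b$ of $\La$, $P(a)$ implies $P(b)$. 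Fixing an edge $u\sim v$ of $\G$, I would write the identity above twice, once as $\vec N[u,a]\cap\vec N[v,b]$ and once as $\vec N[v,a]\cap\vec N[u,b]$. The two triangle terms are \emph{identical} — each equals $\sum_{w\in N(u)\cap N(v)}\sum_{t\in N(a)\cap N(b)}\vec e_{(w,t)}$, depending only on the unordered pairs $\{u,v\}$ and $\{a,b\}$ — so subtracting the two rows cancels them exactly and leaves
\[ \big(\vec e_{(u,a)}-\vec e_{(v,a)}\big)-\big(\vec e_{(u,b)}-\vec e_{(v,b)}\big)\in\Z^{C_{\G\times\La}}. \]
Invoking $P(a)$ on the first bracket forces $\vec e_{(u,b)}-\vec e_{(v,b)}\in\Z^{C_{\G\times\La}}$ for every edge $u\sim v$, and telescoping along paths in the connected graph $\G$ promotes this to all pairs, i.e.\ $P(b)$.

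Finally, because $\La$ is connected, every $\la$ is reachable from $\la_1$ by a path, so iterating the transport step starting from $P(\la_1)$ delivers $P(\la)$ for all $\la\in\La$, which is the assertion. I expect the transport step to be the only real obstacle, and the feature that makes it go through — even across an edge $a\sim b$ that lies in many triangles — is that the unknown triangle contributions are never evaluated, only \emph{cancelled} by pairing the two ``crossed'' intersection rows; non-bipartiteness of $\G$ is used solely to manufacture even walks, and connectivity of $\La$ solely to chain the transport.
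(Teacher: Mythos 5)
Your proof is correct and follows essentially the same route as the paper's: the seed at the triangle-free edge plus even walks in the non-bipartite $\G$, followed by the crossed-intersection cancellation $\bigl(\vec N[u,a]\cap\vec N[v,b]\bigr)-\bigl(\vec N[v,a]\cap\vec N[u,b]\bigr)$, which is exactly the paper's vector $\x(u,v,a,b)$, telescoped along paths in $\La$. The only cosmetic difference is that you transport the property fiber-by-fiber while the paper telescopes $\x$ first and subtracts once at the end.
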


\begin{proof}
    If the edge $\la_1 - \la_2$ is not part of a 3-cycle, then for any edge $u - v \in \G$, $(u, \la_1)$ and $(v, \la_2)$ have no common neighbors, so $\vec N[u, \la_1] \cap \vec N[v, \la_2] = \vec e_{(u, \la_1)} + \vec e_{(v, \la_2)} \in \Z^{C_{\G \times \La}}$. For any other edge $v - w$ in $\G$, then, we have 
    \[(\vec e_{(u, \la_1)} + \vec e_{(v, \la_2)}) - (\vec e_{(v, \la_2)} + \vec e_{(w, \la_1)}) = \vec e_{(u, \la_1)} - \vec e_{(w, \la_1)} \in \Z^{C_{\G \times \La}}.\]
    Now $\G$ is non-bipartite, meaning there is an odd cycle we can go around so any two vertices are connected by an even-length path and for any two vertices $u, v \in \G$ we have $\vec e_{(u, \la_1)} - \vec e_{(v, \la_1)} \in \Z^{C_{\G \times \La}}$.

    Furthermore, if $u - v$ is an edge in $\G$ and $\la_1 - \la_2$ an edge in $\La$, we have the vector 
    \[\x(u, v, \la_1, \la_2) := (\vec e_{(u, \la_1)} - \vec e_{(v, \la_1)}) - (\vec e_{(u, \la_2)} - \vec e_{(v, \la_2)}) = \vec N[u, \la_1] \cap \vec N[v, \la_2] - \vec N[u, \la_2] \cap \vec N[v, \la_1] \in \Z^{C_{\G \times \La}}.\]
    Using the identity
    \[\x(u, v, \la_1, \la_2) + \x(u, v, \la_2, \la_3) = \x(u, v, \la_1, \la_3)\]
    along any path from $\la_1$ to $\la \in \La$, we find $\x(u, v, \la_1, \la) \in \Z^{C_{\G \times \La}}$, and subtracting this from $\vec e_{(u, \la_1)} - \vec e_{(v, \la_1)}$ gives $\vec e_{(u, \la)} - \vec e_{(v, \la)} \in \Z^{C_{\G \times \La}}$.
\end{proof}

\begin{proposition}
    \label{prop:non-bip-tensor}    
    If $\G$ and $\La$ are non-bipartite graphs that each have an edge that is not part of a 3-cycle, then $\G \times \La$ is 1/2-RA or RA.
\end{proposition}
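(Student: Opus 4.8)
The plan is to show that $\G \times \La$ is negatively neighborly and then exhibit an edge of $\G \times \La$ that lies in no $3$-cycle, so that the ``in particular'' clause of \cref{thm:neg-nbr-1/mu} forces $\mu \le 2$.

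First I would apply \cref{lem:edge-wo-tri} twice, exploiting the symmetry $\G \times \La \cong \La \times \G$. Since $\La$ has an edge not in a $3$-cycle and $\G$ is non-bipartite, the lemma gives $\vec e_{(u, \la)} - \vec e_{(v, \la)} \in \Z^{C_{\G \times \La}}$ for all $u, v \in \G$ and $\la \in \La$. Swapping the roles of the two factors (now using that $\G$ has an edge not in a $3$-cycle and $\La$ is non-bipartite) gives $\vec e_{(u, \la)} - \vec e_{(u, \mu)} \in \Z^{C_{\G \times \La}}$ for all $u \in \G$ and $\la, \mu \in \La$. Adding a first-coordinate difference to a second-coordinate difference then yields $\vec e_{(u, \la)} - \vec e_{(v, \mu)} \in \Z^{C_{\G \times \La}}$ for every pair of vertices, so every pair is negative and in particular $\G \times \La$ is negatively neighborly. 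Because both factors are non-bipartite, $\G \times \La$ is connected, and since every vertex has positive degree we have $\delta \ne 0$, so \cref{thm:neg-nbr-1/mu} genuinely applies and $\G \times \La$ is $1/\mu$-RA.

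Next I would produce a triangle-free edge. Let $a_1 - a_2$ be the edge of $\G$ lying in no $3$-cycle and $b_1 - b_2$ the corresponding edge of $\La$, and consider the edge $(a_1, b_1) - (a_2, b_2)$ of $\G \times \La$. Any common neighbor $(c_1, c_2)$ of its endpoints would need $c_1$ adjacent to both $a_1$ and $a_2$ and $c_2$ adjacent to both $b_1$ and $b_2$; but $a_1 - a_2$ has no common neighbor in $\G$, so no such $(c_1, c_2)$ exists and this edge lies in no $3$-cycle. The only real content is this verification: the definition of the tensor product makes a triangle on the pair $(a_1, b_1)$, $(a_2, b_2)$ force simultaneous triangles on $a_1 - a_2$ in $\G$ and on $b_1 - b_2$ in $\La$, each excluded by hypothesis. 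Consequently $|N[(a_1, b_1)] \cap N[(a_2, b_2)]| = 2$, so $\kappa \mid 2$ and hence $\mu = \gcd(\delta, \kappa) \le 2$; equivalently, this is exactly the ``in particular'' clause of \cref{thm:neg-nbr-1/mu}. Thus $\G \times \La$ is $1/2$-RA or RA, and the main subtlety is simply ensuring the lemma is applied correctly in both orientations and that connectedness (from non-bipartiteness) legitimizes the reduction to a negatively neighborly graph.
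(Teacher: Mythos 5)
Your proof is correct and takes essentially the same route as the paper: both apply \cref{lem:edge-wo-tri} in both orientations to show $\G \times \La$ is negatively neighborly, and both exploit the fact that the edge of $\G \times \La$ built from the triangle-free edges has closed-neighborhood intersection of size exactly $2$. The only difference is which packaged result closes the argument --- the paper derives $2\vec e_{(u,\la_1)} \in \Z^{C_{\G \times \La}}$ directly and cites \cref{thm:half-ra}, whereas you feed $\kappa \mid 2$ into the ``in particular'' clause of \cref{thm:neg-nbr-1/mu} --- and these amount to the same computation.
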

\begin{proof}
    Apply \cref{lem:edge-wo-tri} to both graphs shows that $\G \times \La$ is negatively neighborly and if $u - v$ is an edge in $\G$ that is not part of a triangle, then for any edge $\la_1 - \la_2$ in $\La$ we have $\vec N[u, \la_1] \cap \vec N[v, \la_2] = \vec e_{(u, \la_1)} + \vec e_{(v, \la_2)} \in \Z^{C_{\G \times \La}}$ and again applying \cref{lem:edge-wo-tri} twice to this expression, we have $2\vec e_{(u, \la_1)} \in \Z^{C_{\G \times \La}}$ so $\G \times \La$ is 1/2-RA or RA (by \cref{thm:half-ra}).
\end{proof}

In conclusion, we have shown that various tensor products are $1/\mu$-RA and described $\mu$: the tensor product of a bipartite graph with a non-bipartite graph in \cref{thm:tensor-bipartite}, that of two bipartite graphs in \cref{thm:tensor-2bipartite}, an arbitrary tensor product of complete graphs in \cref{thm:tensor-completes}, and more. We have not been able to deduce such a general result for arbitrary tensor products of two non-bipartite graphs, though we will return to this question again briefly in \cref{rmk:nonbipartite-girth4-tensor}, where we show that the result is 1/2-RA or RA if at least one of the graphs has girth 4 or more. We leave the general question an open problem (\cref{prob:nonbip-tensor}).


\section{Girth $4^+$ graphs are almost RA}\label{sec:girth-4}

    In \cite[Cor. 5.13]{FirstPaper}, we showed that every graph of girth 5 or higher is RA. On the other hand, \cite[Cor. 6.14]{FirstPaper} showed that the cube graph $Q_n$ is not RA when $n$ is odd. In this section, we will further explore graphs of girth 4.
    Note that a connected girth 4 graph must automatically be neighborhood-distinguishable: if $u$ and $v$ have the same closed neighborhood, then they must be adjacent and have a common neighbor $w$ (since otherwise there would be only two vertices), which would give us a triangle. 

    With the theory already developed, girth 4 graphs will be easy to study due to the following simple result. 

    \begin{proposition} \label{prop:c-on-edge}
    A girth 4 graph is positively neighborly.
    \end{proposition}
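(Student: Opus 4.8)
The plan is to read off positive neighborliness directly from the rows of $C_\G$, exploiting the fact that the vector $\bbe_u + \bbe_v$ we need for each edge is exactly the intersection row $\vec N[u] \cap \vec N[v]$ once the girth hypothesis is applied. The definition of \emph{positively neighborly} requires $\bbe_u + \bbe_v \in \Rg$ for every edge $\{u,v\}$, and since $\vec N[u] \cap \vec N[v]$ is by construction one of the rows of $C_\G$, it suffices to show this particular intersection equals $\{u,v\}$.

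First I would record the one structural fact in play: a graph of girth $4$ contains no $3$-cycle. Then, for an arbitrary edge $\{u,v\}$, I would compute $N[u] \cap N[v]$. Both $u$ and $v$ belong to it: adjacency gives $u \in N[v]$ and $v \in N[u]$, while $u \in N[u]$ and $v \in N[v]$ trivially. Any further vertex $w$ in the intersection would be a common neighbor of $u$ and $v$, yielding a triangle $u - v - w - u$, which the girth-$4$ hypothesis forbids. Hence $N[u] \cap N[v] = \{u,v\}$ exactly.

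With this identification, $\vec N[u] \cap \vec N[v] = \bbe_u + \bbe_v$, and since this is a row of $C_\G$ we conclude $\bbe_u + \bbe_v \in \Rg$. As $\{u,v\}$ was an arbitrary edge, every edge is positive and $\G$ is positively neighborly. I expect no real obstacle here: the only point requiring a moment's care is verifying that no third vertex can enter $N[u] \cap N[v]$, which is immediate from the absence of triangles.
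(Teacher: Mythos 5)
Your proof is correct and is exactly the paper's argument: the paper likewise notes that for adjacent $u,v$ in a girth-4 graph, $N[u] \cap N[v] = \{u,v\}$, so the row $\vec N[u] \cap \vec N[v] = \bbe_u + \bbe_v$ of $C_\G$ witnesses that every edge is positive. You have simply spelled out the triangle-freeness step that the paper leaves implicit.
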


    \begin{proof}
    This follows from the fact that if $u$ and $v$ are adjacent vertices in a graph of girth 4, the intersection of their closed neighborhoods consists solely of $u$ and $v$.     
    \end{proof}

    \begin{theorem} \label{thm:girth-4-finite-index}
    Suppose $\G$ is a graph of girth 4. 
    \begin{enumerate}
        \item If $\G$ is not bipartite, then it is either RA or $1/2$-RA, and it is $1/2$-RA if and only if every vertex has odd degree and every pair of vertices $2$ apart have an even number of common neighbors.
        \item If $\G$ is bipartite, then $\G$ is $1/\mu$-RA, where $\mu = \gcd(\delta,\kappa)$, with
        \[ \delta = \gcd(\{\deg(v) - 1 : v \in \G\}) \textrm{ and}\]
    \[ \kappa = \gcd(\{|N[u] \cap N[v]| : u,v \in \G, d(u,v) = 2\}).\]
    \end{enumerate}
    \end{theorem}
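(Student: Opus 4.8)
The plan is to reduce the whole statement to \cref{cor:pos-nbr-1/mu}. By \cref{prop:c-on-edge}, any girth 4 graph $\G$ is positively neighborly, so both parts of \cref{cor:pos-nbr-1/mu} apply directly. In the bipartite case (part 2) this is already the full statement: the formulas for $\delta$ and $\kappa$ match verbatim, since \cref{cor:pos-nbr-1/mu} already restricts the $\kappa$-gcd to pairs with $d(u,v) = 2$ (the clause $u \neq v$ being subsumed by $d(u,v)=2$). So for the bipartite case there is nothing further to do.

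For the non-bipartite case (part 1), \cref{cor:pos-nbr-1/mu} yields that $\G$ is $1/2$-RA or RA, and is $1/2$-RA if and only if every vertex has odd degree and \emph{every} pair of vertices has an even number of common neighbors. The one additional observation I would supply is that, in a girth 4 graph, only pairs at distance exactly $2$ can contribute a nonzero count: two adjacent vertices share no common neighbor (such a vertex would close a triangle, contradicting girth 4), and two vertices at distance $\geq 3$ share no common neighbor (a shared neighbor would put them at distance $2$). Since $0$ is even, the condition ``every pair has an even number of common neighbors'' is equivalent to ``every pair at distance $2$ has an even number of common neighbors,'' which is exactly the condition stated in part 1.

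I expect no genuine obstacle here: the substantive work already lives in \cref{prop:c-on-edge} and \cref{cor:pos-nbr-1/mu}. The only point requiring care is the bookkeeping distinction between ``common neighbors'' (the open-neighborhood intersection $N(u) \cap N(v)$) and the closed-neighborhood intersection $N[u]\cap N[v]$ used to build $C_\G$: for adjacent vertices in a girth 4 graph one has $N[u]\cap N[v] = \{u,v\}$ of size $2$, whereas $N(u)\cap N(v) = \emptyset$, so in the ``common neighbors'' phrasing an edge contributes $0$, which keeps the parity condition consistent with \cref{thm:half-ra}.
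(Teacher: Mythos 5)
Your proposal is correct and follows essentially the same route as the paper: the paper's proof is likewise a direct reduction to \cref{cor:pos-nbr-1/mu} (with \cref{prop:c-on-edge} supplying positive neighborliness), noting only that $N[u] \cap N[v] = N(u) \cap N(v)$ for non-adjacent vertices. Your explicit reconciliation of ``every pair'' with ``every pair at distance $2$'' (adjacent pairs and pairs at distance $\geq 3$ contribute zero common neighbors in a girth 4 graph) is just a more careful spelling-out of the same bookkeeping.
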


    \begin{proof}
    This follows directly from \cref{cor:pos-nbr-1/mu} once we note that $N[u] \cap N[v] = N(u) \cap N(v)$ for non-adjacent vertices $u$ and $v$.
    \end{proof}

    In \cite{FirstPaper}, we showed that if $\G$ does not satisfy the ``square completion property'', that every path $u - v - w$ must be able to be completed to a 4-cycle $u - v - w - x - u$, then $\G$ is RA. Not having this property implies that $\kappa = 1$, so \cref{thm:girth-4-finite-index} may be thought of as a generalization of and addition to this fact. In particular, \cref{thm:girth-4-finite-index} implies that if, for example, a girth 4 graph has two vertices with coprime valency, then the graph is RA. It also puts strong restrictions on the structure of a girth 4 graph whose RA matrix $C_\G$ has an elementary divisor of $k$. In particular, such a graph must have the degree of every vertex congruent to $1 \pmod k$, and the number of common neighbors of any vertices a distance 2 apart must be divisible by $k$.

    \begin{example} \label{odd-cube-half-ra}
    The cube graph $Q_d$ is $1/2$-RA for odd $d \geq 3$, and the folded cube graph $\square_d$ is $1/2$-RA for odd $d \geq 5$. Indeed, every vertex of each graph has odd degree, and vertices at a distance of 2 have exactly 2 common neighbors. In fact, as we will see in \cref{thm:deg3notRA=Q3}, the cube graph $Q_3$ is the only non-RA girth 4 graph with vertices of ``small'' degree
    \end{example}


    A natural question to ask at this point is whether there are graphs of girth 4 with an arbitrarily large elementary divisor. Indeed, we can now give a concrete example of a $1/\mu$-RA graph of girth 4 for each $\mu \geq 2$.

    \begin{corollary}\label{cor:crown-graph}
    The crown graph $\Cr(2n)$ on $2n$ vertices (with $n \geq 4$) is a girth 4 graph that is $1/(n-2)$-RA. 
    \end{corollary}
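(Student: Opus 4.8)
The plan is to recognize $\Cr(2n)$ as a bipartite graph of girth $4$ and then invoke the bipartite case of \cref{thm:girth-4-finite-index}, reducing the entire statement to two gcd computations.

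First I would record the structure. Writing $U = \{1, \dots, n\}$ and $V = \{1', \dots, n'\}$, every edge joins an unprimed vertex to a primed one, so $\Cr(2n)$ is bipartite with parts $U$ and $V$. To pin down the girth, note that bipartiteness rules out all odd cycles (in particular triangles), so the girth is at least $4$; and for $n \geq 4$ the four distinct indices $1,2,3,4$ yield a $4$-cycle $1 - 2' - 3 - 4' - 1$, since each consecutive pair has distinct indices and is therefore an edge. Hence the girth is exactly $4$, which places us in part (2) of \cref{thm:girth-4-finite-index}; it then remains only to compute $\delta$ and $\kappa$.

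For $\delta$: each vertex $i$ is adjacent to every $j'$ with $j \neq i$, and symmetrically for primed vertices, so every vertex has degree $n-1$. Thus $\delta = \gcd(\{\deg(v) - 1\}) = n-2$. For $\kappa$: in a bipartite graph every distance-$2$ pair lies in a common part, so I would take distinct $i, k \in U$ (the $V$ case being symmetric). These are non-adjacent, so $N[i] \cap N[k] = N(i) \cap N(k) = \{j' : j \neq i,\, j \neq k\}$, a set of size $n-2$. Every distance-$2$ pair therefore contributes exactly $n-2$, giving $\kappa = n-2$. Consequently $\mu = \gcd(\delta, \kappa) = \gcd(n-2, n-2) = n-2$, and $\Cr(2n)$ is $1/(n-2)$-RA.

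There is no genuine obstacle here; the content is entirely the two elementary counts above. The only points requiring care are confirming that every distance-$2$ pair really has exactly $n-2$ common neighbors and that no such pair is accidentally adjacent. I would also flag the role of the hypothesis $n \geq 4$: for $n = 3$ the crown graph $\Cr(6)$ is the $6$-cycle $C_6$, whose girth is $6$ rather than $4$, so \cref{thm:girth-4-finite-index} does not apply (and $n - 2 = 1$ would merely assert RA). The uniform degree and uniform distance-$2$ intersection size are precisely what make both gcds collapse to the single value $n-2$, and since $n \geq 4$ gives $n - 2 \geq 2$, this exhibits a genuinely nontrivial divisor.
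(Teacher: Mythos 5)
Your proof is correct and follows essentially the same route as the paper, which likewise deduces the result from \cref{thm:girth-4-finite-index} using the facts that $\Cr(2n)$ is bipartite of girth $4$, every vertex has degree $n-1$, and distance-$2$ pairs have exactly $n-2$ common neighbors. Your added details (the explicit $4$-cycle, the check that distance-$2$ pairs lie in a common part, and the remark about $n = 3$) are accurate fillings-in of what the paper leaves implicit.
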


    \begin{proof}
    This follows from \cref{thm:tensor-completes} or \cref{thm:girth-4-finite-index}, since the crown graph is a bipartite graph of girth 4, every vertex has degree $n-1$, and vertices that are 2 apart have $n-2$ common neighbors.
    \end{proof}

    Let us now show that for any $\mu > 1$, any bipartite graph of girth 4 that is $1/\mu$-RA gives rise to many infinite families of such graphs.

    \begin{corollary}
    \label{cor:xK_{k+2}}
    Suppose $\Lambda$ is a bipartite girth 4 graph that is $1/\mu$-RA. Then $\G = \Lambda \times K_{\nu+2}$ is a connected, bipartite girth 4 graph that is $1/\gcd(\mu, \nu)$-RA.
    \end{corollary}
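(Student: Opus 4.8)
The plan is to dispatch the three structural claims quickly and then read off the elementary divisor by specializing \cref{thm:tensor-bipartite} to the non-bipartite factor $K_{\nu+2}$; note that connectedness forces $\nu \ge 1$, so $K_{\nu+2}$ is indeed non-bipartite. Bipartiteness of $\G = \Lambda \times K_{\nu+2}$ is inherited from $\Lambda$: a $2$-coloring of $\Lambda$ pulls back along the first coordinate, since any edge of the product requires adjacency in the $\Lambda$-coordinate. Connectedness follows from the standard fact that $\Lambda \times K_{\nu+2}$ is connected when $\Lambda$ is connected and $K_{\nu+2}$ is connected and non-bipartite. For the girth, bipartiteness makes the girth even, hence at least $4$; and a $4$-cycle $u - x - w - y - u$ in $\Lambda$ (which exists since $\Lambda$ has girth $4$), together with any two distinct vertices $i \neq j$ of $K_{\nu+2}$, yields the $4$-cycle $(u,i) - (x,j) - (w,i) - (y,j) - (u,i)$ in $\G$, so the girth is exactly $4$.

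For the divisor, I would apply \cref{thm:tensor-bipartite} with the non-bipartite graph taken to be $K_{\nu+2}$ and the bipartite graph $\Lambda$. Every vertex of $K_{\nu+2}$ has degree $\nu+1$, giving $\delta = \gcd\{(\nu+1)\deg(\lambda) - 1 : \lambda \in \Lambda\}$; and in $K_{\nu+2}$ one has $|N(u) \cap N(v)| = \nu$ for $u \neq v$ and $\nu+1$ for $u = v$. Writing $P = \gcd\{|N(\lambda_1) \cap N(\lambda_2)| : d(\lambda_1,\lambda_2)=2\}$ and $Q = \gcd\{\deg(\lambda) : \lambda \in \Lambda\}$, I would sort the pairs defining $\kappa$ into the three cases $(u=v,\ \lambda_1 \neq \lambda_2)$, $(u \neq v,\ \lambda_1 = \lambda_2)$, and $(u \neq v,\ \lambda_1 \neq \lambda_2)$, which contribute $(\nu+1)P$, $\nu Q$, and $\nu P$ respectively. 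Since $\gcd(\nu,\nu+1)=1$, the first and third collapse to $P$, so $\kappa = \gcd(P,\ \nu Q)$.

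It then remains to show $\gcd(\delta,\kappa) = \gcd(\mu,\nu)$. By \cref{thm:girth-4-finite-index} (bipartite case), $\mu = \gcd(d_\Lambda, P)$ where $d_\Lambda = \gcd\{\deg(\lambda)-1 : \lambda \in \Lambda\}$, so the target equals $\gcd(d_\Lambda, P, \nu)$ while the left-hand side equals $\gcd(\delta, P, \nu Q)$. I would prove equality by mutual divisibility, the single arithmetic input being that $\delta$ is coprime to $Q$: if a prime $p \mid Q$ then $p \mid \deg(\lambda)$ for all $\lambda$, so $(\nu+1)\deg(\lambda)-1 \equiv -1 \pmod p$ and $p \nmid \delta$. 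Setting $D = \gcd(\delta, P, \nu Q)$ and $E = \gcd(d_\Lambda, P, \nu)$, the identity $(\nu+1)\deg(\lambda)-1 = \nu\deg(\lambda) + (\deg(\lambda)-1)$ shows $E \mid \delta$ (from $E \mid \nu$ and $E \mid d_\Lambda$), giving $E \mid D$; conversely $D \mid \delta$ forces $\gcd(D,Q)=1$, so $D \mid \nu Q$ yields $D \mid \nu$, and the same identity then gives $D \mid \deg(\lambda)-1$, i.e. $D \mid d_\Lambda$, so $D \mid E$. Finiteness of $\mu$ (i.e. $\delta \neq 0$) is already guaranteed by \cref{thm:tensor-bipartite} because $K_{\nu+2} \neq K_2$.

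The only genuine obstacle is the bookkeeping in the $\kappa$-computation: one must remember that the $\lambda_1 = \lambda_2$ terms inject the degree gcd $Q$ into $\kappa$, and then check that this extra factor is harmless precisely because $\delta$ (and $d_\Lambda$) are coprime to $Q$. Once \cref{thm:tensor-bipartite} and \cref{thm:girth-4-finite-index} are in hand, everything else is routine gcd manipulation.
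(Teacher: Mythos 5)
Your proof is correct and follows essentially the same route as the paper: both apply \cref{thm:tensor-bipartite} with the non-bipartite factor $K_{\nu+2}$, compare the resulting $\delta$ and $\kappa$ with the quantities from \cref{thm:girth-4-finite-index} for $\Lambda$, and finish by gcd manipulation (the paper even uses the same key observations, e.g.\ that the $\lambda_1 = \lambda_2$ terms contribute $\nu\deg(\lambda)$ and that subtracting the $(\nu+1)$- and $\nu$-multiples recovers $\kappa_\Lambda$ and $\nu$). Your write-up is in fact slightly more complete, since you organize $\kappa = \gcd(P, \nu Q)$ explicitly and prove the reverse divisibility $\gcd(\mu,\nu) \mid \gcd(\delta_\G,\kappa_\G)$ via the coprimality of $\delta$ and $Q$, a step the paper dismisses as ``easy to check.''
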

    
    \begin{proof}
    The bipartition of $\Lambda = \Lambda_1 \sqcup \Lambda_2$ induces a bipartition on $\Lambda \times K_{\mu + 2}$ as $(\Lambda_1 \times K_{\mu + 2}) \sqcup (\Lambda_2 \times K_{\mu + 2})$. In general, if $\La$ is any girth 4 graph, its tensor product with another graph has girth 4: the resulting graph $\G$ has no 3-cycles, and any 4-cycle in $\Lambda$ induces a 4-cycle in $\G$ (by just alternating between two neighbors in the other coordinate), so $\G$ has girth 4. Now, $\Lambda$ being $1/\mu$-RA and girth 4 implies, by \cref{thm:girth-4-finite-index}, that $\gcd(\delta_\Lambda, \kappa_\Lambda) = \mu$. Letting $\delta_\G, \kappa_\G$ be the numbers from \cref{thm:tensor-bipartite} and $\delta_\Lambda, \kappa_\Lambda$  from \cref{thm:girth-4-finite-index}, we have
    \[\delta_\G = \gcd\{(\nu + 1)\deg(\lambda) - 1 : \lambda \in \Lambda\}\]
    since $\deg v = \nu + 1$ for all $v \in K_{\nu + 2}$, and $\kappa_\G$ is the $\gcd$ of the two sets 
    \[S_1 = \{\nu \cdot |N(\lambda_1) \cap N(\lambda_2)|: \lambda_1, \lambda_2 \in \Lambda_i, i = 1, 2\},\]
    \[S_2 = \{(\nu + 1)|N(\lambda_1) \cap N(\lambda_2)|: \lambda_1 \ne \lambda_2 \in \Lambda_i, i = 1, 2, d(\lambda_1, \lambda_2) = 2\}\]
    since if we take $u \ne v$ in the expression for $\kappa_\G$, we find $|N(u) \cap N(v)| = \nu$ but if $u = v$ then $|N(u) \cap N(v)| = \nu + 1$ and we cannot have $\lambda_1 = \lambda_2$, which implies that if they have common intersection of their neighborhoods, they must be a distance 2 apart and in the same part of $\Lambda$. Taking $\lambda = \lambda_1 = \lambda_2$ in $S_1$ gives $\nu \deg(\lambda)$ and combining that with $\delta_\G$, we see that $\gcd(\delta_\G, \kappa_\G)$ must divide $\gcd\{\deg(\lambda) - 1 : \lambda \in \Lambda\} = \delta_\Lambda$. Subtracting the expressions in $S_1$ and $S_2$ for $\lambda_1 \ne \lambda_2$, we find $\kappa_\G \mid \gcd\{|N(\lambda_1) \cap N(\lambda_2)|: \lambda_1 \ne \lambda_2 \in \Lambda_i, i = 1, 2, d(\lambda_1, \lambda_2) = 2\}$, thus $\kappa_\G$ must divide $\kappa_\Lambda$. Also, taking any vertex $\lambda \in \Lambda$, we see that $\gcd(\delta_\G, \kappa_\G)$ divides $\nu \deg(\lambda)$ and also $\deg(\lambda) - 1$, hence also $\nu \deg(\lambda) - \nu (\deg(\lambda) - 1) = \nu$. Thus, $\gcd(\delta_\G, \kappa_\G) \mid \gcd(\mu, \nu)$, and it is easy to check the converse holds too 
    so $\gcd(\delta_\G, \kappa_\G) = \gcd(\mu, \nu)$.
    \end{proof}

    \begin{remark}
    \label{rmk:cor-proves-thm}
        \cref{cor:xK_{k+2}} shows that the tensor product $K_2 \times K_{m_2} \times \cdots \times K_{m_n}$ in part (a) of \cref{thm:tensor-completes} is $1/\mu$-RA, where $\mu = \gcd\{m_i - 2 : i = 2, \dots, n\}$ since $K_2 \times K_{m_2}$ is the crown graph $\Cr(2m_2)$, which is $1/(m_2 - 2)$-RA.    
    \end{remark}

    \begin{remark}
    \label{rmk:nonbipartite-girth4-tensor}
        If $\G$ and $\La$ are non-bipartite, then $\G \times \La$ is non-bipartite, 
        and if $\La$ is in addition girth 4 or more, then $\G \times \La$ is a non-bipartite graph of girth 4 (pick a $4$-cycle $\la_1 - \la_2 - \la_3 - \la_4$ in $\La$ and two neighbors $u, v \in \G$; we then have the 4-cycle $(u, \la_1) - (v, \la_2) - (u, \la_3) - (v, \la_4) - (u, \la_1)$), which means we can apply \cref{thm:girth-4-finite-index} to it. That means $\G \times \La$ is RA or 1/2-RA. Since $\deg(v, \la) = \deg(v) \deg(\la)$ for $v\in \G, \la \in \La$ and for $(u, \la_1)$ and $(v, \la_2)$ a distance 2 away we have $|N(u, \la_1) \cap N(v, \la_2)| = |N(u) \cap N(v)| \cdot |N(\la_1) \cap N(\la_2)|$ so $\G \times \La$ is RA unless $\deg v$ and $\deg \la$ are always odd and at least one of $\G, \La$ has every pair of vertices a distance 2 apart sharing an even number of common neighbors. Note that if $\La$ is a graph of girth 5 or higher, its cartesian product with any other connected graph is RA (see \cref{cor:girth-5}), while the same is not always true of tensor products.
    \end{remark}

    \begin{corollary}
    For each $\mu \geq 2$, there are infinitely many bipartite graphs of girth $4$ that are $1/\mu$-RA.
    \end{corollary}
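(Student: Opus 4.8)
The plan is to bootstrap from a single known example and then apply the tensor-product machinery of \cref{cor:xK_{k+2}} to spread it into an infinite family while preserving the elementary divisor. By \cref{cor:crown-graph}, the crown graph $\Lambda = \Cr(2\mu + 4)$ is a bipartite graph of girth $4$ that is $1/\mu$-RA (here $n = \mu+2 \geq 4$ since $\mu \geq 2$, so the hypothesis of \cref{cor:crown-graph} is met, and $n - 2 = \mu$). This single graph supplies the seed; the task is to manufacture infinitely many more with the same $\mu$.

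First I would recall the exact content of \cref{cor:xK_{k+2}}: if $\Lambda$ is a bipartite girth $4$ graph that is $1/\mu$-RA, then $\Lambda \times K_{\nu+2}$ is again a connected bipartite graph of girth $4$, and it is $1/\gcd(\mu,\nu)$-RA. To keep the elementary divisor fixed at $\mu$ I want $\gcd(\mu,\nu) = \mu$, which holds precisely when $\mu \mid \nu$. This observation is the crux of the construction, and it is essentially immediate.

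Next I would take $\nu = k\mu$ for each positive integer $k$ and set $\Gamma_k = \Cr(2\mu+4) \times K_{k\mu + 2}$. For $k \geq 1$ we have $k\mu + 2 \geq 3$, so $K_{k\mu+2}$ is non-bipartite; since $\Lambda$ is bipartite and $K_{k\mu+2}$ is not, their tensor product is connected (the tensor product of two connected graphs is disconnected only when both are bipartite). Hence \cref{cor:xK_{k+2}} applies and gives that each $\Gamma_k$ is a connected bipartite graph of girth $4$ that is $1/\gcd(\mu, k\mu)$-RA, which equals $1/\mu$-RA because $\mu \mid k\mu$.

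Finally I would verify distinctness: since $\Gamma_k$ has $(2\mu+4)(k\mu+2)$ vertices, the vertex count strictly increases with $k$, so the $\Gamma_k$ are pairwise non-isomorphic and we obtain infinitely many $1/\mu$-RA bipartite girth $4$ graphs. I do not expect any genuine obstacle here: everything reduces to the cited corollaries together with the elementary fact $\mu \mid \nu \Rightarrow \gcd(\mu,\nu) = \mu$. The only points requiring mild care are choosing $\nu \geq 1$ so that the tensor product remains connected, and choosing $\nu$ a multiple of $\mu$ so that $\gcd(\mu,\nu)$ does not collapse to a proper divisor of $\mu$.
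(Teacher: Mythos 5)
Your proposal is correct and is essentially the paper's own argument: the corollary is stated immediately after \cref{cor:crown-graph} and \cref{cor:xK_{k+2}} precisely so that it follows by seeding with the crown graph $\Cr(2\mu+4)$ and tensoring with complete graphs $K_{\nu+2}$ where $\mu \mid \nu$, exactly as you do. Your added checks (connectedness via non-bipartiteness of $K_{k\mu+2}$ and distinctness via vertex counts) are sound and fill in the routine details the paper leaves implicit.
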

    
    Note that the crown graph on 8 vertices is precisely the cube graph $Q_3$, which is the smallest (connected, neighborhood-distinguishable) non-RA graph. Actually, the crown graphs are precisely the unique smallest $1/k$-RA girth 4 graphs:

    \begin{theorem}\label{thm:crown-graph-smallest}
    For each $k \geq 2$, the crown graph $\Cr(2k + 4)$ is the unique smallest graph $\G$ of girth $4$ that is $1/k$-RA. 
    \end{theorem}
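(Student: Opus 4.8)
The plan is to reduce everything to a vertex-counting estimate powered by \cref{thm:girth-4-finite-index}. First I would dispose of the non-bipartite case. By part (1) of \cref{thm:girth-4-finite-index}, a non-bipartite girth 4 graph is RA or $1/2$-RA, so it can only be $1/k$-RA when $k = 2$. Hence for $k \ge 3$ every $1/k$-RA girth 4 graph is bipartite, and the problem becomes purely bipartite. For $k = 2$ I would instead note that any $1/2$-RA graph is non-RA and invoke the fact (observed just above, via \cref{thm:deg3notRA=Q3}) that $Q_3 = \Cr(8)$ is the unique smallest non-RA graph: a non-bipartite $1/2$-RA girth 4 graph is non-RA, hence has at least $8$ vertices, and the only non-RA graph on $8$ vertices, namely $Q_3$, is bipartite. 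So non-bipartite examples are strictly larger and never compete for the minimum, and the whole problem collapses to the bipartite situation.

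For the bipartite case, let $\G$ be bipartite of girth 4 and $1/k$-RA, with parts $X \sqcup Y$, $p = |X|$, $q = |Y|$. By \cref{thm:girth-4-finite-index}(2) we have $k = \gcd(\delta,\kappa)$, so $k \mid \delta$ and $k \mid \kappa$; thus \emph{(i)} $\deg(v) \equiv 1 \pmod k$ for every vertex $v$, and \emph{(ii)} $|N[u]\cap N[v]| \equiv 0 \pmod k$ whenever $d(u,v)=2$. I would then establish two local facts. First, there are no pendant vertices: if $\deg(a)=1$ with neighbor $w$, then (since $\G$ contains a $4$-cycle and is connected) $w$ has another neighbor $c \in X$, giving $d(a,c)=2$ and $N(a)\cap N(c)=\{w\}$ of size $1$, which contradicts (ii) as $k \ge 2$. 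Together with (i), this forces every degree to be $\ge k+1$. Second, no vertex is joined to its entire opposite part: if $a\in X$ had $N(a)=Y$, then any other $c\in X$ has $N(c)\subseteq N(a)$, so $|N(a)\cap N(c)| = \deg(c) \equiv 1 \pmod k$ while $d(a,c)=2$, again contradicting (ii). Hence $\deg(a) \le q-1$ for $a \in X$ and $\deg(w) \le p-1$ for $w \in Y$.

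Combining the two bounds, $k+1 \le \deg(a) \le q-1$ yields $q \ge k+2$, and symmetrically $p \ge k+2$, so $|\G| = p+q \ge 2k+4$, exactly the size of $\Cr(2k+4)$. For uniqueness at the minimum I would push the equality case: $p+q = 2k+4$ forces $p = q = k+2$ and every degree to equal $k+1$. Then each vertex of $X$ has precisely one non-neighbor in $Y$ and vice versa; since non-adjacency is symmetric, the map sending a vertex to its unique non-neighbor is an involutive bijection between $X$ and $Y$, i.e. the non-edges form a perfect matching. Thus $\G = K_{k+2,k+2}$ with a perfect matching deleted, which is precisely $\Cr(2k+4)$. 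Finally, $\Cr(2k+4)$ really is $1/k$-RA of girth 4 by \cref{cor:crown-graph}, so the minimum is both attained and unique.

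I expect the genuinely delicate point to be the $k=2$ non-bipartite case, which is why I would lean on the external classification that $Q_3$ is the smallest non-RA graph rather than redo a direct count: a non-bipartite $1/2$-RA girth 4 graph must have all degrees odd and $\ge 3$ and all distance-$2$ common-neighbor counts even, and ruling out every such graph on fewer than $8$ vertices by hand is a fiddly case analysis best avoided. Everything in the bipartite case is elementary once facts (i) and (ii) are in hand; the only real care needed is in the equality step, where one must verify that the ``unique non-neighbor'' assignment is genuinely a bijection so that the deleted edges form a perfect matching and the graph is forced to be the crown.
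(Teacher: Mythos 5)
Your proposal is correct in its core and takes a genuinely different route from the paper. The paper never splits into bipartite and non-bipartite cases: it notes that either case of \cref{thm:girth-4-finite-index} gives the same two divisibility conditions ($\deg(v) \equiv 1 \pmod k$ and $k \mid |N[u] \cap N[v]|$ for $d(u,v) = 2$), and then runs a single local count around a fixed vertex $v$ on a graph with at most $2k+4$ vertices: the admissible degrees are $k+1$, $2k+1$, or ($k=2$ only) $3k+1$; the last two are eliminated (the degree-$(2k+1)$, $k=2$ subcase by an ad hoc parity contradiction with five common neighbors), and with all degrees equal to $k+1$ the count of distance-$2$ vertices and their missing edges forces the crown. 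Your bipartite argument is more global and arguably cleaner: the sandwich $k+1 \le \deg(a) \le |Y| - 1$ gives $|X|, |Y| \ge k+2$ at once, and your equality analysis (the non-edges between the parts form a perfect matching, so $\G$ is $K_{k+2,k+2}$ minus a perfect matching) is a tidier uniqueness step than the paper's edge-completion argument. A genuine bonus: you explicitly exclude pendant vertices, a case the paper's proof silently skips --- its list of degrees $\equiv 1 \pmod k$ omits degree $1$, which your distance-$2$/single-common-neighbor argument is needed to rule out.

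The one real gap is your $k=2$ non-bipartite case. The fact you invoke --- that $Q_3$ is the unique smallest connected, neighborhood-distinguishable non-RA graph --- is \emph{not} a consequence of \cref{thm:deg3notRA=Q3}: that theorem only classifies non-RA girth-4 graphs whose vertices have degree $\le 3$, and says nothing about girth-4 graphs containing vertices of degree $5$ or $7$, nor about girth-3 graphs. The classification you actually need is the paper's remark preceding the theorem, which rests on exhaustive computation (all graphs on $\le 7$ vertices are RA, plus the $8$-vertex census in the data section). So as attributed, this step fails. It can be repaired in two ways: either cite the computational fact directly, at the cost of making your proof depend on computer search where the paper's is self-contained; or run the paper's uniform counting in the non-bipartite $1/2$-RA setting, where \cref{thm:girth-4-finite-index}(1) gives odd degrees and even distance-$2$ common-neighbor counts: on at most $8$ vertices the degrees lie in $\{3,5,7\}$, degree $7$ forces $K_8$ (girth 3), degree $5$ yields the parity contradiction, and if all degrees are $3$ then \cref{thm:deg3notRA=Q3} genuinely applies and returns $Q_3$, which is bipartite --- a contradiction. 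With either repair the rest of your argument goes through.
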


    \begin{proof}
    Suppose $\G$ is a graph of girth 4 that is $1/k$-RA. By \cref{thm:girth-4-finite-index}, the degree of each vertex is congruent to 1 (mod $k$). If $\G$ has at most $2k+4$ vertices, then the degree of each vertex is at most $2k+3$, and thus the degree must be either $k+1$ or $2k+1$, or $3k+1$ if $k = 2$. In this last case, we have $k = 2$ and the vertices have degree 7, so there must be exactly $2k+4 = 8$ vertices; this implies that the graph is $K_8$, which does not have girth 4. So each vertex has degree at most $2k+1$.
    
    Suppose there is a vertex $v$ of degree $2k+1$. Since each neighbor of $v$ has degree at least $k+1$, there must be at least $k$ vertices that are distance 2 from $v$, and this gives us at least $1 + (2k+1) + k = 3k+2$ vertices, and if $k \geq 3$, then this is greater than $2k+4$. If $k = 2$, then we have a graph with exactly $2k+4 = 8$ vertices, where the vertices adjacent to $v$ have degree 3 and are each adjacent to two vertices other than $v$. Since there are only 8 vertices, all neighbors of $v$ must be adjacent to the same two other vertices. However, then both vertices that are distance 2 from $v$ have 5 common neighbors with $v$, whereas \cref{thm:girth-4-finite-index} implies that the number of common neighbors should be even. So if $\G$ has a vertex of degree $2k+1$ and $C_\G$ has an elementary divisor of $k$, then $\G$ must have more than $2k+4$ vertices.

    We may now assume that every vertex has degree $k+1$. Fix a vertex $v$. Its $k+1$ neighbors each have $k$ edges that lead to vertices that are 2 apart from $v$, so there are $k(k+1)$ such edges. Now, by \cref{thm:girth-4-finite-index}, the number of common neighbors between $v$ and vertices that are 2 away is divisible by $k$, and in any case it must be at least 1 and smaller than the degree of $v$. Thus the vertices that are 2 apart from $v$ must have exactly $k$ common neighbors with $v$, and so there must be $k+1$ of them. This gives us $2k+3$ vertices so far, and the vertices that are distance 2 from $v$ are still missing an edge. Clearly then, the only way to add only a single vertex to the graph is to make it adjacent to all vertices that are 2 away from $v$, and what we get is precisely the crown graph.
    \end{proof}

We finish the section with another application of \cref{thm:girth-4-finite-index} to show the cube $Q_3$ is the non-RA graph of girth 4 with the smallest degree of vertices.

\begin{theorem}
\label{thm:deg3notRA=Q3}
    If $\G$ is a connected graph of girth 4 whose vertices have degree $\le 3$ and $\G$ is not RA, then $\G$ is the 3-cube $Q_3$. 
\end{theorem}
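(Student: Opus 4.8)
The plan is to invoke \cref{thm:girth-4-finite-index} to split into the bipartite and non-bipartite cases, and in each case to turn the failure of the RA property into numerical conditions that force $\G$ to be $3$-regular with a rigid local structure realizable only by $Q_3$. First I would record what ``$\G$ is not RA'' means. If $\G$ is non-bipartite, then by part (1) it must be $1/2$-RA, so every vertex has odd degree and every pair of vertices a distance $2$ apart has an even number of common neighbors. If $\G$ is bipartite, then by part (2) we have $\mu = \gcd(\delta,\kappa) > 1$, where $\delta = \gcd(\{\deg(v)-1\})$ and $\kappa = \gcd(\{|N[u] \cap N[v]| : d(u,v) = 2\})$.

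Next I would eliminate all vertices of degree $\neq 3$. Degrees are at most $3$. In the non-bipartite case, odd degree rules out degree $2$; in the bipartite case a degree-$2$ vertex contributes $\deg(v) - 1 = 1$ to $\delta$, forcing $\delta = 1$ and hence $\mu = 1$, a contradiction. So there are no degree-$2$ vertices. To rule out leaves, suppose $v$ has degree $1$ with neighbor $u$; since $\G$ has girth $4$ it contains a $4$-cycle, so $\G \neq K_2$ and $u$ has degree $3$ with two further neighbors $w_1, w_2$. Then $v$ and $w_1$ are at distance $2$ but their only common neighbor is $u$, an odd count of $1$. This violates the even-common-neighbor condition in the non-bipartite case, and makes $\kappa$ odd (so $\mu = 1$) in the bipartite case. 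Hence $\G$ has no leaves, and since all degrees lie in $\{1,2,3\}$, the graph $\G$ is $3$-regular.

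With $\G$ now $3$-regular of girth $4$, I would extract the exact common-neighbor count. In the bipartite case $\delta = 2$, so $\mu > 1$ forces $\kappa$ to be even; in the non-bipartite case the $1/2$-RA criterion already gives that distance-$2$ pairs have an even number of common neighbors. In both cases any pair at distance $2$ has at least one and at most $3$ common neighbors (by $3$-regularity), and evenness pins this down to exactly $2$.

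The last step is the rigidity argument, which I expect to be the main obstacle. Fix a vertex $v$ with neighbors $a,b,c$, which are pairwise non-adjacent since girth $4$ forbids triangles. Each of $a,b,c$ has two further neighbors, giving six edges leaving $\{a,b,c\}$; since every distance-$2$ vertex shares exactly two common neighbors with $v$, each such vertex is adjacent to exactly two of $a,b,c$ and so absorbs two of these six edges. Thus there are exactly three distance-$2$ vertices $x_{ab}, x_{ac}, x_{bc}$, one per pair (the pair $\{a,b\}$ is itself at distance $2$, so has exactly one common neighbor besides $v$). Each $x_{\ast\ast}$ needs one more neighbor; applying the exactly-two-common-neighbors condition to $x_{ab}$ and $x_{ac}$, which share $a$ and are non-adjacent (else a triangle), forces their third neighbors to coincide, and likewise for the other two pairs, so all three share a single vertex $w$. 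Verifying that $w \notin \{v,a,b,c\}$ and that the degrees of all eight vertices are then saturated shows $\G$ is connected on exactly these eight vertices and is precisely $Q_3$. Since $Q_3$ is bipartite, the non-bipartite case cannot actually occur, and the bipartite case yields $\G = Q_3$, completing the proof.
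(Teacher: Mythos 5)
Your proposal is correct and follows essentially the same route as the paper: apply \cref{thm:girth-4-finite-index} to force $3$-regularity and exactly two common neighbors for every distance-$2$ pair, then run the same local rigidity argument that reconstructs $Q_3$ vertex by vertex (your $v, a, b, c, x_{ab}, x_{ac}, x_{bc}, w$ are the paper's $000, 100, 010, 001, 110, 101, 011, 111$). The only difference is presentational: you spell out the elimination of degree-$1$ and degree-$2$ vertices and the bipartite/non-bipartite case split, which the paper compresses into a single sentence.
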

\begin{proof}
Since $\G$ has girth 4 and is not RA, \cref{thm:girth-4-finite-index} says that it must be $1/\mu$-RA, where $\mu = \gcd(\delta, \kappa)$ as defined in \cref{thm:girth-4-finite-index}. Then, since the degree of each vertex is at most 3, $\mu = 2$ and the degree of every vertex must be 3. Furthermore, $\kappa = 2$; that is, vertices that are 2 apart have exactly two common neighbors.

Now, pick an initial vertex of $\G$ and label it 000. It must have degree 3, so we’ll call its neighbors 100, 010, and 001 in arbitrary order. Since vertices that are two apart have exactly two common neighbors, there must be another vertex adjacent to 100 and 010; we'll call it 110. Similarly, there must be a vertex 101 adjacent to 100 and 001, and a vertex 011 adjacent to 010 and 001. Note that these three new vertices are all distinct; if any two of them were the same, then that vertex (say 100) would have 3 common neighbors with 000 rather than 2.

Now 000, 100, 010, 001 all have degree 3 with the listed vertices. That means some vertex, which we'll call 111, must complete the 4-cycle $110 - 100 - 101 - 111 - 110$. Note that 111 can’t be 011 because then $011 - 101 - 001$ would be a 3-cycle. To prove that 111 is connected to 011, note we must have a 4-cycle $011 - 010 - 110 - v - 011$, and since 110 already has degree 3, $v$ must be either 111 or 100, but the latter already has degree 3. Now each of the listed vertices has degree 3 so there can’t be any more connections, and there can’t be any more vertices since $\G$ is connected. \end{proof}

\section{Girth 3 graphs can be arbitrarily far from RA}
\label{sec:girth-3}

We have seen that graphs of girth 5 or higher are RA, and graphs of girth 4 are never far off, with $\Rg$ having full rank and at most one nontrivial elementary divisor. In this section, we will show that for girth 3 graphs $\G$, $\Rg$ can have arbitrarily many nontrivial, nonzero elementary divisors and arbitrarily large nullity over $\Z$. In fact, we will show in \cref{thm:allyourbase} that for any numbers $1 < d_1 \mid d_2 \mid \cdots \mid d_m$ and any nullity choice $n$, it is possible to create a graph $\G$ for which $\Rg$ has the $d_i$ as the elementary divisors and nullity $n$ over $\Z$. The distinguishing fact for girth 4 graphs is the fact that adjacent vertices $u$ and $v$ have no common neighbors, so that $\bbe_u + \bbe_v \in \Rg$ for any edge $u-v$. In girth 3 graphs, adjacent vertices may have any number of common neighbors, and so there are no rows of $\Rg$ that are guaranteed to be simple.


We will start by showing that we can find small examples of graphs, namely having only $2k+5$ vertices, that are $1/k$-RA for any $k \ge 2$. In addition to the small examples it provides, the following construction and proof will be useful in proving \cref{thm:allyourbase}. Recall that the join graph $\G_1 + \G_2$ has vertices that are the disjoint union of the vertices of the two graphs $\G_1$ and $\G_2$ and whose edges are the ones from the graphs $\G_i$ together with those joining every vertex of every $\G_1$ to every vertex of $\G_2$. The pyramid over a graph $\G$ is just its join with the one-vertex graph $K_1$. 

\begin{proposition}
\label{prop:pyramid}
The pyramid $\Delta = \Cr(2k+4) + K_1$ over the crown graph on $2k+4$ vertices is $1/k$-RA.
\end{proposition}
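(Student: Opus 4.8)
The plan is to reduce the computation for $\Delta$ to the crown graph $\Cr(2k+4)$, whose structure we already control, by exploiting the apex vertex. Label the crown vertices $1,\dots,k+2,1',\dots,(k+2)'$ in the usual way and write $a$ for the apex, so $\Delta$ has $2k+5$ vertices. In $\Delta$ the closed neighborhood of a crown vertex $v$ is its crown neighborhood together with $a$, while $\vec N[a] = \vec 1$; in particular $a,i,j'$ with $i\ne j$ form a triangle, so $\Delta$ has girth $3$. Note that $\Delta$ is \emph{not} neighborly: the pair $\{a,v\}$ turns out to be unsigned for every crown vertex $v$, so the machinery of \cref{prop:nbrly-is-almost-ra} does not apply directly, and I would instead extract the apex coordinate by hand.

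The first key step is to show $\bbe_a \in \Rg$. The vertices $i$ and $i'$ are non-adjacent, and their only common neighbor is $a$: the crown-neighbors of $i$ are all primed while those of $i'$ are all unprimed, so they share no crown neighbor, and neither of $i,i'$ lies in the other's closed neighborhood. Hence $\vec N[i]\cap\vec N[i'] = \bbe_a$, which is a row of $C_\Delta$, giving $\bbe_a\in\Rg$ (here $k\ge 2$ guarantees the existence of such a pair and the applicability of \cref{cor:crown-graph}).

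Next I would strip the apex coordinate from every generator. Because $\bbe_a\in\Rg$, subtracting the appropriate multiple of $\bbe_a$ from each row of $C_\Delta$ produces a vector supported on the crown coordinates, and one checks these apex-removed rows \emph{exactly}: each $\vec N[v]$ becomes the crown neighborhood $\vec N_{\Cr}[v]$; each crown--crown intersection $\vec N[u]\cap\vec N[v]$ becomes $\vec N_{\Cr}[u]\cap\vec N_{\Cr}[v]$ (the apex was the only extra common neighbor, and it has been removed); the pair $i,i'$ contributes $\vec 0$; and $\vec N[a]=\vec 1$ becomes the crown all-ones vector, which already lies in $\Z^{C_{\Cr(2k+4)}}$ since the cokernel of the crown RA matrix is detected by the difference of the two part-sums of the bipartition modulo $k$ (the right-kernel vector of \cref{thm:nbrly-mu}), and this difference vanishes on the balanced all-ones vector. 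Consequently
\[ \Rg \;=\; \Z^{C_{\Cr(2k+4)}} \;\oplus\; \Z\bbe_a, \]
with the crown RA lattice sitting in the crown coordinates and a full copy of $\Z$ in the apex coordinate.

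Finally I would transfer the crown's $1/k$-RA property (\cref{cor:crown-graph}) to $\Delta$. The clean way is to note that for \emph{any} subset $S$ of coordinates the projection index $[\Z^S:\pi_S(\Rg)]$ equals the crown projection index $[\Z^{S\setminus\{a\}}:\pi_{S\setminus\{a\}}(\Z^{C_{\Cr(2k+4)}})]$, since the apex direction is a unimodular direct summand and contributes nothing; as $\Cr(2k+4)$ is $1/k$-RA all these crown indices lie in $\{1,k\}$, so the same holds for $\Delta$, while $[\Z^{2k+5}:\Rg]=k$. This says precisely that the Hermite diagonal of $C_\Delta$ is $[1^{2k+4},k]$ for every arrangement of the columns, i.e. $\Delta$ is $1/k$-RA. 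I expect this last step to be the main obstacle, because the $1/k$-RA property is a statement about the Hermite form under \emph{every} column permutation, and—as the paper emphasizes with the example $\mat2102$—merely having a single nontrivial elementary divisor is not enough to guarantee it. The careful point is that $\bbe_a\in\Rg$ makes the apex column genuinely unimodular no matter where it is placed, so it can never create a new nontrivial pivot, which is exactly what allows the crown's all-orderings guarantee to carry over verbatim to $\Delta$.
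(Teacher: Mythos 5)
Your proof is correct and takes essentially the same approach as the paper's: both rest on the observation that $\vec N[i]\cap\vec N[i'] = \bbe_a$ is a row of $C_\Delta$, use it to split off the apex coordinate and identify $\Z^{C_\Delta}$ with $\Z^{C_{\Cr(2k+4)}} \oplus \Z\bbe_a$, and then invoke \cref{cor:crown-graph}. The only differences are minor: the paper absorbs the all-ones row via the explicit identity $\vec N[1]+\vec N[1'] = \vec 1$ on the crown coordinates instead of your cokernel-functional argument, and it finishes by computing the Smith normal form of the resulting block matrix, whereas your explicit check of projection indices under every column arrangement is, if anything, more faithful to the letter of \cref{defn:almostRA}.
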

\begin{proof}
    Adding the apex vertex $v$ to the crown graph $\Cr(2k+4)$ as the first vertex changes the RA matrix of $C_{\Cr(2k+4)}$ precisely by adding a row of 1's to the top and a column of 1's to the left. To see this, note that the vertex $v$ itself is connected to all vertices, so the top row, corresponding to $N[v]$, is all 1; now for every vertex $u$ the row corresponding to $N[u]$ has a 1 in the leftmost column since $u$ is adjacent to $v$, and furthermore $v \in N[u_1] \cap N[u_2]$ for any two vertices $u_1, u_2$. No more rows are added to $C_\G$ because the intersections $N[u] \cap N[v]$ are just $N[u]$.

    Label the vertices of $\Cr(2k+4)$ as $\{1, 2, \dots, (k+2), 1', 2', \dots, (k+2)'\}$ so the vertex labelled $i$ is adjacent to all $j'$, and $i'$ is adjacent to all $j$, such that $j \ne i$. To find the Smith normal form of $C_\Delta$, we first note that the rows corresponding to $N[1]$ and $N[1']$ only intersect in the first column. Using this row of $C_{\Delta}$, we can thus zero out the entire rest of the first column. Now these rows add up to a row of all-1's (with a 0 in the first column), so subtracting these two rows from the top row makes that row zero. Now, we can work with the submatrix of $C_\Delta$ with the first row and column removed, find its Smith normal form, and just note that we get an extra elementary divisor of 1 from the left out row and column, leaving us with just one nontrivial elementary divisor equal to $k$ by \cref{cor:crown-graph}.
\end{proof}

Using SageMath and data from \cite{mckay_graphs_data_2025}, we found that the pyramid over $\Cr(8) = Q_3$ is one of three 1/2-RA graphs of girth 3 with 9 vertices, and no 1/2-RA girth 3 graphs have fewer vertices. The other two graphs have graph6 strings \verb|H?zTb_{| and \verb|HCOfFz~|. It is unclear whether the pyramid over the crown graph is always one of the minimal girth 3 $(1/k)$-RA graphs; we leave this question as an open problem - see \cref{prob:min-girth3-1/k}.

Unlike with girth 4, we can certainly find graphs $\G$ of girth 3 where $C_\G$ has multiple nontrivial elementary divisors. For example, if $\G$ is the Kneser graph $\Kn(6,2)$ on 15 vertices, then $C_\G$ has elementary divisors $[1^{11}, 2^4]$. In fact, we now show that Kneser graphs $\Kn(kp,p)$ have an unbounded number of nontrivial elementary divisors as the prime $p \to \infty$. We start by exhibiting a family of vectors in the kernel of $C_\G$ for $\G = \Kn(kp, p)$.

\begin{theorem}\label{thm:kneser-very-not-RA}
Suppose $p$ is a prime number and $k \geq 3$. Let $\G = \Kn(kp,p)$. For each vertex $x \in \G$, define the ${kp\choose p}$-dimensional vector $\bx = \sum_{v \in \G} |x \cap v| \bbe_v$ (recalling that $x$ and $v$ are $p$-subsets of $\{1, \ldots, kp\}$). Then $\bx$ lies in the kernel of $C_\G$ over $\Z/p\Z$.
\end{theorem}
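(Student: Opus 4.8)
The goal is to verify that $C_\G \bx \equiv \vec 0 \pmod p$, i.e. that every row of $C_\G$ has dot product $0$ with $\bx$ modulo $p$. The rows come in two kinds: the closed neighborhoods $\vec N[w]$, and the intersections $\vec N[u] \cap \vec N[v]$ for $u \ne v$ (the case $u = v$ reduces to the first kind). The plan is to handle both kinds with a single counting device. For a subset $T \subseteq \{1, \dots, kp\}$, set
\[ g(T) = \sum_{z \cap T = \emptyset} |x \cap z|, \]
where $z$ ranges over all $p$-subsets disjoint from $T$. Swapping the order of summation and counting pairs $(a,z)$ with $a \in x \cap z$ and $z$ disjoint from $T$ shows
\[ g(T) = |x \setminus T| \binom{kp - |T| - 1}{p-1}, \]
since each $a \in x \setminus T$ can be completed to such a $z$ in $\binom{kp-|T|-1}{p-1}$ ways. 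Everything then reduces to evaluating these binomial coefficients modulo $p$, which I would do with Lucas' theorem.

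For a closed-neighborhood row, $N[w] = \{w\} \sqcup \{z : z \cap w = \emptyset\}$, so $\vec N[w] \cdot \bx = |x \cap w| + g(w)$ with $|w| = p$. Since $\binom{(k-1)p-1}{p-1} \equiv 1 \pmod p$ (the base-$p$ units digit of $(k-1)p - 1$ is $p-1$, and all higher digits of $p-1$ are $0$), I get $g(w) \equiv |x \setminus w| = p - |x \cap w| \equiv -|x \cap w| \pmod p$, and the two terms cancel.

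For an intersection row with $u \ne v$, write $s = |u \cap v|$ and $W = u \cup v$, so $|W| = 2p - s$. If $s \ge 1$ (non-adjacent $u,v$), then $N[u] \cap N[v] = \{z : z \cap W = \emptyset\}$, and the row dots to $g(W) = |x \setminus W|\binom{(k-2)p + s - 1}{p-1}$; here the units digit $s-1$ lies strictly below $p-1$, so Lucas makes the binomial $\equiv 0 \pmod p$. If instead $s = 0$ (adjacent $u,v$), then $N[u] \cap N[v] = \{u,v\} \sqcup \{z : z \cap W = \emptyset\}$, so the row dots to $|x \cap u| + |x \cap v| + g(W)$; now $\binom{(k-2)p - 1}{p-1} \equiv 1 \pmod p$, whence $g(W) \equiv p - |x \cap u| - |x \cap v| \equiv -|x \cap u| - |x \cap v| \pmod p$, and again everything cancels.

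The only real content is the two modular binomial evaluations and the clean split into the adjacent ($s = 0$) versus non-adjacent ($s \ge 1$) cases. The main subtlety — and the reason the hypothesis $k \ge 3$ appears — is that $\binom{(k-2)p + s - 1}{p-1}$ switches from $\equiv 0$ (when $s \ge 1$) to $\equiv 1$ (when $s = 0$, which also forces $(k-2)p - 1 \ge 0$, i.e. $k \ge 3$, so that sets disjoint from $W$ exist); it is precisely the extra contributions of $u$ and $v$ themselves in the adjacent case that cancel this nonzero value. I do not expect any deeper obstacle beyond getting the Lucas bookkeeping right; the rest is the routine double count recorded in $g(T)$.
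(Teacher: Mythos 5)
Your proposal is correct and follows essentially the same route as the paper: both arguments evaluate each row sum $\sum_{v \in S} |x \cap v|$ by counting, split into the same three cases (closed neighborhood, adjacent pair with $s=0$, non-adjacent pair with $1 \le s \le p-1$), and your per-element count $\binom{kp-|T|-1}{p-1}$ is exactly the paper's quantity $\binom{kp-|T|}{p}\cdot\frac{p}{kp-|T|}$, with the same values $\equiv 1, 1, 0 \pmod p$ and the same cancellation against the contributions of $u$ and $v$ themselves. The only differences are cosmetic: you package the count as a double-counting lemma for $g(T)$ and invoke Lucas' theorem, where the paper uses a symmetry argument and reduces the factorial ratio directly mod $p$.
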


\begin{proof}
Without loss of generality, assume $x = \{1,2,\ldots,p\}$. What we need to show is that for each row $\by$ of $C_\G$, $\by \cdot \bx = 0$ (in $\Z/p\Z$). Now, each $\by$ is a $(0,1)$-vector, so it corresponds to a subset $S$ of the vertices. Thus, $\by \cdot \bx$ is simply $\sum_{v \in S} |x \cap v|$, which is to say that it counts the total number of occurrences of numbers $1$ through $p$ among the vertices of $S$. Thus, our goal is to show that for each row of $C_\G$, the total number of occurrences of numbers $1$ through $p$ is a multiple of $p$.

The first type of row corresponds to the closed neighborhood of some vertex $u$. Suppose that $|x \cap u| = r$; without loss of generality we will assume that $u \cap \{1, 2, \ldots, p\} = \{1, 2, \ldots, r\}$. Now, $u$ has ${kp-p \choose p}$ neighbors, and the numbers in $\{1, \ldots, kp\} \setminus u$ each occur the same number of times among the neighbors. Since each vertex is a set of size $p$, and there are $kp-p$ numbers that can occur, we see that the number of times each number appears is
\[{kp - p \choose p} \cdot \frac{p}{kp-p} = \frac{(kp-p-1)(kp-p-2)\cdots (kp-2p+1)}{(p-1)(p-2) \cdots 1} \equiv 1 \pmod p.\]
Since there are $p-r$ numbers $r+1, r+2, \ldots, p$, it follows that the total number of occurrences of these numbers is congruent to $p-r$ (mod $p$). Finally, $u$ itself contains $1, 2, \ldots, r$, contributing another $r$ to the total, so that the total number of occurrences is congruent to $0$ (mod $p$).

The second type of row of $C_\G$ describes the intersection of the neighborhoods of adjacent vertices $u$ and $v$. Without loss of generality, let us assume that $u \cap \{1, \ldots, p\} = \{1, \ldots, j\}$ and $v \cap \{1, \ldots, p\} = \{j+1, \ldots, j+r\}$. Similar to before, $u$ and $v$ have ${kp-2p \choose p}$ common neighbors, with each of the $kp-2p$ possible elements occurring the same number of times. Thus, the number of times each number appears is
\[{kp - 2p \choose p} \cdot \frac{p}{kp-2p} = \frac{(kp-2p-1)(kp-2p-2) \cdots (kp-3p+1)}{(p-1)(p-2) \cdots 1} \equiv 1 \pmod p.\]
Thus, the total number of occurrences of the numbers $j+r+1, \ldots, p$ is congruent to $p-(j+r)$ (mod $p$), and then including the fact that $u$ and $v$ themselves contribute $j+r$ to the total, we again see that the total is divisible by $p$.

Finally, the third type of row describes the intersection of neighborhoods of vertices $u$ and $v$ that are distance 2 apart. Let us assume that $|u \cap v| = m$. Then the number of common neighbors of $u$ and $v$ is ${kp-2p+m \choose p}$, and the number of times each number appears among these vertices is
\[{kp-2p+m \choose p} \cdot \frac{p}{kp - 2p + m} = \frac{(kp-2p+m-1)(kp-2p+m-2)\cdots (kp-3p+m+1)}{(p-1)!} \equiv 0 \pmod p, \]
since $1 \le m \le p-1$ guarantees exactly one factor in the numerator is divisible by $p$ (whereas nothing in the denominator is since $p$ is prime). Thus 
the total number of occurrences of the numbers between $1$ and $p$ is divisible by $p$.
\end{proof}

Now we know some of the contents of the kernel of $C_\G$ for $\G = \Kn(kp, p)$. In the following lemma, we generalize slightly to $\Kn(n, p)$ for any $n > p$ and show that it is not hard to find the number of dimensions that the vectors $\bx$ described in \cref{thm:kneser-very-not-RA} span in general.

\begin{theorem}\label{thm:span-of-CVs}
    The vectors described in \cref{thm:kneser-very-not-RA} for the Kneser graph $\Kn(n, p)$, with $n > p$, span a vector space $\KnV(n, p)$ of dimension $m$, where $m = n - 2$ if $p \mid n$, and $m = n - 1$ otherwise.
\end{theorem}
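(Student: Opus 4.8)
The plan is to work over the field $\F_p = \Z/p\Z$, as in \cref{thm:kneser-very-not-RA}, and to collapse the problem from the $\binom{n}{p}$-dimensional vertex space down to the $n$-dimensional ``ground-set'' space $\F_p^n$. The key observation is the decomposition of $\bx$ coming from $|x \cap v| = \sum_{i \in x}[i \in v]$. Precisely, for each element $i \in \{1,\dots,n\}$ let $\bc_i = \sum_{v \ni i} \bbe_v$ be the indicator of those vertices (i.e.\ $p$-subsets) containing $i$. Comparing $v$-coordinates gives $\bx = \sum_{i \in x}\bc_i$. Thus, writing $\mathbf 1_S \in \F_p^n$ for the indicator vector of a subset $S \subseteq \{1,\dots,n\}$ and letting $\phi\colon \F_p^n \to \F_p^{\binom{n}{p}}$ be the linear map with $\phi(\mathbf 1_{\{i\}}) = \bc_i$, we have $\bx = \phi(\mathbf 1_x)$. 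Consequently $\KnV(n,p) = \phi(V)$, where $V = \operatorname{span}_{\F_p}\{\mathbf 1_x : x \in \G\}$, and the whole computation reduces to understanding $\phi$ and $V$ inside $\F_p^n$.

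First I would compute $\ker\phi$. The condition $\sum_i a_i \bc_i = 0$ says $\sum_{i \in v} a_i = 0$ for every $p$-subset $v$. Since $n > p$, any two indices $i \ne j$ can be related by a single-element swap between two $p$-subsets, forcing $a_i = a_j$; so $a$ is a constant vector, and conversely any constant $c$ works since $\sum_{i \in v} c = pc = 0$ in $\F_p$. Hence $\ker\phi = \langle \mathbf 1 \rangle$ is one-dimensional, where $\mathbf 1 = \mathbf 1_{\{1,\dots,n\}}$. Next I would identify $V$. Every $\mathbf 1_x$ has coordinate sum $p \equiv 0 \pmod p$, so $V$ lies in the hyperplane $H = \{a \in \F_p^n : \sum_i a_i = 0\}$; and the differences $\mathbf 1_x - \mathbf 1_{x'} = \mathbf 1_{\{i\}} - \mathbf 1_{\{j\}}$ arising from single-element swaps (again available because $n > p$) range over all $\mathbf 1_{\{i\}} - \mathbf 1_{\{j\}}$, which span $H$. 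Therefore $V = H$ and $\dim V = n-1$.

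Finally, applying rank--nullity to the surjection $\phi|_V \colon V \to \phi(V)$ gives $\dim \KnV(n,p) = \dim V - \dim(V \cap \ker\phi)$. Since $V = H$ and $\ker\phi = \langle \mathbf 1 \rangle$, the intersection $V \cap \ker\phi$ is nonzero exactly when $\mathbf 1 \in H$, i.e.\ when $\sum_i 1 = n \equiv 0 \pmod p$, i.e.\ when $p \mid n$. Thus if $p \mid n$ then $\dim(V \cap \ker\phi) = 1$ and $\dim \KnV(n,p) = (n-1) - 1 = n-2$, while if $p \nmid n$ the intersection is trivial and $\dim \KnV(n,p) = n-1$, as claimed.

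The conceptual heart of the argument---and the step I expect to be the main obstacle to spot---is the decomposition $\bx = \sum_{i \in x}\bc_i$, which trades the large vertex space for the small ground-set space and makes $\KnV(n,p)$ literally the image of the $p$-set subspace under $\phi$. After that, the only point requiring genuine care is checking that single-element swaps suffice both to force constancy of vectors in $\ker\phi$ and to generate every $\mathbf 1_{\{i\}} - \mathbf 1_{\{j\}}$ in $V$; both rely on the hypothesis $n > p$ (so that the required $p$-subsets exist), and the entire $p \mid n$ versus $p \nmid n$ dichotomy is nothing more than the condition $\mathbf 1 \in H$.
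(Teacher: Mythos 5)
Your proof is correct and is essentially the paper's own argument: your map $\phi$ is the paper's surjection $q \colon \Tup(n,p) \to \KnV(n,p)$ (just extended from the sum-zero hyperplane to all of $\F_p^n$), your identification $V = H$ restates the paper's observation that the vertex indicator tuples span $\Tup(n,p)$, and your single-element-swap computation of $\ker\phi = \langle \mathbf{1} \rangle$ matches the paper's comparison of the coordinates at $\{1,\dots,p\}$ and $\{1,\dots,p-1,p+1\}$. The only cosmetic difference is that you compute the full kernel of $\phi$ on $\F_p^n$ and then intersect with $V$, whereas the paper restricts to the hyperplane from the start; the $p \mid n$ dichotomy (whether $\mathbf{1}$ lies in the relevant subspace) arises identically in both.
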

\begin{proof}
    We define the vector space $\Tup(n, p)$ of all tuples $(a_1, \dots, a_n)$ where each $a_i \in \Z/p\Z$ and $\sum_{i=1}^n a_i \equiv 0 \pmod p$. This is naturally an $(n-1)$-dimensional $\Z/p\Z$-vector space (since the last coordinate must simply ensure $\sum_{i=1}^n a_i \equiv 0 \pmod p$). 
    
    One obvious spanning set for $\Tup(n, p)$ is $\{\bb_i\}_{i=1}^{n-1}$, where $\bb_i$ has $a_i = 1, a_n = -1$, and $a_j = 0$ for $j \notin \{i, n\}$. Another spanning set is the set of vertices of $\Kn(n, p)$, where the vertex $\{i_1, \dots, i_p\}$ is identified with its characteristic tuple, with $a_{i_1} = \cdots = a_{i_p} = 1$ and $a_j = 0$ for $j \notin \{i_1, \dots, i_p\}$. This is because for $i < n$, we have $\bb_i = u - v$, where $u$ is a vertex containing $i$ and $p-1$ other numbers and $v$ contains those same $p-1$ numbers and $n$ instead of $i$. 
    
    There is a surjective linear map $q: \Tup(n, p) \to \KnV(n, p)$ which takes a tuple $t = (a_1, \dots, a_n)$ to the vector $\bv$ defined by declaring its coordinate corresponding to a vertex $u$ to be given by $\sum_{i \in u} a_i \pmod p$ (the image of the map lands in $\KnV(n, p)$ because the image of each vertex $v \in \Kn(n, p)$ does, and the vertices span $\Tup(n, p)$). 
    This shows that we always have $\dim(\KnV(n, p)) \le n-1$, and furthermore when $p \mid n$, the tuple $\vec 1 = (1, \dots, 1)$ is in the kernel
    , so in this case the dimension is reduced by one and we have $\dim(\KnV(n, p)) \le n-2$.
    
    Finally, let us show that there is no kernel if $p \nmid n$ and that $\langle \vec 1 \rangle$ is the entire kernel when $p \mid n$. Suppose that $t = (a_1, \dots, a_n) \in \ker q$. Considering the entries of $q(t)$ corresponding to $\{1, \dots, p\}$ and $\{1, \dots, p-1, p+1\}$ shows that $a_p \equiv a_{p+1} \pmod p$. Similarly we see that $a_i \equiv a_j \pmod p$ for all $i \ne j$, so only a multiple of $\vec 1$ can be in the kernel, so $\vec 1 \in \Tup(n, p)$ precisely when $p \mid n$.
\end{proof}

\begin{remark}
    One can show that a basis for $\KnV(n, p)$ is given by $\{\bv_p, \bv_{p+1}, \dots, \bv_{m+1}, \bv_{\widehat{p-1}}, \bv_{\widehat{p-2}}, \dots, \bv_{\widehat2}\}$, the vectors corresponding to vertices $v_i = \{1, \dots, p-1, i\}$, $v_{\widehat{j}} = \{1, \dots, \widehat{j}, \dots, p+1\}$ (the numbers $1, \dots, p+1$ with $j$ missing).
\end{remark} 

The preceding two lemmas show that we can find  girth-3 graphs with an arbitrarily large number of elementary divisors divisible by any fixed prime $p$:

\begin{corollary}
For each prime $p$ and each positive integer $m$, there is a graph $\G$ such that $C_\G$ has at least $m$ elementary divisors that are multiples of $p$.
\end{corollary}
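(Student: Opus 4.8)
The plan is to combine the two preceding theorems with the standard dictionary between the mod-$p$ null space and the elementary divisors of an integer matrix. First I would record the linear-algebra fact that underlies the whole argument: for an integer matrix $C$ with Smith normal form $PCQ = D$ (with $P,Q$ unimodular), reducing modulo $p$ and using that $P$ and $Q$ remain invertible over $\F_p$ shows that $\operatorname{rank}_{\F_p}(C) = \operatorname{rank}_{\F_p}(D)$, which counts exactly the diagonal entries of $D$ coprime to $p$. Consequently the dimension of the right null space of $C$ over $\F_p$ equals the number of elementary divisors of $C$ that are divisible by $p$. This is the same bookkeeping already invoked in the proof of \cref{prop:U-V}, and I would simply cite it.

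Granting this dictionary, the corollary reduces to exhibiting a single graph whose RA matrix has a large mod-$p$ kernel. For that I would take $\G = \Kn(kp, p)$ with $k = \max\{3, \lceil (m+2)/p \rceil\}$, so that $k \geq 3$ and $kp - 2 \geq m$. Then $n = kp$ satisfies $n > p$ and $p \mid n$, so \cref{thm:span-of-CVs} applies in its first case and tells us that the vectors $\bx$ from \cref{thm:kneser-very-not-RA} span a space of dimension exactly $kp - 2 \geq m$. By \cref{thm:kneser-very-not-RA} every such $\bx$ lies in the kernel of $C_\G$ over $\Z/p\Z = \F_p$, so the right null space of $C_\G$ over $\F_p$ has dimension at least $kp - 2 \geq m$.

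Putting the two together, $C_\G$ has at least $m$ elementary divisors divisible by $p$, which is the claim. There is essentially no hard step here; the only thing to be careful about is the direction of the null-space dictionary, namely that the kernel dimension counts the elementary divisors divisible by $p$ (not merely the nonzero ones), and that we use the convention from the footnote of counting $|\G|$ diagonal entries regardless of the rectangular shape of $C_\G$. The real content lives entirely in the two theorems already established, and this corollary just reads off their combined consequence.
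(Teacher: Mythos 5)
Your proposal is correct and takes essentially the same route as the paper, which derives this corollary directly by combining \cref{thm:kneser-very-not-RA} and \cref{thm:span-of-CVs} for $\G = \Kn(kp,p)$ with $k$ large enough. Your only additions are to spell out the Smith-normal-form dictionary (mod-$p$ nullity equals the number of elementary divisors divisible by $p$, counting zeros) and the explicit choice $k = \max\{3, \lceil (m+2)/p \rceil\}$, both of which the paper leaves implicit.
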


Note that we are not describing the entire kernel of $C_\G$ in Theorems \ref{thm:kneser-very-not-RA} and \ref{thm:span-of-CVs}. For example, if $\G = \Kn(8,2)$, then we can compute that the elementary divisors of $C_\G$ are $[1^{21}, 2^7]$, so we have only accounted for 1/2 of the vectors in the kernel. The following table shows the nontrivial elementary divisors of $C_\G$ for some small Kneser graphs $\Kn(n, p)$ for multiples $n$ of $p$.

\begin{table}[hbtp]
    \centering
    \begin{tabular}{|c|c|c|c|c|c|c|c|c|c|c|c|}
       \hline
       $(n, p)$ & (6, 2) & (8, 2) & (10, 2) & (12, 2) & (14, 2) & (16, 2) & (18, 2) & (20, 2) & (9, 3) & (12, 3) & (15, 3) \\
       \hline
       $C_{\Kn(n, p)}$: & $2^4$ & $2^7$ & $2^8$ & $2^{10}, 4^1$ & $2^{12}$ & $2^{15}$ & $2^{16}$ & $2^{18}, 4^1$ & $3^7$ & $3^{10}$ & $3^1, 9^{13}$\\
       \hline
    \end{tabular}
    \caption{The elementary divisors $> 1$ of $C_\G$ for the Kneser graphs $\Kn(kp, p)$}
    \label{tab:KneserElDiv}
\end{table}

Girth 3 graphs can in fact get still much farther from being RA and the $C_\G$ matrix can have arbitrarily many elementary divisors that are 0; that is, its right kernel can have arbitrarily large dimension, as we proceed to show now.     For any integer $n \ge 2$, let $r = \lceil \log_2 n \rceil$ and $M = M(n)$ be the $(n + r) \times (n + r)$ block matrix\footnote{Idea adapted from ChatGPT (OpenAI), conversation with Igor Minevich, August 16, 2025.}
    \[\mat{J_n}B{B^T}{J_r}\]
where $J_i$ is the all-ones $i \times i$ matrix and $B$ is the $n \times r$ matrix whose $k$-th row (consisting of 0's and 1's) is the number $k-1$ written out in binary. Since $M$ is symmetric and has 1's along the main diagonal, $M-I$ is the adjacency matrix for a graph $\Bg(n)$ ($\Bg$ stands for ``binary graph'').

We can also think of $\Bg(n)$ in terms of an incidence graph. Consider a bipartite graph where one part consists of vertices $1, 2, \ldots, r$ and the other part consists of the first $n$ subsets of $\{1, 2, \ldots, r\}$ (in an ordering where $A < B$ if $\max(A) \leq \max(B)$ and $A$ is lexicographically before $B$). Then put an edge between each number and the sets that contain it. Finally, we convert each part of the bipartition into a clique.

\begin{theorem}\label{thm:z(n)}
Let $z(n)$ be the dimension of the right kernel of $C_{\Bg(n)}$. Then, $z(n+1) = z(n) + 1$ if $n$ has at least three 1's in its binary representation, and otherwise $z(n+1) = z(n)$.
\end{theorem}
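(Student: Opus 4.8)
The plan is to coordinatize $\Q^{n+r}$ by $\bx = (x_1,\dots,x_n,y_1,\dots,y_r)$, where $x_k$ is attached to the $k$-th set-vertex (encoding the subset $S_k\subseteq\{1,\dots,r\}$ whose binary code is the integer $k-1$) and $y_j$ to the $j$-th number-vertex. Writing $f(T)=\sum_{k\,:\,T\subseteq S_k}x_k$ for $T\subseteq\{1,\dots,r\}$, I would first pin down the right kernel of $C_{\Bg(n)}$. The activation rows $\vec N[v]$ give, for a set-vertex, $\sum_k x_k+\sum_{j\in S_k}y_j=0$ and, for a number-vertex, $f(\{j\})+\sum_i y_i=0$. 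Because $S_1=\emptyset$ and every singleton appears among $S_1,\dots,S_n$ (as $2^{r-1}<n$ for all $n\ge2$), these force $\sum_k x_k=0$, then $\by=\mathbf 0$, and then $f(\{j\})=0$ for every $j$. I would then check the three kinds of intersection rows $\vec N[u]\cap\vec N[v]$: once $\by=\mathbf 0$, the set--set and set--number rows are automatically satisfied, while the number--number rows contribute exactly the conditions $f(\{i,j\})=0$. Hence $\ker C_{\Bg(n)}$ lives entirely in the set-coordinates and is cut out precisely by $f(T)=0$ for all $|T|\le2$, giving $z(n)=n-\mathrm{rank}(W_n)$, where $W_n$ has a row for each $T$ with $|T|\le2$ and a column $c(S_k)$ with $c(S_k)_T=\mathbf 1[T\subseteq S_k]$.

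With this reduction, $z(n+1)-z(n)=1-\big(\mathrm{rank}(W_{n+1})-\mathrm{rank}(W_n)\big)$, so the theorem becomes a statement about whether adjoining the new column $c(S_{n+1})$ (together with the extra rows from the new bit when $n$ is a power of $2$) raises the rank. The governing quantity is $|S_{n+1}|$, which equals the number of $1$'s in the binary expansion of $n$, so I would split on that weight.

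For weight $\le 2$ I would show the rank increases by exactly one, so $z$ is unchanged. No earlier subset $S_k$ (integer $<n$) can contain $S_{n+1}$, since that would force the integer to be $\ge n$; thus in the coordinate $T=S_{n+1}$ (legitimate because $|S_{n+1}|\le 2$) every old column is $0$ while the new column is $1$, producing a pivot and raising the rank by one. When $n=2^a$ (weight $1$) the parameter $r$ grows by one; the only new row that is not identically zero is $T=S_{n+1}=\{a+1\}$, so the same pivot argument gives rank increase exactly one. Either way $z(n+1)=z(n)$.

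The crux is weight $\ge3$, where I must prove $c(S_{n+1})$ already lies in the column span of $W_n$, so the rank is unchanged and $z(n+1)=z(n)+1$. Put $S=S_{n+1}$, $s=|S|\ge3$; since every proper subset of $S$ is coded by a strictly smaller integer, all columns $c(S')$ with $S'\subsetneq S$ are present. Setting $A_j=\sum_{S'\subseteq S,\,|S'|=j}c(S')$, a direct count gives $(A_j)_T=\binom{s-t}{j-t}$ when $T\subseteq S$ with $t=|T|$, and $0$ otherwise, so it suffices to find scalars $\beta_0,\dots,\beta_{s-1}$ with $\sum_j\beta_j\binom{s-t}{j-t}=1$ for $t=0,1,2$. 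The $3\times s$ coefficient matrix has full row rank $3$ (its columns $j=0,1,2$ form an upper-triangular block with unit diagonal), so the system is solvable and $c(S)=\sum_j\beta_j A_j$ is a combination of columns already in $W_n$. I expect this binomial/rank step to be the main obstacle; everything else is bookkeeping, and $s\ge3$ is exactly what supplies the three free parameters that make the system consistent.
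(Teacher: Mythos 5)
Your proposal is correct, and its first two stages coincide in substance with the paper's proof: the paper also begins by forcing the $r$ number-coordinates of any kernel vector to vanish (using the empty set and the singletons, all of which occur among $S_1,\dots,S_n$ since $2^{r-1}<n$), thereby reducing the kernel to the first $n$ coordinates cut out by $\vec 1$, the columns of $B$, and their pairwise bitwise ANDs --- your $W_n$ is exactly this system with duplicate rows removed --- and for $n$ of binary weight $1$ or $2$ it invokes the same pivot fact you do, namely that no integer smaller than $n$ has binary digits containing all those of $n$, so the relevant row of the enlarged system is $(0,\dots,0,1)$. The genuine divergence is at the crux, weight $\ge 3$. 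The paper stays with rows: using the self-similar (periodic) structure of $B^T$, it shows that every pairwise intersection of rows of $B^T$ having a $1$ in column $n+1$ also has a $1$ in an earlier column, so no new distinct nonzero truncated row appears, and it combines this with a (briefly justified) linear-independence claim for the distinct nonzero rows to conclude via rank--nullity that the rank is unchanged. You instead work with columns: since every proper subset $S'$ of $S=S_{n+1}$ is encoded by an integer smaller than $n$, the columns $c(S')$ all sit inside $W_n$, and you exhibit $c(S)$ as an explicit combination $\sum_{j=0}^{s-1}\beta_j A_j$ by solving $\sum_j \beta_j {s-t \choose j-t}=1$ for $t=0,1,2$, solvable because the block of columns $j=0,1,2$ is triangular with unit diagonal --- which exists exactly when $s\ge 3$. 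Your route is more self-contained at this step: it needs no independence statement about the rows, only the presence of the proper-subset columns, and it makes the dichotomy transparent, since for $s=2$ the $t=2$ equation degenerates to $0=1$, which is precisely the pivot that appears in the other case. What the paper's row picture buys in exchange is an explicit handle on the rank as a count of distinct nonzero rows (in echelon-like position, each leading in the column indexed by the value of its subset), which is what underlies the closed formula for $z(n)$ recorded in \cref{cor:unbounded-zero-el-div}; your identity $z(n)=n-\mathrm{rank}(W_n)$ yields that count only implicitly.
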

\begin{proof}
    First we show that any vector $\bv$ in the kernel must be zero in the last $r$ coordinates. Row 1 has 1's in only the first $n$ places, so the sum of the first $n$ entries of $\bv$ must be 0. But rows $2, 3, 5, 9, \dots, 2^{i-1}+1, \dots, 2^{r-1}+1$ have 1's in the first $n$ entries, a single 1 in the $i$-th place, and 0's elsewhere in the last $r$ entries, for $i = 1, 2, \dots, r$. This forces the $i$-th coordinate of any vector in the kernel to be 0 (for $n+1 \leq i \leq n+r$). 

    Now, a vector $\bv$ in the kernel of $C_{\Bg(n)}$ must simply have the sum of its elements equal to 0 and dot product 0 with every intersection of two (not necessarily distinct) rows of $B^T$ (where intersection is the bitwise AND operation). 

    If a vector $\bv$ is in the kernel of $C_{\Bg(n)}$, then extending it by a 0 on the right gives a vector in the kernel of $C_{\Bg(n+1)}$. This is because the entries in the first $n$ columns of $B^T$ do not change as $n$ increases, as more columns are added for the binary representations of further rows and eventually more rows are added to $B^T$ that have 0's in the first $n$ entries, so if the dot product of the first $n$ entries of $\bv$ with the intersections of rows of $B^T$ was 0 in $M(n)$, it will remain zero for $B^T$ in $M(n+1)$.

    If $n$ has only one 1 in its binary representation, so will the $(n+1)$st column of $B^T$ in $M(n+1)$. The rightmost column of $B^T$ obviously always has a 1 at the top since a nonzero binary number starts with 1, so this 1 is in the top row. In this case, $n$ is a power of 2 and no previous number has been as large as that power of 2, so there are only 0's to the left of this 1. Any vector $\bv$ in the kernel must have a zero dot product with this row, which means its entry in that $(n+1)$st column must be 0 and it must come from adding 0's to the right of vectors that were in the kernel for $C_{\Bg(n)}$.
    
    If $n$ has only two 1's in its binary representation, then one of the 1's is in the top row and by the time the 1's begin to appear in the top row, there are only 0's to the left of the bottom 1, 
    so the two rows corresponding to the two 1's only intersect in the $(n+1)$st column. (To be explicit, this is because $n$ must equal {$2^{r-1} + 2^i$} for some $i, 0 \le i \le r-1$, the only ones in the top ($r$-th) row of $B^T$ are the ones in columns $2^{r-1} + 1$ through $2^{r-1} + 2^i$, and none of the numbers between $2^r$ and $2^r + 2^i-1$ (inclusive) have a 1 in the $i$-th binary digit.) Again, any vector $\bv$ in the kernel must have dot product 0 with this intersection, so its $(n+1)$st entry must be 0 and it comes from the kernel for $C_{\Bg(n)}$. This proves that if $n$ has less than three 1's in its binary representation, then $z(n+1) = z(n)$.

    There is another way to see this, relying on the same reasoning. We will restrict our attention only to the first $n$ coordinates of rows in $C_{\Bg(n)}$ for all $n$, as these are really responsible for the right kernel of $C_{\Bg(n)}$; we will denote by $C'_{\Bg(n)}$ the first $n$ columns of $C_{\Bg(n)}$. As it turns out, the number of distinct nonzero rows in $C'_{\Bg(n)}$ increases by 1 if $n$ has one or two 1's in its binary representation and it does not increase if it has at least three; what's more, these are all linearly independent. If $n$ has only one 1 in its binary representation, then $C'_{\Bg(n+1)}$ gains a new row in $B^T$ which is just 1 in the $(n+1)$st column and zeroes elsewhere; this has trivial intersection with all other rows in $C'_{\Bg(n+1)}$ so only one vector is added, and is indeed linearly independent to those that came before. If $n$ has two 1's in its binary representation, then rows in $C'_{\Bg(n)}$ are extended to longer rows in $C'_{\Bg(n+1)}$, and if they were already linearly independent, they remain so. Now by the reasoning above, $C_{\Bg(n+1)}$ has a new row with a 1 in the $(n+1)$st column and zeroes elsewhere; again, this is clearly independent of any vector whose nonzero entries were confined to the first $n$ columns. These are linearly independent, so by the rank-nullity theorem we have $z(n+1) = z(n)$ if $n$ has one or two 1's in its binary representation.

    If $n$ has at least three 1's in its binary representation, then there are 1's to the left of all of these 1's in the $(n+1)$st column, and furthermore we shall show that there is no intersection of these rows that yields a 1 only in the $(n+1)$st column. Suppose the 1's in the $(n+1)$st column occur in rows $i_1, i_2, \dots, i_m, r$ of $B^T$ (in order from the bottom up). Any two rows $i_a$ and $i_b$ with $i_a < i_b < r$ intersect at least once again in the $(n+1 - 2^{r-1})$-th column. Indeed, the pattern of the first $2^{r-1}$ columns and first $r-1$ rows repeats starting at the $(2^{r-1}+1)$st column. For the same reason, for each $a < m$, the top row intersects the $i_a$-th row at least once again in the $(n+1 - 2^{i_m})$-th column.
    Finally, the top row intersects with row $i_m$ in the $n$-th column since the existence of a third 1 in the $(n+1)$st column guarantees $n$ is not the first number to have a 1 in the $i_m$-th digit after the numbers have started having a 1 in the $r$-th digit. 
    
    Thus, no row of $C_{\Bg(n+1)}$ has a single 1 in the $(n+1)$st column and zeroes elsewhere. Every intersection of two rows of $B^T$ in $M(n+1)$ thus already has nonzero entries in the first $n$ columns so comes from a row in $C_{\Bg(n)}$ and adds no new rows to $C_{\Bg(n+1)}$. Since $n$ has increased by exactly 1 and the kernel depends only on $n$, not $r$, the rank-nullity theorem says that the dimension of the right kernel of $C_{\Bg(n+1)}$ is one more than that of $C_{\Bg(n)}$.    
\end{proof}

Since infinitely many numbers have at least three 1's in their binary representation, we have the following theorem.

\begin{corollary}
 \label{cor:unbounded-zero-el-div}
    As $n \to \infty$, the number of zero elementary divisors $z(n)$ of $C_{\Bg(n)}$ increases without bound, and the function $n \mapsto z(n)$ is surjective onto the set of nonnegative integers. In particular, the dimension of the kernel is
    \[ z(n) = n - 1 - r - {r-1 \choose 2} - \lceil \log_2(n-2^{r-1}) \rceil.\]
\end{corollary}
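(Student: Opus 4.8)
The plan is to read off from \cref{thm:z(n)} that $z$ is governed entirely by a digit-count, reduce $z(n)$ to a clean combinatorial quantity, and then evaluate that quantity in closed form. Write $s(m)$ for the number of $1$'s in the binary representation of $m$. \cref{thm:z(n)} says precisely that $z(m+1) - z(m) = 1$ when $s(m) \ge 3$ and $z(m+1) - z(m) = 0$ otherwise. First I would pin down the base case: for $n = 2$ we have $r = 1$ and $\Bg(2)$ is the path $P_3$ (its only edges are $a_1 - a_2$ and $a_2 - c_1$), which is a tree and hence RA, so $C_{\Bg(2)}$ has full rank and $z(2) = 0$. Telescoping the recurrence from $m = 2$ up to $m = n-1$, and noting that $s(0), s(1) < 3$ contribute nothing, gives the reformulation
\[ z(n) = \#\{\, 1 \le m < n : s(m) \ge 3 \,\}. \]

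With this reformulation the two qualitative claims are immediate. The sequence is non-decreasing and each step increases it by at most $1$. Since there are infinitely many $m$ with $s(m) \ge 3$ (for instance $m = 7 \cdot 2^k$), infinitely many $+1$ increments occur, so $z(n) \to \infty$. A non-decreasing integer sequence starting at $z(2) = 0$, unbounded, and never jumping by more than $1$ must hit every value in between, giving surjectivity onto $\Z_{\ge 0}$.

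For the explicit formula I would compute the complementary count: among the $n-1$ integers $m$ with $1 \le m < n$, subtract those with $s(m) = 1$ and those with $s(m) = 2$. Using $r = \lceil \log_2 n \rceil$, so that $2^{r-1} < n \le 2^r$, the integers $< n$ with a single $1$-bit are exactly $2^0, 2^1, \ldots, 2^{r-1}$, numbering $r$. The integers $< n$ with exactly two $1$-bits split by their top bit: those whose top bit lies below position $r-1$ are automatically below $2^{r-1} < n$, contributing ${r-1 \choose 2}$; those of the form $2^{r-1} + 2^b$ with $0 \le b \le r-2$ lie below $n$ iff $2^b < n - 2^{r-1}$. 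Subtracting all of these from $n-1$ yields
\[ z(n) = (n-1) - r - {r-1 \choose 2} - \lceil \log_2(n - 2^{r-1}) \rceil, \]
as claimed.

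The only genuinely delicate point — and the step I expect to require the most care — is the final count, namely verifying that $\#\{\, b \ge 0 : 2^b < n' \,\} = \lceil \log_2 n' \rceil$ for $n' = n - 2^{r-1} \in (0, 2^{r-1}]$, and checking the edge cases where $n'$ is itself a power of $2$ so that the ceiling behaves correctly; one also confirms that this count never exceeds the admissible range $0 \le b \le r-2$, which holds because $n' \le 2^{r-1}$. Everything else is bookkeeping once $z(n)$ has been recast as a digit-sum count, since the substantive inductive work was already carried out in \cref{thm:z(n)}.
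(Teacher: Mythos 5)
Your proof is correct and follows essentially the same route as the paper's: telescoping \cref{thm:z(n)} to recast $z(n)$ as the count of integers below $n$ with at least three binary $1$'s, then complementary counting with the identical decomposition ($r$ single-bit numbers, $\binom{r-1}{2}$ two-bit numbers avoiding the top bit, and $\lceil \log_2(n-2^{r-1}) \rceil$ two-bit numbers using it). Your explicit verification of the base case $z(2)=0$ via $\Bg(2)=P_3$ and your care with the ceiling-function edge cases are small additions the paper leaves implicit, but they do not change the argument.
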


\begin{proof}
Of course, $\lim_{n \to \infty} z(n) = \infty$ since there are infinitely many numbers with at least 3 1's in binary. The function $n \mapsto z(n)$ is surjective because $z(2) = 0$ and as $n$ goes up by 1, $z(n)$ goes up by at most 1. The value of $z(n)$ is simply how many positive integers less than $n$ have at least three 1's in their binary representation. Starting with the $n-1$ numbers $1$ through $n-1$, we have exactly $r$ of them that have a single 1 in their binary expansion: $1, 2, 4, \ldots, 2^{r-1}$. There are ${r \choose 2}$ numbers less than $2^r$ with exactly two 1's in their binary expansion, but some of these may not be less than $n$. Certainly, since $n > 2^{r-1}$, every way of having two 1's that does not use the leftmost bit will be less than $n$, so that gives us ${r-1 \choose 2}$ numbers. For the remainder, we simply subtract off the leftmost bit from $n$ by computing $n - 2^{r-1}$, and then $\lceil \log_2(n - 2^{r-1}) \rceil$ gives us how many numbers less than $n - 2^{r-1}$ have a single 1 in their binary expansion.
\end{proof}

We note that the smallest graphs with a zero elementary divisor of $C_\G$ have 10 vertices: they were found by exhaustive search using SageMath and the data in \cite{mckay_graphs_data_2025}. They have graph6 strings \verb|I?otQji\O| and \verb|ICQrThix_|, and both have $C_\G$ with elementary divisors $1^9, 0$. The smallest $n$ such that $C_{\Bg(n)}$ has nontrivial kernel is $n = 8$, giving a graph with 11 vertices.

We have shown in \cref{cor:crown-graph} that $C_\G$ can have any single elementary divisor we wish, and in \cref{cor:unbounded-zero-el-div} that we can make $C_\G$ have any nullity. Let us now use the crown graphs and binary graphs as building blocks to show that we can make $C_\G$ as ``bad'' as we want.

\begin{theorem}\label{thm:allyourbase}
    Given any list of numbers $d_1 \mid d_2 \mid \cdots \mid d_n$ with $d_1 \ge 2$, and any number $r \ge 0$, there is a graph $\G$ whose RA matrix $C_\G$ has nullity $r$ and nontrivial, nonzero elementary divisors precisely $d_1, \dots, d_n$.
\end{theorem}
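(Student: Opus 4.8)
The plan is to build $\G$ as a disjoint union of building blocks, each contributing exactly one prescribed summand to the cokernel $\Z^{|\G|}/\Rg$. The key structural observation is that the RA matrix of a disjoint union is essentially block diagonal: if $\G = \G_1 \sqcup \G_2$ with vertex sets $V_1 \sqcup V_2$, then every closed neighborhood $\vec N[v]$ and every intersection $\vec N[u] \cap \vec N[v]$ with $u,v$ in the same component is supported on that component, while the cross-intersections (with $u \in V_1$ and $v \in V_2$) are empty and contribute only zero rows. Hence the integer row space splits as $\Z^{C_{\G_1}} \oplus \Z^{C_{\G_2}}$ inside $\Z^{V_1} \oplus \Z^{V_2}$, and so $\Z^{|\G|}/\Rg \cong (\Z^{|V_1|}/\Z^{C_{\G_1}}) \oplus (\Z^{|V_2|}/\Z^{C_{\G_2}})$. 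Iterating, the cokernel of $C_\G$ for any disjoint union is the direct sum of the cokernels of its components.

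For the torsion I would take one crown graph $\Cr(2d_i+4)$ for each $i$. Since $d_i \ge d_1 \ge 2$, \cref{cor:crown-graph} applies and shows $\Cr(2d_i+4)$ is $1/d_i$-RA; in particular its RA matrix has full rank and a single nontrivial elementary divisor $d_i$, so its cokernel is $\Z/d_i\Z$. For the free (nullity) part I would adjoin $r$ disjoint copies of a graph $H$ whose RA matrix has elementary divisors $[1^9,0]$ --- for instance one of the two $10$-vertex graphs noted after \cref{cor:unbounded-zero-el-div} --- so that each copy has cokernel $\Z$ and contributes exactly one zero elementary divisor. (Thematically one could instead use the binary graphs $\Bg(n)$, which realize every nullity by \cref{cor:unbounded-zero-el-div}, provided one checks they contribute no extra torsion; using $H$ sidesteps this.) Setting
\[ \G = \Bigl(\bigsqcup_{i=1}^n \Cr(2d_i+4)\Bigr) \sqcup \underbrace{H \sqcup \cdots \sqcup H}_{r}, \]
the block-diagonal principle gives $\Z^{|\G|}/\Rg \cong \bigl(\bigoplus_{i=1}^n \Z/d_i\Z\bigr) \oplus \Z^{\,r}$.

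It remains to read off the elementary divisors from this cokernel, and here the hypothesis $d_1 \mid d_2 \mid \cdots \mid d_n$ does all the work: a direct sum $\bigoplus_{i=1}^n \Z/d_i\Z$ of cyclic groups whose orders already form a divisibility chain is precisely the invariant-factor decomposition of the underlying group, so its nontrivial invariant factors are exactly $d_1, \dots, d_n$. The free rank $r$ accounts for exactly $r$ zero entries in the Smith normal form, i.e.\ nullity $r$, and every remaining diagonal entry is $1$. Thus $C_\G$ has nullity $r$ and nontrivial nonzero elementary divisors precisely $d_1, \dots, d_n$, as required. The main point to verify carefully is the block-diagonal reduction of $\Rg$ under disjoint union (together with the bookkeeping that the empty cross-rows leave the row space unchanged); once that structural step is in place, the divisibility hypothesis makes the identification of invariant factors immediate, so I expect no serious obstacle beyond it.
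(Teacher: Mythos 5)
Your matrix algebra is sound: for a disjoint union the cross-component intersections $\vec N[u] \cap \vec N[v]$ are empty, so $C_\G$ is block diagonal up to zero rows, the cokernel is the direct sum of the components' cokernels, and since $d_1 \mid d_2 \mid \cdots \mid d_n$ the invariant factors of $\bigl(\bigoplus_i \Z/d_i\Z\bigr) \oplus \Z^r$ are read off exactly as you say. You also correctly flag, and sidestep, the question of whether the binary graphs $\Bg(N)$ contribute extra torsion, by using $r$ copies of a $10$-vertex graph with divisors $[1^9,0]$. Indeed the building blocks are the same ones the paper uses.

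The genuine gap is that your $\G$ is disconnected, and under the paper's standing convention it is therefore not an admissible example. Section 2 states explicitly that since $G^\G \cong G^{\G_1} \times \cdots \times G^{\G_k}$ over connected components, ``we will always assume that the graphs we start with are connected,'' and the whole point of \cref{thm:allyourbase} (the climax of a section about \emph{girth 3} graphs) is that a \emph{single connected} graph can realize any prescribed divisors; for disjoint unions the claim is essentially contentless, as it just says that direct sums of known examples exist. The paper's proof spends all of its effort on precisely the step you skip: it forms the pyramid $K_1 + \bigl(\bigsqcup_i \Cr(2d_i+4) \sqcup \Bg(N)\bigr)$, joining an apex vertex to your disjoint union to make it connected, and then shows that the resulting all-ones row and column can be cleared by row operations without disturbing the divisors. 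This uses special structure of the blocks: in each crown graph there are two rows ($N[1]$ and $N[1']$) that intersect only at the apex and sum to the all-ones row, and in the binary graph there are two rows $x$ and $y$ whose intersection likewise cancels the top row --- the same mechanism as in \cref{prop:pyramid}. To repair your argument you would need to add such a connectivity step (and verify it preserves the elementary divisors); as written, the construction does not produce a graph of the kind the theorem is about.
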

\begin{proof}
    Let $\G_i = \Cr(2d_i + 4)$ be the crown graph on $2d_i + 4$ vertices, and (if $r > 0)$ let $N$ be an integer such that $z(N) = r$ (from \cref{cor:unbounded-zero-el-div}) and $\Bg = \Bg(N)$. Our graph $\G$ is $K_1 + \bigsqcup \G_i \sqcup \Bg$, the pyramid over the disjoint union of all of these (if $r = 0$, we do not include $\Bg$).
    Let $A_i$ be the activation matrix of $\Cr(2d_i + 4)$ and $A$ the activation matrix for $\Bg$. Then, ordering the vertices with the pyramid's apex first, the activation matrix $A_\G$ of $\G$ takes the form
    \[\begin{pmatrix}
        1 & 1 \cdots 1 & 1 \cdots 1 & \cdots & 1 \cdots 1\\
        \vec 1 & A_1 & 0 & \cdots & 0\\
        \vec 1 & 0 & A_2 & \cdots & 0\\
        \vdots & \vdots & \vdots & \ddots & \vdots\\
        \vec 1 & 0 & 0 & \cdots & A\\        
    \end{pmatrix}
    \]
    The intersections of rows are either (a) the intersection of the first row with a row that comes from some $A_i$ or $A$, which is just the same row as in $A_\G$, (b) the intersection of two rows in the same block, which form a row of $C_{\G_i}$ or $C_{\Bg}$ and a 1 in the leftmost column, or (c) the intersection of two rows from two different blocks, which just form a 1 in the leftmost column.

    Note that, as in the proof of \cref{prop:pyramid}, for every $i$ there are two rows of $A_i$ that have no intersection and add up to the all-ones row of size $2d_i + 4$. The intersection of these two rows in $A_1$ gives just a 1 in the leftmost column and 0's elsewhere, so we can use row operations to zero out the entire left column (excluding one row equal to $\vec e_1^T$) and then the top row above each $A_i$. In the activation matrix $A$ for $\Bg(N)$, we have two particular rows, say row $x$ which looks like $1 \cdots 1 1 0 \cdots 0$ with $N+1$ repeated 1's and row $y$ which looks like $1 \cdots 1 0 1 \cdots 1$, with $N$ 1's followed by a 0 and the rest 1's. Their intersection is just a row with $N$ 1's followed by 0's, so subtracting rows $x$ and $y$ from the first row and then adding their intersection cancels out the first row completely. Now we are just left with a single 1 in the first column and blocks of the form $C_{\G_i}$ and $C_{\Bg}$, so using the column and row operations for each of these gives us diagonals with a single nonzero elementary divisor $d_i$ for each $i = 1, \dots, n$ and $r$ zeros in the diagonal corresponding to $\Bg$. Since each $d_i \mid d_{i+1}$ already by assumption, these are precisely the nonzero elementary divisors of $C_{\G}$.
\end{proof}

Using similar techniques, one can also show that if we build a pyramid over each $\Cr(2d_i + 4)$ and $\Bg(N)$, and then take the disjoint union of all these, adding a single vertex connected to just the apices of these pyramids, this too gives a graph $\G$ with the same elementary divisors of $C_\G$. In fact, so does simply the join $\Cr(2d_1+4) + \Cr(2d_2 + 4) + \cdots + \Cr(2d_n+4) + \Bg(N)$. 



Note that \cref{thm:allyourbase} certainly does not give the smallest graph satisfying the desired properties. For example, taking the join of four crown graphs $\Cr(8)$ gives a 32-vertex graph $\G$ whose $\Rg$ has four elementary divisors equal to 2, but \cref{thm:span-of-CVs} says the Kneser graph $\Kn(6, 2)$ on just 15 vertices already has four elementary divisors of $\Rg$ divisible by 2, and the computations in \cref{tab:KneserElDiv} showed that indeed they are all exactly 2 and there are no more.

\section{Further insights from data}
\label{sec:data}
We collect here some information about small connected graphs; see \cref{tab:8-vertex-RA}. Graphs of girth 5 or more are always RA by \cref{cor:girth-5}, as are graphs on 7 or fewer vertices, as discussed in \cite{FirstPaper}. Graphs of girth 4 are always neighborhood-distinguishable and may be RA or $1/k$-RA for some $k$. Graphs of girth 3 may have vertices with identical neighborhoods, and if not, may be RA or not. Looking at connected graphs with up to 10 vertices, the proportion of graphs that are neighborhood-distinguishable rapidly increases, from about $67\%$ of 8-vertex graphs to about $84\%$ of 10-vertex graphs. The vast majority of small neighborhood-distinguishable graphs are RA, with about $99.9997\%$ of neighborhood-distinguishable graphs on 10 vertices being RA.

The data suggest that being RA is extremely common. However, it is possible that this is just an artifact of dealing with small graphs. In fact, the proportion of graphs on $n$ vertices that are girth $3$ tends to 1, and girth 3 is precisely where there seems to be plenty of opportunity to find obstructions to being RA. We leave the question of the proportion of graphs that are RA as an open problem - see \cref{prob:prop-nonRA}.

\begin{table}[htbp]
\centering
  \begin{tabular}{@{} l l r r r @{}}
    \toprule
    Girth      & Category                                  & 8 vertices & 9 vertices & 10 vertices \\ 
    \midrule
    \multirow{3}{*}{3}
               & Nbhd-Indist.            & 3675 & 63308 &  1908362\\
               & Nbhd-Dist. RA           & 7175 & 196389 & 9798347 \\
               & Nbhd-Dist. not RA       & 0 & 3 & 30   \\
    \addlinespace
    \multirow{2}{*}{4}
               & RA                                        & 219 & 1243 & 9367 \\
               & not RA                                    & 1 & 0 & 1   \\
    \addlinespace
    5+    & (all RA)                                  & 47 & 137 & 464 \\
    \midrule
    \multicolumn{2}{r}{Total}                                   & 11117 & 261080 & 11716571 \\
    \bottomrule
  \end{tabular}
  \caption{Counts of connected graphs on a fixed number of vertices by girth, category, and RA status.}
  \label{tab:8-vertex-RA}
\end{table}


\section{Conclusion and Open Problems}

We have fairly comprehensively explored the possibilities for the elementary divisors of the RA matrix $C_\G$ of a graph $\G$. Here we collect a variety of open problems for future research.

In our explorations of products of graphs, we were able to characterize the elementary divisors of the RA matrix of strong products and cartesian products. For tensor products of graphs, our characterization is complete except for the tensor product of graphs of girth 3.

\begin{problem}
    \label{prob:nonbip-tensor}
    Describe the elementary divisors of $C_\G$ for the tensor product $\G$ of two 
    graphs of girth 3.
\end{problem}

In \cref{thm:crown-graph-smallest}, we showed that the crown graphs yield the smallest $1/k$-RA graphs of girth 4, and \cref{prop:pyramid} showed that the pyramids over crown graphs are $1/k$-RA graphs of girth 3. It is not clear whether the pyramids over crown graphs are always among the minimal girth 3 $1/k$-RA graphs.

\begin{problem}
\label{prob:min-girth3-1/k}
For each $k \geq 2$, determine the smallest graph(s) of girth 3 that are $1/k$-RA.
\end{problem}

The data in \cref{tab:8-vertex-RA}, though limited in scope, suggests that almost every graph is RA. Furthermore, the examples of non-RA graphs we have found so far have been highly structured, providing further evidence that a random graph is almost surely RA.

\begin{problem}
\label{prob:prop-nonRA}
Does the proportion of $n$-vertex graphs that are RA tend toward $1$ as $n \to \infty$?
\end{problem}

When $\G$ is RA, the structure of $G^\G$ essentially boils down to the structure of $(G^\Ab)^\G$ (which itself can be obtained from $\Z^\G$). Explicitly, in this case, the full preimage of each element of $(G^\Ab)^\G$ is contained in $G^\G$. If $\G$ is not RA, then this is no longer the case, though perhaps a similar description of $G^\G$ is possible if $C_\G$ has few elementary divisors. Since we have shown that many graphs are $1/\mu$-RA for some $\mu$, it would be useful to more concretely describe $G^\G$ for such graphs.

\begin{problem}
Describe $G^\G$ in terms of $(G^\Ab)^\G$ in the case where $\G$ is $1/\mu$-RA.
\end{problem}

\cref{thm:allyourbase} shows a simple construction for building graphs where $C_\G$ has any desired nullity and any sequence of nontrivial elementary divisors. Knowing the elementary divisors of $C_\G$ tells us everything we need to know about $[G,G]^{|\G|} \cap G^\G$ if $G$ is a Heisenberg group $H(\F_p)$. For other groups $G$, however, the picture is much more complicated. Recall the chain of subgroups
\[\Comm(G, \G) \le [G^\G, G^\G] \le [G, G]^{|\G|} \cap G^{\G} \le [G, G]^{|\G|}\]
where $\Comm(G, \G)$ is generated by commutators $[g^v, h^w]$ for $g,h \in G$ and vertices $v, w$ in $\G$ (possibly $v = w$).
The group we are most interested in is $[G, G]^{|\G|} \cap G^\G$, so that we can use the short exact sequence
\[1 \to [G, G]^{|\G|} \cap G^\G \to G^\G \to (G/[G, G])^{\G} \to 1\]
to understand $G^\G$. What the RA matrix $C_\G$ tells us is the structure of $\Comm(G,\G)$; in fact, $\Comm(G, \G) = [G, G]^{C_\G}$. Thus, understanding $\Comm(G,\G)$ from the elementary divisors of $C_\G$ is a special case of understanding $G^M$ from the elementary divisors of an arbitrary integer matrix $M$. Certainly the elementary divisors do not tell the whole story; for example, if $G = D_8$, $M_1 = \mat1004$, and $M_2 = \mat1204$, then $M_1$ and $M_2$ have the same elementary divisors, but $G^{M_1} \cong D_8$ while $G^{M_2} \cong C_2 \times D_8$. 

\begin{problem}
Determine to what extent we can describe $G^M$ knowing only the elementary divisors of $M$.
\end{problem}





For Heisenberg groups, it turns out that $\Comm(G,\G) = [G^\G, G^\G] = [G,G]^{|\G|} \cap G^\G$ (see \cite[Theorem 6.10]{FirstPaper}). More generally, any group $G$ with a central commutator subgroup and with \emph{faithful abelian generators} (see \cite[Definition 6.7]{FirstPaper}) will also satisfy this property. What other groups have this property? Is there a family of graphs that can be used to determine whether a group has this property?

\begin{problem}
Characterize groups $G$ such that $\Comm(G,\G) = [G, G]^{|\G|} \cap G^\G$ for all graphs $\G$. 
\end{problem}

\begin{problem}
Is there a family of graphs $\mathcal F$ such that, for a fixed group $G$, $\Comm(G,\G) = [G,G]^{|\G|} \cap G^\G$ for all graphs $\G$ if and only if $\Comm(G,\Lambda) = [G,G]^{|\Lambda|} \cap G^\La$ for some $\La \in \mathcal F$ (or for all $\La \in \mathcal F$)?
\end{problem}

Attacking the same problem from a different direction, if we fix a graph $\G$ such that $C_\G$ has many nontrivial elementary divisors, can we find a group $G$ that makes the gap between $\Comm(G, \G)$ and $[G,G]^{|\G|} \cap G^\G$ as large as possible?

\begin{problem}
For each graph $\G$ such that $C_\G$ has $k$ nontrivial elementary divisors, is it possible to find a group $G$ such that $\Comm(G, \G) \cong [G,G]^{|\G|-k}$ but $\G$ is $G$-RA (that is, $[G,G]^{|\G|} \leq G^\G$)?    
\end{problem}

We have seen that the Heisenberg group $H(\F_p)$ acts as a witness of the number of elementary divisors of $C_\G$ that are divisible by $p$. Is there a group that would serve this same function for prime powers?

\begin{problem}
For each prime power $p^k$, is there a group $G$ whose commutator subgroup is cyclic of order $p^k$ and such that $\Comm(G,\G) = [G, G]^{|\G|} \cap G^\G$ for every $\G$?
\end{problem}



We plan to explore these and related problems in subsequent papers.


\section{Acknowledgments}

The authors used SageMath \cite{sage} and GAP \cite{GAP} for testing hypotheses, and House of Graphs \cite{HouseOfGraphs} for visualizing and searching for graphs. They also had several helpful conversations with ChatGPT. The code used is available upon request.

\bibliographystyle{amsplain}
\bibliography{refs}
\end{document}